\newcommand{\R}{\mathbb{R}}
\newcommand{\grad}{\nabla}
\newcommand{\lap}{\Delta}
\newcommand{\normL}[2][\cdot]{\ensuremath{\|{#1}\|_{L^{#2}(\R^3)}}}
\theoremstyle{plain}
\newtheorem{thm}{Theorem}[section]
\newtheorem{lemma}[thm]{Lemma}
\newtheorem{defn}[thm]{Definition}
\newtheorem{cor}[thm]{Corollary}
\numberwithin{equation}{section}
\newtheorem{rem}[thm]{Remark}
\begin{document}
\title[Liquid Crystal Flow in $L^3_{\hbox{uloc}}(\R^3)$]{Well-Posedness of Nematic Liquid Crystal Flow in $L^3_{\hbox{uloc}}(\R^3)$} 

\author[J. Hineman]{Jay Lawrence Hineman\ \ \ } 
\author[C. Wang]{\ \ \ Changyou Wang} 
\address{Department of Mathematics\\
University of Kentucky \\
Lexington, KY 40506}

\email{hineman@ms.uky.edu, \ cywang@ms.uky.edu} 
\date{\today}
\keywords{Hydrodynamic flow, nematic liquid crystal, suitable weak solution,
well-posedness} 
\subjclass[2000]{}

\begin{abstract}
In this paper, we establish the local well-posedness for the Cauchy problem of the simplified version of hydrodynamic flow of nematic
liquid crystals (\ref{LLF}) in $\mathbb R^3$ for any initial data $(u_0,d_0)$ 
having small $L^3_{\hbox{uloc}}$-norm of $(u_0,\nabla d_0)$. Here $L^3_{\hbox{uloc}}(\mathbb R^3)$ is
the space of uniformly locally $L^3$-integrable functions. For any initial data $(u_0, d_0)$ with small $\displaystyle
\|(u_0,\nabla d_0)\|_{L^3(\mathbb R^3)}$, we show that there exists a unique,
global solution to (\ref{LLF}) which is smooth for $t>0$ and has monotone deceasing $L^3$-energy  for $t\ge 0$.
\end{abstract}
\maketitle 

\section{Introduction}
\setcounter{equation}{0}
\setcounter{thm}{0}
In this paper, we consider the Cauchy problem for the following hydrodynamic system modeling the flow of nematic liquid crystal materials in $\mathbb R^3$:
for $0<T\le\infty$ and $(u,P,d):\R^3\times[0,T) \to \R^3\times\R\times S^2$, the system is given by
\begin{equation}\label{LLF}
\begin{cases}
\begin{aligned}
u_t + u \cdot \grad u - \nu\lap u + \grad P &= -\lambda\grad\cdot(\grad d \odot \grad d), \ \ {\rm{in}}\ \mathbb R^3\times (0,T),\\
\grad \cdot u &= 0, \ \ \ \ \ \ \ \ \ \  \ \ \ \ \ \ \ \ \ \ \ \ \ {\rm{in}}\ \mathbb R^3\times (0,T),\\
d_t + u \cdot \grad d &=\gamma(\lap d+ |\grad d|^2 d), \ \ \ \ {\rm{in}}\ \mathbb R^3\times (0,T),\\
(u, d)&=(u_0, d_0), \qquad\qquad \ \ \ {\rm{on}}\ \mathbb R^3\times \{0\},
\end{aligned}
\end{cases}
\end{equation}
for a given initial data $(u_0,d_0):\mathbb R^3\to\mathbb R^3\times S^2$ with $\nabla\cdot u_0=0$.
Here $u:\mathbb R^3\to\mathbb R^3$ represents the velocity field of the fluid, $d:\mathbb R^3\to S^2$ -- the unit sphere
in $\mathbb R^3$ -- is a unit vector field representing the macroscopic molecular orientation of the nematic liquid crystal
material, $P:\mathbb R^3\to\R$ represents the pressure function. The constants $\nu,\lambda,$ and $\gamma$ are positive
constants that represent the viscosity of the fluid, the competition between kinetic and potential energy, and the microscopic
elastic relaxation time for the molecular orientation field.
$\nabla\cdot$ denotes the divergence operator in $\mathbb R^3$, and $ \grad d \odot \grad d$ denotes the
symmetric $3\times 3$ matrix:
\begin{equation*}
\label{def:odot}
\left ( \grad d \odot \grad d \right )_{ij} = \langle\grad_id,  \grad_j d\rangle, \ 1\le i, j \le 3.
\end{equation*}
Throughout this paper, we denote $\displaystyle\langle v, w\rangle$ or $v\cdot w$
as the inner product in $\mathbb R^3$ for $v,w\in\mathbb R^3$.

The system (\ref{LLF}) is a simplified version of the famous Ericksen-Leslie model for the hydrodynamics of nematic liquid crystals developed by Ericksen
and Leslie during the period of 1958 through 1968 \cite{ericksen, leslie, degenes}. This system reduces to the Ossen-Frank
model in the static theory of liquid crystals. It is a macroscopic continuum description of the time evolution of the materials
under the influence of flow field $u$ and the macroscopic description of the microscopic orientation field $d$ of
rod-like liquid crystals. The current form of system (\ref{LLF}) was first proposed by Lin \cite{lin} back in the late 1980's. From the mathematical point
of view, (\ref{LLF}) is a system coupling the non-homogeneous incompressible Navier-Stokes equation and the
transported heat flow of harmonic maps to $S^2$. Lin-Liu \cite{LL1, LL2} initiated the mathematical analysis of (\ref{LLF}) by
considering its Ginzburg-Landau approximation or the so-called orientation with variable degrees in the terminology of Ericksen.
Namely, the Dirichlet energy $\displaystyle\int \frac12|\nabla d|^2$ for $d:\mathbb R^3\to S^2$ is replaced by  the Ginzburg-Landau
energy $\displaystyle\int \frac12|\nabla d|^2+\frac{1}{4\epsilon^2}(1-|d|^2)^2$ ($\epsilon>0$) for $d:\mathbb R^3\to\mathbb R^3$.
Hence (\ref{LLF})$_3$ is replaced by
\begin{equation}\label{GL_LLF}
\partial _t d+u\cdot\nabla d= \gamma(\Delta d+\frac{1}{\epsilon^2} (1-|d|^2)d).
\end{equation}
Lin-Liu proved in \cite{LL1, LL2} (i) the existence of a unique, global smooth solution in dimension two and in dimension three under large
viscosity $\nu$; and (ii) the existence of suitable weak solutions and their partial regularity in dimension three, analogous to the
celebrated regularity theorem by Caffarelli-Kohn-Nirenberg \cite{CKN} for the three-dimensional incompressible Navier-Stokes equation.

As already pointed out by \cite{LL1, LL2}, it is a very challenging problem to study the convergence of solutions $(u_\epsilon, P_\epsilon, d_\epsilon)$ to (\ref{LLF})$_1$-(\ref{LLF})$_2$-(\ref{GL_LLF}) when $\epsilon\downarrow 0$. In particular, the existence of global Leray-Hopf type  weak solutions to the initial and boundary value problem of (\ref{LLF}) has only been established recently by Lin-Lin-Wang \cite{LLW} in dimension two, see also Hong \cite{hong} and Xu-Zhang \cite{XZ} and Hong-Xin \cite{HX} for related works.

Because of the super-critical nonlinear term $\nabla\cdot(\nabla d\odot\nabla)$ in (\ref{LLF})$_1$,
it has been an outstanding open problem whether there exists a global  Leray-Hopf type weak solution to (\ref{LLF}) in $\mathbb R^3$ for any initial data $(u_0, d_0)\in L^2(\R^3,\R^3)\times \dot{W}^{1,2}(\R^3, S^2)$ with $\nabla\cdot u_0=0$.
It is standard that in $\mathbb R^3$ the local existence of a unique, strong solution to (\ref{LLF}) can be obtained for any initial data
$u_0\in W^{s,2}(\mathbb R^3)$ and $d_0\in W^{s+1,2}(\mathbb R^3, S^2)$ for $s>3$ with $\nabla\cdot u_0=0$, see for example \cite{wen-ding}.
A blow-up criterion for local strong solutions to
(\ref{LLF}), similar to the Beale-Kato-Majda criterion for the Navier-Stokes equation (see \cite{BKM}), was obtained by Huang-Wang
\cite{HW1}. For small initial data in certain Besov spaces, Li-Wang \cite{li-wang} obtained the global existence of strong solutions
to (\ref{LLF}).  We would like to mention that  Wang \cite{wang} has recently obtained the global (or local) well-posedness of
(\ref{LLF}) for initial data $(u_0, d_0)$ belonging to possibly the largest space ${\rm{BMO}}^{-1}\times {\rm{BMO}}$ with $\nabla\cdot u_0=0$, which is a invariant space under parabolic scaling associated with (\ref{LLF}),  with small norms.

In this paper, we are mainly interested in the local well-posedness of (\ref{LLF}) for any initial data $(u_0, d_0)$ such that
$(u_0,\nabla d_0)\in L^3_{\rm{uloc}}(\R^3)$. Henceforth $L^3_{\hbox{uloc}}(\R^3)$  denotes
the space of uniformly locally $L^3$-integrable functions.  It turns out that $L^3_{\hbox{uloc}}(\mathbb R^3)$ is also
invariant under parabolic scaling associated with
(\ref{LLF}).

Now we give the definition of $L^3_{\hbox{uloc}}(\mathbb R^3)$.  The readers can consult
the monograph by Lemari\'e-Rieusset \cite{LR}  for applications of the space $L^3_{\hbox{uloc}}(\mathbb R^3)$
to the Navier-Stokes equation.

\begin{defn}\label{l3space} A function $f\in L^3_{\rm{loc}}(\mathbb R^3)$ belongs to the space
$L^3_{\rm{uloc}}(\mathbb R^3)$ consisting of uniformly locally $L^3$-integrable functions, if there exists $0<R<+\infty$ such that
\begin{equation}\label{l3space_norm}
\left\|f\right\|_{L^3_R(\mathbb R^3)}
:=\sup_{x\in\mathbb R^3}\Big (\int_{B_R(x)}|f|^3\Big)^\frac13<+\infty.
\end{equation}
\end{defn}

\smallskip
It is clear that
\begin{itemize}
\item $L^3(\mathbb R^3)\subset L^3_{\hbox{uloc}}(\mathbb R^3)$.\\
\item If $f\in L^3_{\hbox{uloc}}(\mathbb R^3)$,
then $\displaystyle \|f\|_{L^3_R(\mathbb R^3)}$ is finite for any $0<R<+\infty$. For any two $0<R_1\le R_2<\infty$,
it holds
\begin{equation}\label{equiv_norm}
\left\|f\right\|_{L^3_{R_1}(\mathbb R^3)}
\le \left\|f\right\|_{L^3_{R_2}(\mathbb R^3)} \lesssim \left(\frac{R_2}{R_1}\right)\left\|f\right\|_{L^3_{R_1}(\mathbb R^3)},
\ \forall \ f\in L^3_{\hbox{uloc}}(\mathbb R^3).
\end{equation}
\item $\displaystyle L^3_{\hbox{uloc}}(\mathbb R^3)\subset \bigcap_{0<R<\infty} {\rm{BMO}}^{-1}_R(\mathbb R^3)$ (see \cite{koch-tataru} or \cite{wang}). Moreover, for any
$0<R<\infty$, it holds
\begin{equation}\label{inclusion}
\left[f\right]_{\rm{BMO}^{-1}_R(\mathbb R^3)}\lesssim \left\|f\right\|_{L^3_{R}(\mathbb R^3)},
\ \forall \ f\in L^3_{\hbox{uloc}}(\mathbb R^3).
\end{equation}
\end{itemize}

\medskip
Throughout this paper, we write $A\lesssim B$ if there exists a universal constant $C>0$ such that $A\le CB$.
Here are a few more notations and conventions that we will use through this paper. For two matrices $M, N$ of order $3$, we use
$\displaystyle M:N=\sum_{1\le i, j\le 3} M^{ij} N^{ij}$ to denote their scalar product.  For two vectors $u, v\in\mathbb R^3$,
we let $u\otimes v$ denote  their tensor product: $\displaystyle (u\otimes v)_{ij}=u^i v^j, \ 1\le i,  j\le 3$.
For $0<s<+\infty$ and $1\le p\le\infty$, we denote by $W^{s,p}(\mathbb R^3)$ and $\dot{W}^{s,p}(\mathbb R^3$
as the Sobolev space and the homogeneous Sobolev spaces respectively.
For $0\le a<b<+\infty$, denote
$$C_b^\infty(\mathbb R^3\times [a,b])=\bigcap_{m\ge 0}\Big\{
f\in C^m(\mathbb R^3\times [a,b]) \ :\ \|f\|_{C^m(\R^3\times [a,b])}<+\infty\Big\},$$
$$L^\infty([a,b], L^3_{\hbox{uloc}}(\R^3))
=\Big\{f\in L^\infty([a,b], L^3_{1}(\R^3))\Big\},$$
and 
$$C_*^0([a,b], L^3_{\rm{uloc}}(\R^3))
=\Big\{f\in C((a,b], L^3_1(\R^3))\cap L^\infty([a,b], L^3_1(\R^3)): \
 \hbox{as}\ t\downarrow 0,
f(t) \rightarrow f(a) \hbox{ in } L^3_{\rm{loc}}(\R^3)\Big\}.
$$
Repeated indices are summed unless specificized otherwise. Upper indices denote components and lower indices denote derivatives.

\bigskip

Now we state our main theorem.

\begin{thm}
  \label{wellposedness}
  There exist $\epsilon_0 > 0$ and $\tau_0 > 0$ such that if $u_0:\R^3 \to \R^3$, with $\nabla\cdot u_0=0$, and $d_0:\R^3 \to S^2$ satisfies
$(d_0-e_0)\in L^3(\R^3)$ for some $e_0\in S^2$, and
  \begin{equation}\label{small_condition}
   ||| (u_0, \nabla d_0)|||_{L^3_R(R^3)}:
=\sup_{x\in\R^3} \left(\int_{B_R(x)} |u_0|^3 +|\nabla d_0|^3\right)^\frac13 \leq \epsilon_0
  \end{equation}
for some $0<R<\infty$,
  then there exist $T_0\ge \tau_0 R^2$ and a unique solution $(u, d):\R^3 \times [0,T_0) \to \R^3 \times\R\times S^2$ of (\ref{LLF}) such that
the following properties hold:\\
(i) For $t\downarrow 0$, 
$\displaystyle (u(t), d(t))\rightarrow (u_0, d_0)$ and $\nabla d(t)\rightarrow \nabla d_0$
in $L^3_{\rm{loc}}(\R^3)$. \\
(ii) $$(u,d) \in \bigcap_{0<\delta<T_0} C^\infty_b(\R^3\times [\delta,T_0-\delta], \mathbb R^3\times S^2), \
\ (u,\nabla d)\in\bigcap_{0<T'<T_0} C_*^0([0, T'], L^3_{\rm{uloc}}(\mathbb R^3)).$$
(iii) \begin{equation}\label{small_condition0}
|||(u(t),\nabla d(t))|||_{L^\infty([0,\tau_0 R^2], L^3_R(\R^3))}\le C\epsilon_0.
\end{equation}
(iv) If $T_0 < +\infty$ is the maximum time interval then it must hold
  \begin{equation}\label{blowup_criterion}
    \limsup_{t \uparrow T_0} |||(u(t), \nabla d(t)) |||_{L_{r}^3(\R^3)}> \epsilon_0,
\ \forall \ 0<r<\infty.
  \end{equation}
\end{thm}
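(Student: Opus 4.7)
The plan is to rewrite (\ref{LLF}) as an integral equation via Duhamel's principle and solve it by Picard iteration in a Kato-type space adapted to $L^3_{\rm uloc}(\R^3)$. The parabolic scaling $u_R(x,t)=R\,u(Rx,R^2t)$, $d_R(x,t)=d(Rx,R^2t)$, $P_R(x,t)=R^2P(Rx,R^2t)$ preserves (\ref{LLF}) and maps $|||(u_0,\grad d_0)|||_{L^3_R(\R^3)}$ to $|||(u_{0,R},\grad d_{0,R})|||_{L^3_1(\R^3)}$, reducing the problem to $R=1$ where I must produce a unique solution on $[0,\tau_0]$. Applying the Leray projector $P_L$ to (\ref{LLF})$_1$ yields the coupled Duhamel formulation
\begin{align*}
u(t) &= e^{\nu t\lap}u_0 - \int_0^t e^{\nu(t-s)\lap} P_L\,\grad\!\cdot\!\bigl(u\otimes u+\lambda\,\grad d\odot \grad d\bigr)(s)\,ds,\\
d(t) &= e^{\gamma t\lap}d_0 + \int_0^t e^{\gamma(t-s)\lap}\bigl(-u\cdot\grad d + \gamma|\grad d|^2d\bigr)(s)\,ds.
\end{align*}

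\textbf{Picard iteration on a time-weighted uniformly-local space.} Fixing $p>3$ and setting $\alpha=\tfrac{3}{2}(\tfrac{1}{3}-\tfrac{1}{p})$, I would work in
\[
X_T = \Bigl\{(u,d):\sup_{0<t<T}|||(u,\grad d)|||_{L^3_1(\R^3)} + \sup_{0<t<T} t^\alpha|||(u,\grad d)|||_{L^p_1(\R^3)}\le M\epsilon_0\Bigr\}
\]
and seek a fixed point by contraction. The main analytic ingredient is the family of uniformly-local heat estimates
\[
|||e^{t\lap}f|||_{L^q_1(\R^3)}\lesssim \bigl(t^{-\frac{3}{2}(\frac{1}{r}-\frac{1}{q})}+1\bigr)|||f|||_{L^r_1(\R^3)},\qquad 1\le r\le q\le\infty,\ 0<t\le 1,
\]
and analogous bounds for $\grad e^{t\lap}$ and for the Oseen-type operator $e^{t\lap}P_L\grad\cdot$, all derived by decomposing the kernel into concentric annular pieces around each base ball and exploiting Gaussian decay. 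The quadratic nonlinearities $u\otimes u$, $\grad d\odot\grad d$, $u\cdot\grad d$, and $|\grad d|^2d$ (using $|d|=1$) are controlled in $B_1(x_0)$ by H\"older as products of two $L^p_1$ factors; the resulting time singularity $s^{-2\alpha}$ is integrable for $p$ sufficiently close to $3$. This yields a self-map and strict contraction on $X_{\tau_0}$ for small enough $\epsilon_0$, establishing existence, uniqueness, and estimate (iii).

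\textbf{Regularity, initial continuity, and blow-up criterion.} For (i), the $L^3_{\rm loc}$-continuity of $d(t)$ at $t=0$ uses $d_0-e_0\in L^3(\R^3)$ so that $e^{t\lap}(d_0-e_0)\to d_0-e_0$ in $L^3$, while the Duhamel integrals vanish in $L^3_{\rm loc}$ as $t\downarrow 0$ by the bounds above. For (ii), the control of $u$ in $L^p_{\rm uloc}$ with $p>3$ puts $u$ into a subcritical Serrin class on compact subsets, and similarly $\grad d$ into a subcritical Morrey class, so standard parabolic bootstrap promotes $(u,d)$ to $C^\infty_b(\R^3\times[\delta,\tau_0-\delta])$ for every $\delta>0$; the $C^0_*$ statement follows directly from the $X_T$-bound and semigroup continuity on the localized piece. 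For (iv) I would argue by contradiction: if $T_0<+\infty$ were the maximal existence time but $|||(u(t),\grad d(t))|||_{L^3_r(\R^3)}\le\epsilon_0$ held for some $0<r<\infty$ and all $t$ close to $T_0$, then applying the local existence step at a restart time $t_0$ with $T_0-t_0<\tau_0 r^2$ would extend the solution past $T_0$, contradicting maximality.

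\textbf{Main obstacle.} The central technical difficulty is that the Leray projector and the Riesz transforms are \emph{not} bounded on $L^p_{\rm uloc}(\R^3)$; consequently the pressure-gradient piece $e^{t\lap}P_L\grad\cdot(\cdots)$ in the Duhamel formula has to be estimated ball-by-ball, using pointwise Gaussian decay of the underlying singular kernel to recover the missing boundedness locally. Making this precise so that the constants in the bilinear estimates are uniform in the base point $x_0\in\R^3$, and simultaneously handling $|\grad d|^2 d$ and the transport term $u\cdot\grad d$ in the same uniformly-local norm, is the technical heart of the argument.
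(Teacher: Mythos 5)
Your proposal takes a genuinely different route from the paper. The paper does \emph{not} solve the Cauchy problem by a fixed-point argument in a uniformly-local Kato space. Instead it (a) approximates the rough data $(u_0,d_0)$ by smooth data $(u_0^k,d_0^k)$ (Lemma \ref{approx}), (b) invokes a known local-existence result for smooth data to get smooth solutions on $[0,T_k]$, (c) uses the \emph{local $L^3$-energy inequality} (Lemma \ref{local_ineq} with the commutator-type pressure estimate of Lemma \ref{L3:pressure:lemma}) to prove that $T_k\ge\tau_0>0$ uniformly in $k$ and that the local $L^3$-norm stays small, (d) applies the $\epsilon_0$-regularity theorem (Theorem \ref{smooth:solutions}, proved via a blow-up/compactness decay lemma and Riesz potential estimates between parabolic Morrey spaces) to get uniform interior $C^m$ bounds, and (e) passes to the limit. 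Only the \emph{uniqueness} step (Step 6) uses a Duhamel/Kato-space argument, in the spaces $\mathbf X_{\tau_0},\mathbf Y_{\tau_0},\mathbf Z_{\tau_0}$ borrowed from \cite{wang}, and only after smoothness and the uniform $L^3_{\rm uloc}$ bound are already established. Your proposal instead attempts a direct Picard iteration for existence, which would eliminate the need for the approximation lemma, the local energy inequality and the $\epsilon_0$-regularity compactness machinery.

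That said, as written your proposal has genuine gaps. First, the ``uniformly-local heat estimates'' and especially the bound for the Oseen-type operator $e^{t\lap}\mathbb P\grad\cdot$ on $L^q_1(\R^3)$ are not given, and you correctly flag this as the technical heart; but this is precisely where the paper spends most of its effort (in a completely different way, via the commutator estimate $[\phi,\mathbf R_j\mathbf R_k]$ in Lemma \ref{L3:pressure:lemma} and the annular-decomposition estimate of $II(x)$). Asserting the family of semigroup estimates without proof does not close the argument, since the failure of boundedness of $\mathbb P$ on $L^p_{\rm uloc}$ is exactly what makes this problem harder than the whole-space $L^3$ case. Second, and more substantively, the Picard iteration for the director equation does not obviously preserve the target constraint $|d|=1$: the iterates $d_{n+1}=e^{\gamma t\lap}d_0+\int_0^t e^{\gamma(t-s)\lap}(-u_n\cdot\grad d_n+\gamma|\grad d_n|^2 d_n)\,ds$ are $\R^3$-valued, not $S^2$-valued, so the identity $|\grad d|^2 d\cdot d=|\grad d|^2$ (used pervasively to make the system parabolic) is not available along the iteration. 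One has to show a posteriori that $|d|^2-1$ solves a suitable linear drift--diffusion equation with zero data and hence vanishes; the paper sidesteps this entirely by working with smooth $S^2$-valued approximate solutions and by carefully projecting the mollified data back onto $S^2$ in Lemma \ref{approx}. Third, the bootstrap from ``subcritical Serrin/Morrey class'' to $C^\infty_b$ is plausible but again not the paper's route: Theorem \ref{smooth:solutions} gives this via a genuinely different iteration (a quantitative decay lemma proved by contradiction and compactness, followed by Riesz potential estimates on parabolic Morrey spaces). Your sketch would need to supply these details, and the regularity constants must be made uniform in the base point, just as you observe. In short: the strategy is a legitimate alternative, closer in spirit to Koch--Tataru/Lemari\'e-Rieusset than to the paper's suitable-weak-solution framework, but the two hardest steps (uniformly-local Oseen bounds, sphere-constraint preservation) are not actually carried out.
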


The ideas to prove Theorem \ref{wellposedness} are motivated by those employed by \cite{LLW}.
There are five main ingredients, which include
\begin{itemize}
\item approximate  $(u_0,d_0)$ by smooth $(u_0^k, d_0^k)$ (see Lemma \ref{approx} below)
and obtain $0<T_k<+\infty$ and a sequence of smooth solutions $(u^k, P^k, d^k)$
of (\ref{LLF}) in $\mathbb R^3\times [0, T_k]$, under the initial data $(u_0^k, d_0^k)$;
\item utilizing the local $L^3$-energy inequality (\ref{localEnergyInequality0}), obtain uniform lower bounds of $T_k$;
\item apply the $\epsilon_0$-regularity Theorem \ref{smooth:solutions} to 
obtain a priori  derivative estimates of $(u^k, d^k)$ and then take limit to obtain the local existence of
$L^3_{\rm{uloc}}$-solutions
to (\ref{LLF}); 
\item apply Theorem \ref{smooth:solutions} again to characterize the finite maximal time
interval; and
\item adapt the proof of \cite{wang} to show the uniqueness.
\end{itemize}

\medskip
For a solution $(u, P, d)$ to (\ref{LLF}),  denote
its $L^3$-energy by
$$E_3(u,\nabla d)(t)=\int_{\mathbb R^3}(|u(t)|^3+|\nabla d(t)|^3), \ \ t\ge 0.$$
Concerning the global well-posedness of (\ref{LLF}), we have 

\begin{thm}\label{gwp}
  There exists an $\epsilon_0 > 0$ such that if $(u_0,d_0) \in L^3(\R^3,\R^3) \times \dot{W}^{1,3}(\R^3,S^2)$,
with $\nabla\cdot u_0=0$,  satisfies
  \begin{equation}\label{small_l3}
    E_3(u_0,\nabla d_0)  \leq \epsilon_0^3,
  \end{equation}
  then there exists a unique global solution $(u,d):\mathbb R^3\times [0,\infty)
\to \mathbb R^3\times\mathbb R\times S^2$  of (\ref{LLF}) such that
$\displaystyle (u, d) \in C^\infty(\mathbb R^3\times (0,+\infty))\cap C([0,\infty), L^3(\R^3)\times \dot{W}^{1,3}(\R^3))$,
$E_3(u,\nabla d)(t)$ is monotone decreasing for $t\ge 0$, and
\begin{equation}\label{uniform_gradient_estimate}
\left\|\nabla^m u(t)\right\|_{L^\infty(\R^3)}+\left\|\nabla^{m+1} d(t)\right\|_{L^\infty(\R^3)}\le \frac{C\epsilon_0}{t^{\frac{m}2}},
\ \ \forall\ t>0, \ m\ge 0.
\end{equation}
\end{thm}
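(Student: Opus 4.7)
The plan is to combine the local existence theory of Theorem~\ref{wellposedness} with an \emph{a priori} monotonicity estimate for the $L^3$-energy $E_3$. Since $L^3(\R^3)\subset L^3_{\hbox{uloc}}(\R^3)$ with $\|f\|_{L^3_R(\R^3)}\le \|f\|_{L^3(\R^3)}$ for every $R>0$, the hypothesis $E_3(u_0,\nabla d_0)\le \epsilon_0^3$ yields $|||(u_0,\nabla d_0)|||_{L^3_R(\R^3)}\le \epsilon_0$ simultaneously for every $R>0$. Applying Theorem~\ref{wellposedness} for each such $R$ and invoking uniqueness, the solution extends to $[0,\tau_0 R^2)$ for every $R$, so its maximal lifespan $T_0$ satisfies $T_0\ge\tau_0 R^2$ for all $R>0$, which forces $T_0=+\infty$ at once. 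Thus global existence in $L^3_{\hbox{uloc}}$ is immediate; the substantive work is to show that the solution remains in the smaller class $L^3(\R^3)\times \dot{W}^{1,3}(\R^3,S^2)$ with monotone decreasing energy.

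For this I would establish the differential inequality
\begin{equation*}
\dfull{t}E_3(u,\nabla d)(t)+c_0\int_{\R^3}\bigl(|u||\grad u|^2+|\grad d||\grad^2 d|^2\bigr)\le 0,
\end{equation*}
valid for smooth solutions as long as $E_3(u,\nabla d)(t)\le \epsilon_0^3$. The derivation proceeds by multiplying $(\ref{LLF})_1$ by $|u|u$; the divergence-free condition kills the convective contribution $(u\cdot\grad u)\cdot |u|u$, and integrating $\nu\lap u$ by parts produces the good dissipation $\nu\int |u||\grad u|^2+\nu\int |u||\grad|u||^2$. Similarly, taking the gradient of $(\ref{LLF})_3$ and testing against $|\grad d|\grad d$, while using $|d|\equiv 1$ (so that $d\cdot\grad d=0$) to handle the geometric term $|\grad d|^2 d$, produces the director dissipation $\gamma\int |\grad d||\grad^2 d|^2$.

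The remaining contributions---the pressure term $-\int P\,u\cdot\grad|u|$, the coupling $\lambda\int\grad(|u|u):(\grad d\odot\grad d)$ arising from integration by parts on the right-hand side of $(\ref{LLF})_1$, and analogous cross terms from the director equation---must each be bounded by a constant multiple of $E_3^{1/3}$ times the dissipation. The pressure is controlled through the elliptic identity $-\lap P=\dpart{i}\dpart{j}(u^i u^j+\lambda\,\dpart{i}d\cdot \dpart{j}d)$ and the Calder\'on--Zygmund bound $\|P\|_{L^{3/2}(\R^3)}\lesssim\|u\|_{L^3(\R^3)}^2+\|\grad d\|_{L^3(\R^3)}^2$; the remaining quartic-in-gradients terms are handled by Gagliardo--Nirenberg inequalities applied to the auxiliary functions $|u|^{3/2}$ and $|\grad d|^{3/2}$, whose $H^1$-seminorms satisfy $|\grad|u|^{3/2}|^2\lesssim |u||\grad u|^2$ and $|\grad|\grad d|^{3/2}|^2\lesssim |\grad d||\grad^2 d|^2$. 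Checking that every cross term reduces to the scaling-critical shape $E_3^{1/3}\cdot(\hbox{dissipation})$ with absorbing constants small enough is the main technical obstacle, and is exactly where the $L^3$-criticality of $(\ref{LLF})$ is used in an essential way.

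Once the monotonicity inequality is in place, applying it to smooth approximations $(u^k,d^k)$ with $(u_0^k,d_0^k)\to(u_0,d_0)$ in $L^3\times\dot{W}^{1,3}$ and passing to the limit gives $E_3(u,\nabla d)(t)\le E_3(u_0,\nabla d_0)\le \epsilon_0^3$ for all $t\ge 0$ together with the monotonicity assertion. Smoothness for $t>0$ and the decay estimates (\ref{uniform_gradient_estimate}) then follow by rescaling and applying the $\epsilon_0$-regularity Theorem~\ref{smooth:solutions} at each base time, while uniqueness and the $L^3$-continuity at $t=0$ are inherited directly from Theorem~\ref{wellposedness}.
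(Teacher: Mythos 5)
Your sketch of the $L^3$-energy mechanism (test $(\ref{LLF})_1$ against $|u|u$, test $\nabla(\ref{LLF})_3$ against $|\grad d|\grad d$, Calder\'on--Zygmund bound on $P$, Gagliardo--Nirenberg applied to $|u|^{3/2}$ and $|\grad d|^{3/2}$) matches the paper's Lemma~\ref{L3-inequality}, and the smoothness and decay via $\epsilon_0$-regularity (Theorem~\ref{smooth:solutions}) at each base time is also the paper's route. However, your opening reduction to Theorem~\ref{wellposedness} has a genuine gap: that theorem additionally requires $(d_0-e_0)\in L^3(\R^3)$ for some $e_0\in S^2$, and this is \emph{not} implied by $d_0\in\dot{W}^{1,3}(\R^3,S^2)$ with $\|\nabla d_0\|_{L^3}$ small. (For instance, a radially symmetric $d_0$ winding slowly around a great circle at angular rate $\sim 1/\log r$ has $\int_{\R^3}|\nabla d_0|^3<\infty$, arbitrarily small, yet $d_0$ has no limit at infinity and $d_0-e_0\notin L^3$ for any $e_0$.) So you cannot apply Theorem~\ref{wellposedness} to the given $(u_0,d_0)$ directly, and the $T_0\ge\tau_0R^2$-for-all-$R$ argument does not get off the ground. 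There is also a structural awkwardness: Theorem~\ref{wellposedness} is proved in \S 5, after Theorem~\ref{gwp}, and its proof relies on the local $L^3$-energy inequality and the approximation Lemma~\ref{approx}, a much heavier toolkit than is needed here.

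The paper's actual proof avoids Theorem~\ref{wellposedness} entirely and is logically cleaner. It approximates $(u_0,d_0)$ by smooth $(u_0^k,d_0^k)$ converging in $L^3\times\dot{W}^{1,3}$ (using the density of $C^\infty(\R^3,S^2)$ in $\dot{W}^{1,3}(\R^3,S^2)$ from \cite{SU}), invokes standard short-time smooth existence for each $k$, and then extends these solutions globally by the monotonicity of $E_3$: Corollary~\ref{mono_l3} runs a continuation argument with the threshold $2\epsilon_0^3$ (you state the inequality is ``valid as long as $E_3\le\epsilon_0^3$'' and then jump to $E_3(t)\le\epsilon_0^3$ for all $t$ — you need the buffer and the open-closed argument to close that loop), and Theorem~\ref{smooth:solutions} then shows that any finite maximal time would contradict itself, so $T_k=\infty$. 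Only after that does one pass to the limit. If you replace your first paragraph with this direct continuation argument, the remainder of your proposal lines up with the paper. Your invocation of Theorem~\ref{wellposedness} is an attractive-looking shortcut but in fact both overshoots (wrong hypotheses) and undershoots (you still need the $L^3$-monotonicity and approximation work, so nothing is actually saved).
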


We mention here that the first conclusion of Theorem \ref{gwp} has been  proven by  \cite{ding-lin}, which is based
on  refinement of the argument by Wang \cite{wang}. 
Since the exact values of $\nu, \lambda, \gamma$ don't play a role in this paper, we
henceforth assume
$$\nu=\lambda=\gamma=1.$$

\medskip
The paper is written as follows. In \S2, we derive an inequality for the global $L^3$-energy of smooth solutions of
(\ref{LLF}). In \S3, we derive an inequality for the local $L^3$-energy of smooth solutions of (\ref{LLF}) and prove Theorem \ref{gwp}.
In \S4, we will prove an $\epsilon_0$-regularity for suitable weak solutions to (\ref{LLF}). In particular, 
 a priori derivative estimates hold for smooth solutions to (\ref{LLF}) under a smallness condition.
In \S5, we will prove Theorem \ref{wellposedness}.

\medskip

\section{Inequality on the global $L^3$-energy and proof of Theorem \ref{gwp}}

In this section, we will derive an inequality for the $L^3$-energy $E_3(u,\nabla d)(t)$
for any smooth solution $(u, d):\mathbb R^3\times [0, T]\to \mathbb R^3\times \mathbb S^2$, for
$0<T\le\infty$,  of the system (\ref{LLF}) for nematic liquid crystals.

\begin{lemma}\label{L3-inequality}
There exists $C>0$ such that for $0<T\le\infty$ if $(u,d)\in C^\infty(\mathbb R^3\times [0, T), \mathbb R^3\times S^2)\cap C([0,T),
L^3(\mathbb R^3)\times \dot{W}^{1,3}(\mathbb R^3))$ and $P\in L^\infty([0,T), L^\frac32(\R^3))$ solves (\ref{LLF}), then
it holds
\begin{equation}\label{L3-inequality1}
  \begin{aligned}
    &\frac{d}{dt} \int_{\R^3} (|u|^3 + |\grad d |^3)
    + \left [1 - C \normL[u]{3}^2 \right ] \int_{\R^3} |u||\grad u|^2\\
    &+ \left [1 - C(\normL[u]{3} + \normL[u]{3} \normL[\grad d]{3} + \normL[\grad d]{3}^2) \right ] \int_{\R^3} |\grad d||\grad^2 d|^2 \leq 0.
  \end{aligned}
\end{equation}
\end{lemma}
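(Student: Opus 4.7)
The inequality will come from testing the two evolution equations against carefully chosen multipliers, integrating by parts to extract nonnegative dissipation, and then estimating the remaining nonlinear terms by H\"older, Sobolev, and Calder\'on--Zygmund. The choice of multipliers is dictated by the desired $L^3$-energy: $|u|u$ for the velocity equation, and $|\nabla d|\partial_k d$ for the $k$-th derivative of the director equation, summed in $k$.

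\medskip
\emph{Step 1: identity for $u$.} Multiply $(\ref{LLF})_1$ by $|u|u$ and integrate. The convection $\int |u|u\cdot(u\cdot\nabla u)=\frac13\int u\cdot\nabla|u|^3=0$ vanishes by $\nabla\cdot u=0$. The Laplacian, after integration by parts, produces the dissipation
\[
\int |u||\nabla u|^2+\int |u||\nabla|u||^2,
\]
using $u^j\partial_k u^j=|u|\partial_k|u|$. The pressure, after another integration by parts and $\nabla\cdot u=0$, becomes $\int P\, u\cdot\nabla|u|$. The liquid-crystal force integrates to $\int\partial_i(|u|u^j)\,\partial_i d\cdot\partial_j d$.

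\medskip
\emph{Step 2: identity for $d$.} Differentiate $(\ref{LLF})_3$ in $x_k$, dot with $|\nabla d|\partial_k d$, sum over $k$, and integrate. The transport term again vanishes by $\nabla\cdot u=0$. The Laplacian yields the dissipation $\int |\nabla d||\nabla^2 d|^2+\int |\nabla d||\nabla|\nabla d||^2$. The Lagrange-multiplier term is handled using $|d|^2\equiv1$, which gives $d\cdot\partial_k d=0$, so that $\int |\nabla d|\partial_k d\cdot\partial_k(|\nabla d|^2 d)$ reduces to the single quintic contribution $\int |\nabla d|^5$. A coupling term $I_1:=\int |\nabla d|\,\nabla u:(\nabla d\odot\nabla d)$ also appears.

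\medskip
\emph{Step 3: the key Gagliardo--Nirenberg ingredient.} For any nonnegative $f$, $\nabla(f^{3/2})=\tfrac{3}{2}f^{1/2}\nabla f$, so the Sobolev embedding $\dot W^{1,2}\hookrightarrow L^6$ in $\R^3$ gives
\[
\|f\|_9^3=\||f|^{3/2}\|_6^2\lesssim \int f|\nabla f|^2.
\]
Applied to $f=|u|$ (using $|\nabla|u||\le|\nabla u|$) and $f=|\nabla d|$ (using $|\nabla|\nabla d||\le|\nabla^2d|$), this yields $\|u\|_9^3\lesssim A$ and $\|\nabla d\|_9^3\lesssim B$, where $A=\int|u||\nabla u|^2$ and $B=\int|\nabla d||\nabla^2 d|^2$. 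Interpolation between $L^3$ and $L^9$ then controls $\|u\|_p,\|\nabla d\|_p$ for $3\le p\le9$.

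\medskip
\emph{Step 4: bounding the nonlinear terms.} Using Calder\'on--Zygmund on $-\Delta P=\partial_i\partial_j(u^iu^j+\partial_i d\cdot\partial_j d)$, $\|P\|_{3/2}\lesssim \|u\|_3^2+\|\nabla d\|_3^2$ and $\|P\|_3\lesssim \|u\|_6^2+\|\nabla d\|_6^2$. Combined with the interpolation from Step~3, the pressure term is bounded by an expression of the form $C\|u\|_3^2 A+C\bigl(\|u\|_3\|\nabla d\|_3+\|\nabla d\|_3^2\bigr)B$ after repeated Cauchy--Schwarz and Young's inequality. The coupling integrals of the form $\int|u|(|\nabla u|+|\nabla|u||)|\nabla d|^2$ are estimated by splitting $|u|=|u|^{1/2}\cdot|u|^{1/2}$ and applying Cauchy--Schwarz:
\[
\int|u||\nabla u||\nabla d|^2\le A^{1/2}\Bigl(\int|u||\nabla d|^4\Bigr)^{1/2}\le A^{1/2}\bigl(\|u\|_3\|\nabla d\|_3B\bigr)^{1/2},
\]
using $\|\nabla d\|_6^4\lesssim\|\nabla d\|_3\|\nabla d\|_9^3\lesssim \|\nabla d\|_3 B$, and then Young's inequality distributes the small factors into the $A$ and $B$ columns. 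The quintic term is controlled by the Gagliardo--Nirenberg chain
\[
\int|\nabla d|^5=\||\nabla d|^{3/2}\|_{10/3}^{10/3}\lesssim \||\nabla d|^{3/2}\|_2^{4/3}\,\|\nabla|\nabla d|^{3/2}\|_2^2\lesssim \|\nabla d\|_3^2\,B.
\]
Finally, integrating $I_1$ by parts in $x_k$ and using $\partial_k d\cdot\partial_k\partial_i d=\tfrac12\partial_i|\nabla d|^2$ together with $\nabla\cdot u=0$ rewrites it as integrals bounded by $\int|u||\nabla^2d||\nabla d|^2$, which Cauchy--Schwarz and $\|u\|_3^2\|\nabla d\|_9^3\lesssim \|u\|_3^2 B$ control by $C\|u\|_3B$.

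\medskip
\emph{Step 5: collection.} Summing the two identities, moving all the dissipation to the left, and inserting all the above bounds produces (\ref{L3-inequality1}), with the positive squared-gradient-of-norm terms $\int|u||\nabla|u||^2$ and $\int|\nabla d||\nabla|\nabla d||^2$ simply dropped from the left-hand side.

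\medskip
\emph{Main obstacle.} The delicate point is the pressure estimate: getting the coefficient $\|u\|_3^2$ (and not just $\|u\|_3$) in front of $A$ requires using the critical bound $\|P\|_{3/2}\lesssim \|u\|_3^2+\|\nabla d\|_3^2$ rather than $\|P\|_3$, and then recovering the necessary integrability of $u\cdot\nabla|u|$ in $L^3$ via the $|u|^{3/2}$-based Gagliardo--Nirenberg inequality from Step~3 rather than a direct H\"older estimate. All cross terms involving $\nabla d$ and $u$ must be split by Cauchy--Schwarz so that one factor carries $|u|^{1/2}$ or $|\nabla d|^{1/2}$ to match the natural weight in the dissipation.
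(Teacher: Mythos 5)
Your proposal tracks the paper's proof very closely: the same multipliers $|u|u$ and $|\nabla d|\partial_k d$, the same key Gagliardo--Nirenberg ingredient $\|f\|_{L^9}^3\lesssim\int f|\nabla f|^2$ (so that $\|u\|_{L^9}^3\lesssim A$, $\|\nabla d\|_{L^9}^3\lesssim B$), the same Riesz-transform representation of $P$, and the same decomposition of the nonlinear contributions into coefficients of $A$ and $B$. Your treatment of the transport term is a small algebraic variant (you expand $\partial_k(u\cdot\nabla d)$ before integrating by parts, whereas the paper integrates by parts once and lands directly on $\int|u||\nabla d|^2|\nabla^2 d|$), but this arrives at the same bound. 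The quintic term, the forcing term $\int|\nabla d|^2|u||\nabla u|$, and the collection step are all correct and match the paper.

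The one point that needs correction is your closing ``Main obstacle'' paragraph, which in fact inverts the actual mechanism. You assert that getting the coefficient $\|u\|_{L^3}^2$ in front of $A$ requires pairing $\|P\|_{L^{3/2}}$ against $\|u\cdot\nabla|u|\|_{L^3}$. That pairing does not close: since $|u|^{1/2}\nabla|u|=\tfrac{2}{3}\nabla(|u|^{3/2})$, one has $|u\cdot\nabla|u||\le\tfrac23|u|^{1/2}|\nabla|u|^{3/2}|$, and any H\"older split of $\||u|^{1/2}\nabla|u|^{3/2}\|_{L^3}$ asks for $\nabla|u|^{3/2}$ in some $L^q$ with $q>2$, which is not available from the energy (we only control $\|\nabla|u|^{3/2}\|_{L^2}\lesssim A^{1/2}$). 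The route that actually works --- and is what the paper uses --- is the weighted Young's inequality $\int|P||u||\nabla|u||\le C\int|u||P|^2+\tfrac12\int|u||\nabla|u||^2$, followed by H\"older with exponents $(3,3/2)$ on $\int|u||P|^2\le\|u\|_{L^3}\|P\|_{L^3}^2$, and then the Calder\'on--Zygmund estimate $\|P\|_{L^3}\lesssim\|u\|_{L^6}^2+\|\nabla d\|_{L^6}^2$ together with $\|u\|_{L^6}^4\lesssim\|u\|_{L^3}A$, $\|\nabla d\|_{L^6}^4\lesssim\|\nabla d\|_{L^3}B$. This is the $\|P\|_{L^3}$ bound, not $\|P\|_{L^{3/2}}$, and it delivers exactly $C\|u\|_{L^3}^2A + C\|u\|_{L^3}\|\nabla d\|_{L^3}B$. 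Your body of Step~4 is consistent with this correct route, so the proof stands; only the commentary mischaracterizes which pressure norm must be used.
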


\begin{proof}
Taking spatial derivatives of (\ref{LLF})$_3$, multiplying the resulting equation by $|\nabla d|\nabla d$, and integrating over $\mathbb R^3$, we have
\begin{equation}
  \label{int:dir}
 \frac{d}{dt} \int_{\R^3}\frac13 |\grad d|^3
  = {\int_{\R^3}  \grad (\lap d):|\grad d|\grad d}
  - \int_{\R^3}  \grad (u\cdot\grad d):|\grad d|\grad d
  - \int_{\R^3}  \grad (|\grad d|^2d):|\grad d|\grad d.
\end{equation}
For terms on the right hand side of (\ref{int:dir},  by integration by parts we have
\begin{equation*}
  \label{int:dir:est:a}
  \begin{aligned}
    \int_{\R^3}  \grad (\lap d):|\grad d|\grad d
    &= -\int_{\R^3\cap\{|\nabla d|>0\}}  \nabla^2 d: \nabla (|\grad d|\nabla d) \\
    &= -\int_{\R^3\cap\{|\nabla d|>0\}}  (|\grad d||\grad^2 d|^2 + \frac{|\grad^2d \cdot \grad d|^2}{|\grad d|} )\\
&\le  -\int_{\R^3} |\grad d| |\grad^2 d|^2,
  \end{aligned}
\end{equation*}
\begin{equation*}
  \label{int:dir:est:b}
  \begin{aligned}
    \int_{\R^3}  \grad (u\cdot\grad d):|\grad d|\grad d
   & = - \int_{\R^3} (u\cdot \grad d) \cdot ((\grad |\grad d|)\cdot \grad d +  |\grad d|\lap d)\\
&\lesssim \int_{\mathbb R^3} |u||\nabla d|^2|\nabla^2 d|,
  \end{aligned}
\end{equation*}
and, using $|d|=1$,
\begin{equation*}
  \label{int:dir:est:c}
  \begin{aligned}
    \int_{\R^3} \grad (|\grad d|^2 d):|\grad d| \grad d
    &= \int_{\R^3} (\grad |\grad d|^2)\cdot |\grad d| \grad (\frac{|d|^2}2) + \int_{\R^3} |\grad d|^2 \grad d:|\grad d|^2 \grad d\\
&= \int_{\R^3} |\grad d|^5.
  \end{aligned}
\end{equation*}
Putting these estimates into \eqref{int:dir}
yields
\begin{equation}
  \label{dir:diff:ineq:1}
  \frac{d}{dt}\int_{\R^3} |\grad d|^3 + \int_{\R^3} |\grad(|\grad d|^\frac32)|^2
  \lesssim \int_{\R^3} |\grad d |^5 + |u||\grad d|^2|\grad^2 d|,
\end{equation}
where have used the following variant of the Kato inequality
\begin{equation*}
|\grad |\grad d|^{\frac32}| = \frac{3}{2} |\grad d |^{\frac12} |\grad | \grad d||
\le  \frac{3}{2} |\grad d |^{\frac12} |\grad^2 d|.
\end{equation*}
Observe that  by the Sobolev inequality and the Kato inequality above, we have
\begin{equation}
  \begin{aligned}
    \int_{\R^3} |\grad d |^9 = \int_{\R^3} (|\grad d |^{\frac32})^6
    \lesssim \left (\int_{\R^3} |\grad | \grad d |^{\frac32}|^2 \right )^3
    \lesssim \left (\int_{\R^3} |\grad d| |\grad^2 d|^2 \right )^3 .
  \end{aligned}
  \label{embedEst1}
\end{equation}
Hence, by the H\"older inequality and (\ref{embedEst1}), we have
\begin{equation*}
\|\grad d \|_{L^5(\mathbb R^3)}^5
\leq \|\grad d\|_{L^3(\mathbb R^3)}^2\|\nabla d\|_{L^9(\mathbb R^3)}^3
\lesssim  \left (\int_{\R^3} |\grad d |^3 \right)^{2/3}\left(\int_{\R^3} |\grad d | |\grad^2 d|^2 \right).
\end{equation*}
For the second term on the right-hand side of (\ref{dir:diff:ineq:1}), by the
H\"older inequality and (\ref{embedEst1}) we have
\begin{equation*}
  \begin{aligned}
    \int_{\R^3} |u||\grad d|^2|\grad^2 d|
    &\leq \normL[u]{3} \normL[|\grad d|^{\frac32}]{6} \normL[|\grad d|^{\frac12}|\grad^2 d|]{2} \\
    &\lesssim \normL[u]{3} \normL[\grad|\grad d|^{\frac32}]{2} \normL[|\grad d|^{\frac12}|\grad^2 d|]{2} \\
    &\lesssim \normL[u]{3} \normL[|\grad d|^{\frac12}|\grad^2 d|]{2}^2.
  \end{aligned}
\end{equation*}
Inserting these two estimates into \eqref{dir:diff:ineq:1} yields
\begin{equation}
 \frac{d}{dt} \int_{\R^3}|\grad d|^3 + \left [ 1 - C \left(\normL[\grad d]{3}^2+ \normL[u]{3} \right ) \right ] \int_{\mathbb R^3} |\grad d||\grad^2 d|^2 \leq 0.\label{dir:diff:ineq:3}
\end{equation}

Next we estimate the $L^3$-norm of $u$.
Multiplying (\ref{LLF})$_1$  by $|u|u$ and integrating over $\R^3$ gives
\begin{equation}
  \label{NS:int}
  \begin{aligned}
    &\frac{d}{dt}\int_{\R^3} \frac13 |u|^3 \\
    &=
    \int_{\R^3} \lap u\cdot |u|u
    -\int_{\R^3} (u \cdot \grad u) \cdot |u|u
    -\int_{\R^3} \grad P \cdot |u|u
    - \int_{\R^3} \left ( \grad\cdot(\grad d \odot \grad d) \right )\cdot |u|u.
  \end{aligned}
\end{equation}
For the terms on the right hand side of (\ref{NS:int}),  by integration by parts we have
\begin{equation*}
  \begin{aligned}
    \int_{\R^3} (\lap u)\cdot |u|u
    = - \int_{\R^3} |\grad u|^2 |u| + |u||\grad|u||^2,
  \end{aligned}
\end{equation*}
\begin{equation*}
  \begin{aligned}
    \int_{\R^3} (u\cdot \grad u)\cdot |u|u
    = \int_{\R^3} u\cdot \grad\left (\frac{|u|^3}{3} \right)
    = 0,
  \end{aligned}
\end{equation*}
\begin{equation*}
  \begin{aligned}
    \int_{\R^3} \grad P \cdot |u| u
    = - \int_{\R^3} P u\cdot \grad |u| + P|u|(\grad \cdot u)
    = - \int_{\R^3} P u\cdot \grad |u|,
  \end{aligned}
\end{equation*}
and
\begin{equation*}
  \begin{aligned}
    - \int_{\R^3} (\grad \cdot (\grad d \odot \grad d))\cdot |u| u
    &= \int_{\R^3} (\grad d \odot \grad d):\grad(|u|u)  \\
    &= \int_{\R^3} (\grad d \odot \grad d):\grad |u|\otimes u +|u|
 (\grad d \odot \grad d) : \grad u  \\
    &\lesssim\int_{\R^3} |\grad d|^2 |u| |\grad u|.
  \end{aligned}
\end{equation*}
Substituting these estimates into \eqref{NS:int}, we obtain
\begin{equation}
  \begin{aligned}
    \frac{d}{dt} \int_{\R^3} |u|^3 + \int_{\R^3} |u||\grad u|^2 |u|
    \lesssim \int_{\R^3} |P| |u||\grad |u|| + \int_{\R^3} |\grad d|^2 |u| |\grad u|.
  \end{aligned}
  \label{NS:diff:ineq:1}
\end{equation}
Using the  Kato inequality $|\grad |u|| \leq |\grad u|$, the Cauchy inequality and the H\"older inequality in \eqref{NS:diff:ineq:1}, we obtain
\begin{equation*}
  \begin{aligned}
   \frac{d}{dt} \int_{\R^3} |u|^3 + \int_{\R^3} |u||\grad u|^2
    &\leq C\int_{\R^3}|u| (|P|^2 + |\grad d|^4)+ \frac{1}{2}\int_{\R^3} |u||\grad u|^2 \\
    &\leq C (\normL[P]{3}^2 +  \normL[\grad d]{6}^4)\normL[u]{3} + \frac{1}{2}\int_{\R^3} |u||\grad u|^2.
  \end{aligned}
\end{equation*}
Therefore we get
\begin{equation}
  \label{NS:diff:ineq:2}
  \frac{d}{dt} \int_{\R^3} |u|^3 + \int_{\R^3}|u| |\grad u|^2 \lesssim (\normL[P]{3}^2 +  \normL[\grad d]{6}^4)\normL[u]{3}.
\end{equation}
We need to estimate $\|P\|_{L^3(\R^3)}$.
To do so, we take divergence of \eqref{LLF}$_1$ to obtain
\begin{equation}
  \begin{aligned}
    -\lap P
    &= \grad \cdot \grad \cdot (u \otimes u + \grad d \odot \grad d). \\
  \end{aligned}
\label{PressPoisson}
\end{equation}
Set
\begin{equation*}
   g^{jk} :=  u^j u^k + \grad_j d \cdot \grad_k d, \ 1\le j, k\le 3.
\end{equation*}
Then we have
\begin{equation}
  P=\Delta^{-1}(\nabla_{jk}^2 g^{jk})=- {\mathbf R}_j{\mathbf R}_k(g^{jk}).
  \label{rieszPress}
\end{equation}
Henceforth $\displaystyle {\mathbf R}_j=(-\Delta)^{-\frac12}\nabla_j$ denotes the $j^{th}$-Riesz transform on $\mathbb R^3$
for $1\le j\le 3$.

Since ${\mathbf R}_j:L^q(\R^3) \to L^q(\R^3)$ is bounded for $1 < q < \infty$ (see Stein \cite{stein}),
 we have
\begin{equation}
  \normL[P]{3} = \normL[{\mathbf R}_j{\mathbf R}_k(g^{jk})]{3} \lesssim \normL[g^{jk}]{3} \lesssim \normL[u]{6}^2 + \normL[\grad d]{6}^2.
  \label{pressBound}
\end{equation}
Inserting \eqref{pressBound} into \eqref{NS:diff:ineq:2} yields
\begin{equation}
  \label{NS:diff:ineq:4}
  \frac{d}{dt} \int_{\R^3} |u|^3 + \int_{\R^3}|u| |\grad u|^2
\lesssim (\normL[u]{6}^4 +  \normL[\grad d]{6}^4)\normL[u]{3}.
\end{equation}
Using the H\"older inequality, the Sobolev inequality, and
$|\grad |u|^{\frac32}| \lesssim |\grad u||u|^{\frac12}$,  we have
\begin{equation*}
  \label{uSqEstimate}
  \begin{aligned}
    \normL[u]{6}^4
    \leq \normL[u]{3}\normL[u]{9}^3
    \lesssim \normL[u]{3} \normL[\grad |u|^{\frac32}]{2}^2
    \leq \normL[u]{3} \int_{\R^3} |u||\grad |u||^2.
  \end{aligned}
\end{equation*}
Similarly we have
\begin{equation*}
  \label{graddSqEstimate}
  \begin{aligned}
    \normL[\grad d]{6}^4
    \lesssim \normL[\grad d]{3} \int_{\R^3} |\grad d||\grad^2 d|^{2}.
  \end{aligned}
\end{equation*}
Substituting these two estimates into \eqref{NS:diff:ineq:4}, we obtain
\begin{equation}
  \label{NS:diff:ineq:5}
  \begin{aligned}
    \frac{d}{dt} \int_{\R^3} |u|^3 + \int_{\R^3} |u|  |\grad u|^2
    \lesssim \normL[u]{3}^2 \int_{\R^3}  |u| |\grad u|^2
 + \normL[u]{3} \normL[\grad d]{3} \int_{\R^3} |\grad d| |\grad^2 d|^2.
  \end{aligned}
\end{equation}
Combining \eqref{dir:diff:ineq:3} and \eqref{NS:diff:ineq:5} yields (\ref{L3-inequality1}). \end{proof}

\begin{cor}\label{mono_l3}
There exists $\epsilon_0 > 0$ such that for $0<T\le\infty$,
if $(u,d)\in C^\infty(\mathbb R^3\times [0,T), \mathbb R^3\times S^2)\cap L^\infty([0,T), L^3(\R^3)\times \dot{W}^{1,3}(\R^3))$
is a solution to (\ref{LLF}) satisfying
\begin{equation}\label{small3energy}
E_3(u_0,\nabla d_0)\leq \epsilon_0^3,
\end{equation}
then $E_3(u(t), \nabla d(t))$ is monotone decreasing for $0\le  t<T$.
\end{cor}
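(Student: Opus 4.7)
The plan is to use Lemma \ref{L3-inequality} together with a continuity/bootstrap argument. The idea is that the bracketed coefficients in (\ref{L3-inequality1}) depend continuously on the very quantities $\|u(t)\|_{L^3}$ and $\|\nabla d(t)\|_{L^3}$ that sum to $E_3(u,\nabla d)(t)^{1/3}$, so a smallness assumption on $E_3$ at $t=0$ propagates forward by a standard open--closed argument.

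First I would choose $\epsilon_0$. By Lemma \ref{L3-inequality} there is a universal constant $C_0>0$ such that the two bracketed factors in (\ref{L3-inequality1}) are bounded below by $\tfrac12$ whenever
$$C_0\left(\|u\|_{L^3(\R^3)}^2+\|u\|_{L^3(\R^3)}+\|u\|_{L^3(\R^3)}\|\nabla d\|_{L^3(\R^3)}+\|\nabla d\|_{L^3(\R^3)}^2\right)\le\frac12.$$
So I fix $\epsilon_0>0$ small enough that the inequality $\|u\|_{L^3}+\|\nabla d\|_{L^3}\le 2\epsilon_0$ implies the above. Note that by (\ref{small3energy}) we have $\|u_0\|_{L^3}+\|\nabla d_0\|_{L^3}\le 2E_3(u_0,\nabla d_0)^{1/3}\le 2\epsilon_0$, so the smallness is strict at $t=0$.

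Next I would set up the bootstrap. Since $(u,d)\in C^\infty(\R^3\times[0,T))\cap L^\infty([0,T),L^3\times\dot W^{1,3})$, the map $t\mapsto E_3(u(t),\nabla d(t))$ is continuous on $[0,T)$ (continuity can be extracted from the equations and spatial smoothness combined with the uniform $L^3$ bound). Define
$$T^*:=\sup\Big\{t\in[0,T)\,:\,E_3(u(s),\nabla d(s))\le 8\epsilon_0^3\ \text{for all}\ s\in[0,t]\Big\}.$$
By continuity and the initial bound $E_3(u_0,\nabla d_0)\le\epsilon_0^3<8\epsilon_0^3$, we have $T^*>0$. On $[0,T^*)$, both $\|u(s)\|_{L^3}$ and $\|\nabla d(s)\|_{L^3}$ are bounded by $2\epsilon_0$, so by the choice of $\epsilon_0$ the bracketed coefficients in (\ref{L3-inequality1}) are $\ge\tfrac12$, and Lemma \ref{L3-inequality} yields
$$\frac{d}{dt}E_3(u,\nabla d)(t)+\frac12\int_{\R^3}|u||\nabla u|^2+\frac12\int_{\R^3}|\nabla d||\nabla^2 d|^2\le 0\quad\text{on }[0,T^*).$$
In particular, $E_3(u(t),\nabla d(t))\le E_3(u_0,\nabla d_0)\le\epsilon_0^3$ for all $t\in[0,T^*)$.

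Finally I would close the bootstrap. If $T^*<T$, continuity forces $E_3(u(T^*),\nabla d(T^*))=8\epsilon_0^3$, which contradicts the monotonicity estimate $E_3(t)\le\epsilon_0^3$ just established. Hence $T^*=T$, and the differential inequality above holds on all of $[0,T)$, which gives the monotone decrease of $E_3(u(t),\nabla d(t))$ for $t\in[0,T)$. The only real subtlety I see is the continuity of $E_3$ in $t$ under the assumed regularity; this is the step where care is needed, but it follows from using the equations (\ref{LLF}) to control $\partial_t u$ and $\partial_t\nabla d$ pointwise combined with the uniform $L^3$ bound. Everything else is a soft continuation argument built on Lemma \ref{L3-inequality}.
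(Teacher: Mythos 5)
Your argument is correct and is essentially the paper's own proof: the paper fixes the bootstrap threshold at $2\epsilon_0^3$ and defines $T_{\rm max}$ as a maximum, while you use $8\epsilon_0^3$ and a supremum, but both are the same continuity/open-closed argument applied to the differential inequality of Lemma \ref{L3-inequality}, yielding the same contradiction and the same monotone-decrease conclusion. The only small addition you make is to flag the continuity of $t\mapsto E_3(t)$, which the paper takes for granted but which is indeed justified by the stated regularity.
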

\begin{proof}
 Denote
 \begin{equation*}
  E_3(t):=E_3(u(t), \nabla d(t)),\ \  t\ge 0.
  \end{equation*}
Let $T_{\rm{max}}\in [0, T)$ be defined by
\begin{equation*}
T_{\rm{max}}=\max\Big\{t\in [0, T): \ E_3(s)\le 2\epsilon_0^3,  \ \forall\  0\le s\le t\Big\}.
\end{equation*}
By continuity and  (\ref{small3energy}), we have that $0<T_{\rm{max}}\le T$, and
  \begin{equation}
    \label{gen}
    E_3(t) \leq 2\epsilon_0^3, \ 0\le t<T_{\rm{max}}, \ E_3(T_{\rm{max}})=2\epsilon_0^3.
  \end{equation}
Suppose $T_{\rm{max}}<T$. Choose $\epsilon_0 > 0$ so small that
  \begin{equation*}
    1 - C\epsilon_0^2 \geq 0 \text{ and } 1 - C(\epsilon_0 + 2\epsilon_0^2) \geq 0.
  \end{equation*}
Then (\ref{gen}) and (\ref{L3-inequality1}) imply that
  \begin{equation*}
    \frac{d}{dt} E_3(t)\le \frac{d}{dt} E_3(t)
    + \left [1 - C \epsilon_0^2 \right ] \int_{\R^3} u||\grad u|^2
    + \left [1 - C(\epsilon_0 + 2\epsilon_0^2) \right ]
    \int_{\R^3} |\grad d||\grad^2 d|^2
    \leq 0
  \end{equation*}
  holds for $0\le t\le T_{\rm{max}}$. Hence  $E_3(t)$ is decreasing in $[0, T_{\rm{max}}]$ and 
  \begin{equation*}
    E_3(T_{\rm{max}}) \leq E_3(0) \leq \epsilon_0^3 < 2\epsilon_0^3.
  \end{equation*}
This contradicts the definition of $T_{\rm{max}}$. Thus $T_{\rm{max}} = T$ and $E_3(t)$ is monotone decreasing
in $[0,T)$.  \end{proof}

\bigskip
\noindent{\bf Proof of Theorem \ref{gwp}}: Since $C^\infty(\R^3, S^2)$ is dense in $\dot{W}^{1,3}(\mathbb R^3,S^2)$
(see \cite{SU}), it is not hard to show that there exist
$\{(u_0^k,d_0^k)\}\subset C^\infty(\R^3,\R^3)\times C^\infty(\R^3, S^2)$ such that
$$\nabla\cdot u_0^k=0\ {\rm{in}}\ \R^3,
\ \ \ \lim_{k\rightarrow \infty}(\|u_0^k-u_0\|_{L^3(\R^3)}+\|\nabla(d_0^k-d_0)\|_{L^3(\R^3)})=0.$$
Consider the system (\ref{LLF}) under the initial condition $\displaystyle (u,d)|_{t=0}=(u_0^k, d_0^k)$. It is standard that
there exist $T_k>0$ and smooth solutions $\displaystyle(u_k,d_k)\in C^\infty(\R^3\times [0,T_k], \R^3\times S^2)\cap
C([0,T_k], L^3(\R^3)\times \dot{W}^{1,3}(\R^3))$ to (\ref{LLF}).

Since $E_3(u_0,\nabla d_0)\le\epsilon_0^3$, we may assume that $E_3(u_0^k, \nabla d_0^k)\le 2\epsilon_0^3$ for all $ k\ge 1.$
Hence by Corollary \ref{mono_l3}, we conclude that
$$\sup_{0\le t\le T_k} E_3(u^k(t), \nabla d^k(t))\le E_3(u_0^k, \nabla d_0^k)\le 2\epsilon_0^3, \ \forall\ k\ge 1.$$
For the corresponding pressure functions $P^k$, since
$$\Delta P^k=-\nabla\cdot\nabla\cdot(u^k\otimes u^k+\nabla d^k\odot\nabla d^k) \ {\rm{in}}\ \R^3,$$
we have
$$\sup_{0\le t\le T_k}\|P^k\|_{L^\frac32(\R^3)}\lesssim \sup_{0\le t\le T_k}
(\|u^k\|_{L^3(\R^3)}^2+\|\nabla d^k\|_{L^3(\R^3)}^2)\le C\epsilon_0^2.$$
Let $T_k$ be the maximal time interval for $(u_k, d_k)$. If $0<T_k<+\infty$, then by Theorem \ref{smooth:solutions} in \S 4 below
we conclude that $(u_k, d_k)\in C^\infty_b(\R^3\times [0, T_k], \R^3\times S^2)$. Hence
$\displaystyle (u_k(T_k), d_k(T_k))\in C^\infty(\R^3, \R^3\times S^2)\cap L^3(\R^3)\times\dot{W}^{1,3}(\R^3)$,
and
$$E_3((u_k(T_k), \nabla d_k(T_k)))\le 2\epsilon_0^3$$
so that we can extend the smooth solutions $(u_k, d_k)$ beyond the time $T_k$, which would contradict the maximality of $T_k$.
Therefore $T_k=\infty$ and the smooth solution $(u_k,d_k)$ exists globally. Moreover,
$E_3(u_k(t),\nabla d_k(t))$ is monotone decreasing and less than $2\epsilon_0^3$. By Theorem \ref{smooth:solutions}, we
have the derivative estimates:
\begin{equation}\label{uniform_grad_est}
\left\|\nabla^m u_k(t)\right\|_{L^\infty(\R^3)}+\left\|\nabla^{m+1} d_k(t)\right\|_{L^\infty(\R^3)}\le \frac{C\epsilon_0}{t^{\frac{m}2}},
\ \forall \ t>0, \ m\ge 1.
\end{equation}
After taking possible subsequences, we may assume that there exists $\displaystyle (u, d)\in C^\infty(\R^3\times (0,+\infty), \R^3\times S^2)
\cap C([0,+\infty), L^3(\R^3)\cap \dot{W}^{1,3}(\R^3))$ such that as $k\rightarrow\infty$, \\
(1) $(u_k,d_k)\rightarrow (u, d)$ in $C^m_{\rm{loc}}(\R^3\times (0, +\infty))$ for any $m\ge 1$. \\
(2) $(u_k, \nabla d_k)\rightarrow (u,\nabla d)$ weak$^*$ in $L^\infty([0,+\infty), L^3(\R^3))$.\\
Thus $(u,d)\in C^\infty(\R^3\times (0,+\infty),\R^3\times S^2)$ solves (\ref{LLF})$_1$, (\ref{LLF})$_2$, and (\ref{LLF})$_3$,
and the estimate (\ref{uniform_gradient_estimate}) holds.

Using the equation (\ref{LLF}), we can get that for any $0<T<+\infty$,
$$\sup_{k\ge 1}\Big\|(\partial_t u^k, \ \partial_t d^k)\Big\|_{L^\frac32([0,T), W^{-1,\frac32}(\R^3))}\le C(T)<+\infty.$$
This implies that $(u, d)\in C([0, T], L^3(\R^3)\times \dot{W}^{1,3}(\R^3))$ and $(u, d) |_{t=0}=(u_0,d_0)$. Applying
Corollary \ref{mono_l3} again, we conclude that $E_3(u(t),\nabla d(t))$ is monotone decreasing for $t\ge 0$.
The part of uniqueness can be proved as  in the step 6 of the proof of Theorem \ref{wellposedness} in \S 5,
which is omitted here.
The proof is complete. \qed

\medskip
We would like to mention applications of Theorem \ref{gwp} to the heat flow of harmonic maps and the Navier-Stokes
equation. \\
1) If $u\equiv 0$, then (\ref{LLF})$_3$ reduces to the heat flow of harmonic maps to $S^2$ for
$d:\R^3\times (0,+\infty)\to S^2$:
\begin{equation}\label{harmonic}
\begin{cases}
\partial_t d=\Delta d+|\nabla d|^2d &\ {\rm{in}}\ \R^3\times (0,+\infty)\\
\ \ d = d_0 &\ {\rm{on}}\ \ \R^3\times \{0\}.
\end{cases}
\end{equation}
2) If $d$ is a constant unit vector, then (\ref{LLF})$_1$ and (\ref{LLF})$_2$
reduce to the Navier-Stokes equation:
\begin{equation}\label{NSE}
\begin{cases}
\partial_t u+u\cdot\nabla u-\Delta u+\nabla P =0 & \ {\rm{in}}\ \R^3\times (0,+\infty)\\
\qquad \qquad \qquad \qquad \ \ \nabla\cdot u=0 & \ {\rm{in}}\ \R^3\times (0,+\infty)\\
\qquad \qquad \qquad \qquad \qquad\  u= u_0 & \ {\rm{on}}\ \R^3\times\{0\}.
\end{cases}
\end{equation}
The following properties follow directly from Theorem \ref{gwp}.
We would like to point out the observation of monotone decreasing property of the $L^3$-energy seems new.
\begin{rem} {\rm  1)
There exists $\epsilon_0 > 0$ such that if $d_0:\R^3\to S^2$ satisfies
$\displaystyle\int_{\R^3} |\grad d_0|^3\leq \epsilon_0^3$,
then there is a unique global solution $d:\R^3\times [0,+\infty)\to S^2$ of (\ref{harmonic})
such that $\displaystyle d\in C([0,+\infty), \dot{W}^{1,3}(\R^3, S^2))\cap C^\infty(\R^3\times (0, +\infty), S^2)$,
and $\displaystyle\int_{\R^3} |\grad d(t) |^3$ is monotone decreasing for $t\ge 0$.  \\
2) There exists $\epsilon_0 > 0$ such that if $u_0:\R^3\to \R^3$, with $\nabla\cdot u_0=0$,
satisfies $\displaystyle\int_{\R^3} |u_0|^3\leq \epsilon_0^3$,
then there is a unique, global solution $u:\R^3\times [0,+\infty)\to \R^3$ of (\ref{NSE})
such that $\displaystyle u\in C([0,+\infty), L^{3}(\R^3))\cap C^\infty(\R^3\times (0, +\infty), \R^3)$,
and $\displaystyle\int_{\R^3} |u(t) |^3$ is monotone decreasing for $t\ge 0$. }
\end{rem}

\section{Inequality of the local $L^3$-energy}

In this section, we will derive an inequality of the local $L^3$-energy for smooth solutions $(u, d):\mathbb R^3\times [0, T]\to
\mathbb R^3\times S^2$ for $0<T\le \infty$, of the system (\ref{LLF}). More precisely,
 we have

\begin{lemma}\label{local_ineq} There exists $C>0$ such that  for $0<T\le \infty$, if
$\displaystyle (u, d)\in C^\infty(\mathbb R^3\times [0, T),
\mathbb R^3\times S^2)\cap C([0,T), L^2(\R^3)\times \dot{W}^{1,2}(\R^3))$  is a smooth solution of the system (\ref{LLF}), then
\begin{equation}
  \begin{aligned}
    &\frac{d}{dt} \int_{\R^3} (|u|^3 + |\grad d|^3)\phi^2 + \int_{\R^3}\left(|\grad(|u|^{\frac32}\phi)|^2 + |\grad(|\grad d|^{\frac32}\phi)|^2\right) \\
    &\leq C \int_{\R^3} (|u|^3 + |\grad d|^3)|\grad \phi|^2 + CR^{-2}\sup_{y\in\mathbb R^3}\left(\int_{B_R(y)}|u|^3+|\nabla d|^3\right)^{\frac53} \\
    &+ C \left (\int_{\text{spt}\phi} |u|^3 +|\nabla d|^3\right )^{\frac23}
\int_{\R^3}\left(|\grad(|u|^{\frac32}\phi)|^2 + |\grad(|\grad d|^{\frac32}\phi)|^2\right),
  \end{aligned}
  \label{localEnergyInequality0}
\end{equation}
holds for any $\phi \in C_0^\infty(\R^3)$,
with $0\le \phi\le 1$, $\text{spt}\ \phi=B_R(x_0)$\footnote{Here spt$\phi$ denotes the support of $\phi$.}
 for some $R>0$ and $x_0\in\mathbb R^3$, and
$|\nabla\phi|\le 4R^{-1}$.
\end{lemma}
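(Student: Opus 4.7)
The plan is to follow the argument of Lemma \ref{L3-inequality} closely, inserting the cutoff $\phi^2$ into every energy identity and carefully handling the extra error terms generated by $\nabla\phi$ and the nonlocal contribution of the pressure. First I would take the spatial gradient of (\ref{LLF})$_3$ and test against $|\nabla d|\nabla d\,\phi^2$, and test (\ref{LLF})$_1$ against $|u|u\,\phi^2$. Each integration by parts carried out in the global case produces, in the local case, the same principal term plus a cutoff error proportional to $\nabla\phi$. These errors (for example $\int|\nabla d|^2|\nabla^2 d|\phi|\nabla\phi|$, $\int|u|^4\phi|\nabla\phi|$, $\int|P||u|^2\phi|\nabla\phi|$) are absorbed either into the local dissipation $\int(|u||\nabla u|^2+|\nabla d||\nabla^2 d|^2)\phi^2$ via $\epsilon$-Cauchy-Schwarz, or into $\int(|u|^3+|\nabla d|^3)|\nabla\phi|^2$, which is precisely the first term on the right of (\ref{localEnergyInequality0}).

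Next I would convert the localized dissipation into $H^1$-dissipation for $v:=|\nabla d|^{3/2}\phi$ and $w:=|u|^{3/2}\phi$ via the Kato-type inequalities
\begin{equation*}
|\nabla v|^2\lesssim |\nabla d||\nabla^2 d|^2\phi^2+|\nabla d|^3|\nabla\phi|^2,\quad |\nabla w|^2\lesssim |u||\nabla u|^2\phi^2+|u|^3|\nabla\phi|^2.
\end{equation*}
The quintic $\int|\nabla d|^5\phi^2$ and coupling $\int|u||\nabla d|^2|\nabla^2 d|\phi^2$, $\int|\nabla d|^2|u||\nabla u|\phi^2$ nonlinearities are handled by H\"older and the Sobolev embedding $\|v\|_{L^6(\R^3)}\lesssim\|\nabla v\|_{L^2(\R^3)}$; for instance
\begin{equation*}
\int|\nabla d|^5\phi^2=\int|\nabla d|^2 v^2\le \Bigl(\int_{\text{spt}\phi}|\nabla d|^3\Bigr)^{2/3}\|v\|_{L^6}^2\lesssim \Bigl(\int_{\text{spt}\phi}|\nabla d|^3\Bigr)^{2/3}\int|\nabla v|^2,
\end{equation*}
which reproduces the third term on the right of (\ref{localEnergyInequality0}).

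The main obstacle is the pressure contribution $-\int\nabla P\cdot|u|u\phi^2$, which after integration by parts (using $\nabla\cdot u=0$) becomes $\int P(\nabla|u|\cdot u)\phi^2+2\int P|u|u\cdot\phi\nabla\phi$. Since $P$ solves the nonlocal equation $-\Delta P=\nabla_j\nabla_k g^{jk}$ with $g^{jk}=u^ju^k+\nabla_jd\cdot\nabla_kd$, I would split $P=P_1+P_2$ using an auxiliary cutoff $\chi\in C_c^\infty(\R^3)$ with $\chi\equiv 1$ on $B_{2R}(x_0)$ and $\text{spt}\,\chi\subset B_{3R}(x_0)$, setting
\begin{equation*}
P_1=-\mathbf R_j\mathbf R_k(\chi g^{jk}),\qquad P_2=-\mathbf R_j\mathbf R_k((1-\chi)g^{jk}).
\end{equation*}
Calder\'on-Zygmund gives $\|P_1\|_{L^3(\R^3)}\lesssim\|\chi g\|_{L^3}$, and a Gagliardo-Nirenberg estimate on a ball of radius $4R$ containing $\text{spt}\,\chi$ trades the $L^6$-norms of $u,\nabla d$ on $B_{3R}(x_0)$ for (local $L^3$)$^\alpha\cdot$(dissipation)$^{1-\alpha}$; after covering $B_{3R}(x_0)$ by $O(1)$ balls of radius $R$ and applying $\epsilon$-Cauchy-Schwarz, the $P_1$-contribution is absorbed. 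For $P_2$, since $1-\chi\equiv 0$ on $B_{2R}(x_0)$ we have $\Delta P_2=0$ there; because only $\nabla P$ enters the equation, we may subtract the constant $P_2(x_0)$ and expand
\begin{equation*}
\nabla_j\nabla_k|x-y|^{-1}-\nabla_j\nabla_k|x_0-y|^{-1}\lesssim |x-x_0|\,|y-x_0|^{-4}.
\end{equation*}
Decomposing $\{|y-x_0|\ge 2R\}$ into dyadic annuli $\{2^nR\le|y-x_0|<2^{n+1}R\}$ (each covered by $O(2^{3n})$ balls of radius $R$) and summing the resulting geometric series yields
\begin{equation*}
|P_2(x)-P_2(x_0)|\lesssim R^{-2}\Bigl(\sup_{y\in\R^3}\int_{B_R(y)}(|u|^3+|\nabla d|^3)\Bigr)^{2/3},\quad x\in B_R(x_0).
\end{equation*}
Applying $\epsilon$-Cauchy-Schwarz to $\int(P_2-P_2(x_0))(\nabla|u|\cdot u)\phi^2$ reduces matters to controlling $\int|P_2-P_2(x_0)|^2|u|\phi^2\le \|P_2-P_2(x_0)\|_{L^\infty(B_R(x_0))}^2\int_{B_R(x_0)}|u|$, and the bound $\int_{B_R(x_0)}|u|\lesssim R^2(\int_{B_R(x_0)}|u|^3)^{1/3}$ delivers exactly the $R^{-2}\sup_y(\cdots)^{5/3}$ term of (\ref{localEnergyInequality0}); the boundary piece $\int P_2|u|u\cdot\phi\nabla\phi$ is dominated the same way.

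The hardest step is clearly the pressure: one must split $P$ correctly, absorb the local part $P_1$ into the dissipation via Calder\'on-Zygmund and Gagliardo-Nirenberg, and extract the sharp far-field pointwise bound for $P_2$ with the correct power $R^{-2}$ and the correct exponent $5/3$ on the $L^3_{\text{uloc}}$ quantity, using the Cauchy-Schwarz weighting by $|u|$ to convert the $L^\infty$-bound $R^{-2}(\sup_y\cdots)^{2/3}$ into the stated $R^{-2}(\sup_y\cdots)^{5/3}$. The rest of the argument is the familiar $\epsilon$-absorption and Sobolev interpolation from the proof of Lemma \ref{L3-inequality}.
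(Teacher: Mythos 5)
Your Steps 1 and 2 (localizing the $L^3$-energy estimates for $\nabla d$ and $u$, converting the localized dissipation to $\|\nabla(|u|^{3/2}\phi)\|_{L^2}^2$ and $\|\nabla(|\nabla d|^{3/2}\phi)\|_{L^2}^2$, handling the quintic and coupling terms by H\"older and Sobolev) match the paper's. Your far-field pressure analysis (subtracting a constant, the kernel cancellation $\lesssim |x-x_0|/|y-x_0|^4$, the dyadic summation giving $|P_2(x)-P_2(x_0)|\lesssim R^{-2}\sup_y(\int_{B_R(y)}|u|^3+|\nabla d|^3)^{2/3}$, and the weighting by $\int_{B_R}|u|\lesssim R^2\sup_y(\cdots)^{1/3}$ to produce the $5/3$ power) is also essentially the paper's treatment of its far-field term $II$.

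The gap is in the near-field pressure term $P_1=-\mathbf R_j\mathbf R_k(\chi g^{jk})$. Calder\'on--Zygmund gives $\|P_1\|_{L^3(\R^3)}\lesssim \|\chi g\|_{L^3}\lesssim(\int_{B_{3R}(x_0)}|u|^6+|\nabla d|^6)^{1/3}$, and Gagliardo--Nirenberg then unavoidably brings in $\|\nabla(|u|^{3/2})\|_{L^2(B_{3R}(x_0))}$ (and its $\nabla d$ analogue). This quantity lives on $B_{3R}(x_0)$ and is \emph{not} controlled by the dissipation $\int|\nabla(|u|^{3/2}\phi)|^2$ that appears on the left of \eqref{localEnergyInequality0} and in its third right-hand term, because $\phi$ vanishes outside $B_R(x_0)$. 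Covering $B_{3R}$ by $O(1)$ balls of radius $R$ does not help: the balls centered away from $x_0$ carry no $\phi$, so there is nothing to absorb against. You cannot shrink $\text{spt}\,\chi$ to $B_R(x_0)$ either, since then $(1-\chi)g$ would have a near-field piece and $P_2$ would no longer be harmonic where you need the kernel expansion.

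The paper avoids this via the commutator decomposition
\begin{equation*}
(P-c)\phi=-\mathbf R_j\mathbf R_k(g^{jk}\phi)-[\phi,\mathbf R_j\mathbf R_k](g^{jk})-c\phi,
\end{equation*}
which has two advantages your split lacks. First, in the term that does require dissipation, $\phi$ is already inside the Riesz transform, so CZ plus H\"older plus Sobolev yields precisely $\|\nabla(|u|^{3/2}\phi)\|_{L^2}$ and $\|\nabla(|\nabla d|^{3/2}\phi)\|_{L^2}$ --- the dissipation of the lemma, with no spillover onto a larger ball. Second, the near-field part of the commutator $[\phi,\mathbf R_j\mathbf R_k](g^{jk})$ has kernel $\frac{(\phi(x)-\phi(y))(x^j-y^j)(x^k-y^k)}{|x-y|^5}$; since $|\phi(x)-\phi(y)|\lesssim R^{-1}|x-y|$, this is a Riesz potential of order one ($\sim R^{-1}|x-y|^{-2}$), and Hardy--Littlewood--Sobolev maps $L^{3/2}(B_{2R})\to L^3(\R^3)$. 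Hence the near-field part requires only $\|(|u|^2+|\nabla d|^2)\chi_{B_{2R}}\|_{L^{3/2}}\lesssim\sup_y(\int_{B_R(y)}|u|^3+|\nabla d|^3)^{2/3}$ --- the $L^3_{\text{uloc}}$-norm, with no dissipation at all. That one-derivative gain from the commutator is the missing idea in your proposal; without it, the near-field part of the pressure forces an $L^6$-bound on a dilated ball that the lemma's dissipation cannot control.
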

\begin{proof} We divide the proof into three  steps.

\medskip
\noindent{\bf Step 1}. Estimation of the local $L^3$-energy of $\nabla d$.
Differentiating \eqref{LLF}$_3$ with respect to $x$,  integrating against $\phi^2|\grad d|\grad d$
over $\R^3$,  and applying integration by parts, we have
\begin{equation}
  \label{l3LocDiffIneq1}
  \begin{aligned}
    &\frac{d}{dt} \int_{\R^3} |\grad d|^3 \phi^2
    + 3 \int_{\R^3} \grad^2 d :\grad
      (\phi^2|\grad d|\grad d) \\
    &\leq 3\int_{\R^3}|\grad d|^5\phi^2
    + 3\int_{\R^3} (u \cdot \grad d)\cdot \grad\cdot(\phi^2|\grad d| \grad d),
  \end{aligned}
\end{equation}
where we have used $|d| = 1$ and the following identity to obtain the first term on the right hand side:
\begin{equation*}
  \begin{aligned}
    \grad(|\grad d|^2 d)\cdot |\grad d|(\grad d)
    = \frac12\grad(|\grad d|^2)|\grad d| \grad (|d|^2)
    + |\nabla d|^3 \grad d \cdot \grad d= |\grad d|^5.
  \end{aligned}
\end{equation*}
For the second term on the left hand side of (\ref{l3LocDiffIneq1}), direct calculations using $|\nabla|\nabla d||\le|\nabla^2 d|$ and the H\"older inequality
imply
\begin{equation*}
  \label{est:diffd:1}
  \begin{aligned}
    \int_{\R^3} \grad^2d:\grad (\phi^2|\grad d|\grad d)
    &=\int_{\R^3} |\nabla d||\nabla^2 d|^2\phi^2+\int_{\R^3\cap\{|\nabla d|>0\}}(|\nabla d|^2 \nabla|\nabla d|\cdot\nabla\phi^2
+|\nabla d||\nabla|\nabla d||^2\phi^2)\\
&\ge \frac12 \int_{\R^3} |\nabla d||\nabla^2 d|^2\phi^2 -C\int_{\mathbb R^3}|\nabla d|^3|\nabla\phi|^2.
  \end{aligned}
\end{equation*}
For the second term on the right hand side of (\ref{l3LocDiffIneq1}), by the Cauchy inequality we have
\begin{equation*}
  \label{est:transd:1}
  \begin{aligned}
   \int_{\R^3} (u \cdot \grad d)\cdot \grad\cdot (\phi^2|\grad d| \grad d)
    &\leq 2\int_{\R^3} |u| |\grad d|^2 |\grad^2 d| \phi^2 + |u||\grad d|^3 \phi |\grad \phi| \\
&\leq \frac{1}{8}\int_{\R^3} |\grad d| |\grad^2 d|^2 \phi^2+C \left( \int_{\text{spt}\phi} |u|^{3} \right)^{\frac23}
    \left (\int_{\R^3} |\grad d|^{9} \phi^{6} \right)^{\frac13}\\
 &\quad+C \left (\int_{\text{spt}\phi} |u|^3 \right)^{\frac13}
    \left (\int_{\R^3} |\grad d|^{9}\phi^6 \right )^{\frac16}
    \left (\int_{\R^3} |\grad d|^{3}|\grad \phi|^2 \right)^{\frac12}\\
&\leq    \frac{1}{8}\int_{\R^3} |\grad d| |\grad^2 d|^2 \phi^2+C\int_{\R^3} |\grad d|^{3}|\grad \phi|^2\\
&\quad+C\left( \int_{\text{spt}\phi} |u|^{3} \right)^{\frac23}
    \left (\int_{\R^3} |\grad d|^{9} \phi^{6} \right)^{\frac13}.
  \end{aligned}
\end{equation*}
By the H\"older inequality and the Sobolev inequality, we have
\begin{equation*}\label{l5_of_d}
\left(\int_{\mathbb R^3}|\nabla d|^9\phi^6\right)^\frac13\lesssim \int_{\R^3} |\grad(|\grad d|^{\frac{3}{2}} \phi)|^2,
\ \
\int_{\mathbb R^3}|\nabla d|^5\phi^2\lesssim\left (\int_{\text{spt}\phi} |\grad d|^{3} \right)^{\frac{2}{3}}
 \int_{\R^3} |\grad(|\grad d|^{\frac{3}{2}} \phi)|^2.
\end{equation*}
Putting these estimates into (\ref{l3LocDiffIneq1})
yields
\begin{equation}
  \label{l3LocDiffIneq2}
  \begin{aligned}
    &\frac{d}{dt} \int_{\R^3} \phi^2|\grad d|^3
    + \int_{\R^3}|\grad^2 d|^2 |\grad d| \phi^2 \\
    &\lesssim\int_{\R^3} |\grad d|^3|\grad \phi|^2 +
      \left (\int_{\text{spt}\phi}|u|^{3} +|\nabla d|^3\right )^{\frac{2}{3}}
    \int_{\R^3} \left|\grad(|\grad d|^{\frac{3}{2}} \phi)\right|^2.
  \end{aligned}
\end{equation}

\noindent{\bf Step 2}. Estimation of the local $L^3$-energy of $u$.
Multiplying \eqref{LLF}$_1$ by $\phi^2|u|u$ and integrating over $\R^3$ yields
\begin{equation}
  \label{NSlocDiffIneq1}
  \begin{aligned}
    &\frac{d}{dt} \int_{\R^3} |u|^3\phi^2
   +3 \int_{\R^3} \grad u \cdot \grad (\phi^2|u|u)\\
    &\lesssim \int_{\R^3} |\nabla d||\nabla^2 d||u|^2\phi^2
    + \int_{\R^3} |\grad u| |u|^3\phi^2
    +\int_{\R^3} |P-c||\grad(\phi^2|u|u)|
  \end{aligned}
\end{equation}
where $c \in \R$ is a constant to be chosen later.

By the Cauchy inequality,  the H\"older inequality, and the Sobolev inequality, we have
\begin{equation*}
  \label{est:diffu:1}
  \begin{aligned}
\int_{\R^3} \grad u \cdot \grad (\phi^2|u|u) \geq \frac{1}{2} \int_{\R^3} |u||\grad u|^2 \phi^2
    - 4 \int_{\R^3} |u|^{3}|\grad \phi|^2,
  \end{aligned}
\end{equation*}
\begin{equation*}
  \label{est:transu:1}
  \begin{aligned}
    \int_{\R^3} | \grad u | |u|^3 \phi^2
    &\leq \frac{1}{4} \int_{\R^3} |u||\grad u|^2\phi^2
    + C\left (\int_{\text{spt}\phi} |u|^{3} \right)^{\frac{2}{3}} \int_{\R^3}\left|\grad (|u|^{\frac32} \phi)\right|^2,
  \end{aligned}
\end{equation*}
and
\begin{equation*}
\begin{aligned}
    &\int_{\R^3} |\grad d| |\grad^2 d| |u|^2\phi^2
    \leq \frac{1}{8} \int_{\R^3} |\grad d| |\grad^2 d|^2\phi^2 +C\int_{\mathbb R^3}|\nabla d||u|^4\phi^2\\
    &\leq \frac{1}{8} \int_{\R^3} |\grad d| |\grad^2 d|^2\phi^2 + C \left ( \int_{\text{spt}\phi} |\grad d|^3 \right)^{\frac13}
    \left (\int_{\text{spt}\phi} |u|^3 \right)^{\frac13} \left (\int_{\R^3} |u|^9 \phi^6 \right)^{\frac13}.
  \end{aligned}
\end{equation*}
For the last term on the right hand side of  ${\eqref{NSlocDiffIneq1}}$ we have
\begin{equation*}
  \begin{aligned}
    \int_{\R^3} |P-c| |\grad \cdot (|u|u\phi^2)|
    &\leq \frac{1}{8}\int_{\R^3} |u||\grad u|^2\phi^2
    +C\int_{\R^3} |P- c|^2|u|\phi^2 +
    C\int_{\R^3} |u|^3|\grad \phi|^2.
    \end{aligned}
    \label{localPressEst}
\end{equation*}
Putting these inequalities into (\ref{NSlocDiffIneq1}) we obtain
\begin{equation}
  \label{NS:energy:ineq}
  \begin{aligned}
    &\frac{d}{dt} \int_{\R^3}|u|^3\phi^2 + \int_{\R^3} |u||\grad u|^2\phi^2 \\
    &\leq  C \int_{\R^3}|u|^3|\grad \phi|^2 + \frac{1}{4} \int_{\R^3} |\grad d | |\grad^2 d|^2 \phi^2 + C \int_{\R^3} |P-c|^2 |u|\phi^2 \\
    &+ C \left (\int_{\text{spt}\phi} |u|^3 +|\nabla d|^3\right )^{\frac23} \int_{\R^3}(|\grad(|u|^{\frac32}\phi)|^2 + |\grad(|\grad d|^{\frac32}\phi)|^2).
  \end{aligned}
\end{equation}
Combining \eqref{l3LocDiffIneq2} with \eqref{NS:energy:ineq} yields
\begin{equation}
  \begin{aligned}
    &\frac{d}{dt} \int_{\R^3} (|u|^3 + |\grad d|^3)\phi^2 + \int_{\R^3}\left(|\grad(|u|^{\frac32}\phi)|^2 + |\grad(|\grad d|^{\frac32}\phi)|^2\right) \\
    &\leq C \int_{\R^3} (|u|^3 + |\grad d|^3)|\grad \phi|^2 + C\int_{\R^3}|u||P-c|^2\phi^2 \\
    &+ C \left (\int_{\text{spt}\phi} |u|^3 +|\nabla d|^3\right )^{\frac23}
\int_{\R^3}\left(|\grad(|u|^{\frac32}\phi)|^2 + |\grad(|\grad d|^{\frac32}\phi)|^2\right).
  \end{aligned}
  \label{localEnergyInequality1}
\end{equation}
\noindent{\bf Step 3}.  Estimation of the pressure function $P$.
By the H\"older inequality, we have
\begin{equation*}
  \int_{\R^3} |u||P-c|^2\phi^2 \leq \left (\int_{\text{spt}\phi} |u|^3 \right )^{\frac13} \left (\int_{\R^3} |P-c|^3\phi^3 \right )^{\frac23}.
\end{equation*}
We see that (\ref{localEnergyInequality0}) follows from (\ref{localEnergyInequality1}) and the estimate (\ref{p-estimate}) 
of Lemma \ref{L3:pressure:lemma} below. The proof is complete.
\end{proof}

\medskip
\begin{lemma}
\label{L3:pressure:lemma} Under the same assumptions as in Lemma \ref{local_ineq}, assume
that $\phi \in C_0^\infty(\R^3)$ satisfies $0\le\phi\le 1$,  $\text{spt}\ \phi =B_R(x_0)$
for some $x_0\in\mathbb R^3$,  and $|\nabla\phi|\le 2R^{-1}$. Then there exists $C>0$
such that for any $t\in (0, T)$ there is $c(t)\in \R$ so that the following estimate holds
\begin{equation}
  \label{p-estimate}
  \begin{aligned}
    \left ( \int_{\R^3} |P(t)- c(t)|^3 \phi^3 \right)^{\frac13}
    &\leq C \left(\int_{\text{spt}\phi}|u(t)|^3+|\nabla d(t)|^3\right)^\frac16
\left(\int_{\mathbb R^3}(|\nabla(|u(t)|^\frac32\phi)|^2+|\nabla(|\nabla d(t)|^\frac32\phi)|^2\right)^\frac12\\
&\quad+CR^{-1}
 \sup_{y \in \R^3} \left (\int_{B_R(y)} |u(t)|^3 + |\grad d(t)|^3\right )^{\frac23}.
  \end{aligned}
\end{equation}
\end{lemma}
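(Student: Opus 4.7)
The plan is to start from the explicit pressure representation $P = -\mathbf{R}_j\mathbf{R}_k g^{jk}$ with $g^{jk} = u^j u^k + \nabla_j d \cdot \nabla_k d$ (established in Section~2) and decompose $P$ into a near-field and a far-field piece with respect to $B_R(x_0)$. Fix $\chi \in C_0^\infty(B_{3R}(x_0))$ with $\chi \equiv 1$ on $B_{2R}(x_0)$ and $|\nabla\chi|\lesssim R^{-1}$, and write $P - c = P_{\mathrm{near}} + (P_{\mathrm{far}} - c)$, where $P_{\mathrm{near}} := -\mathbf{R}_j\mathbf{R}_k(\chi g^{jk})$, $P_{\mathrm{far}} := -\mathbf{R}_j\mathbf{R}_k((1-\chi)g^{jk})$, and $c(t) := P_{\mathrm{far}}(x_0, t)$. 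The two pieces are designed to produce, respectively, the first and second terms on the right-hand side of (\ref{p-estimate}).

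For the far-field, the source $(1-\chi)g$ is supported outside $B_{2R}(x_0)$, so for any $x \in B_R(x_0)$ the kernel of the Riesz transform composition is evaluated at distances $|x-y|, |x_0 - y| \ge R$. Using $|\nabla K_{jk}(z)|\lesssim|z|^{-4}$ for the composed kernel together with the mean value theorem gives $|P_{\mathrm{far}}(x) - c|\lesssim R\int_{|y-x_0|\ge 2R}|g(y)|/|y-x_0|^4\,dy$. Decomposing the exterior of $B_{2R}(x_0)$ into dyadic annuli $A_k = B_{2^{k+1}R}(x_0)\setminus B_{2^k R}(x_0)$, covering each $A_k$ by $O(2^{3k})$ balls of radius $R$, and combining $|g|\lesssim|u|^2+|\nabla d|^2$ with H\"older's inequality and the uniform-local bound $M := \sup_y\int_{B_R(y)}(|u|^3+|\nabla d|^3)$, one obtains $\int_{A_k}|g|\lesssim(2^k R)^3 R^{-1} M^{2/3}$. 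Summing in $k$ yields $\sup_{B_R(x_0)}|P_{\mathrm{far}} - c|\lesssim R^{-2} M^{2/3}$, and multiplying by $|B_R|^{1/3}\sim R$ delivers the tail term $R^{-1}M^{2/3}$.

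For the near-field, further split $\chi g = \phi^2 g + (\chi-\phi^2)g$. The first piece is controlled by Calder\'on--Zygmund $L^3$-boundedness of the Riesz transforms: $\|\mathbf{R}_j\mathbf{R}_k(\phi^2 g^{jk})\|_{L^3}^3 \lesssim \int \phi^6(|u|^6+|\nabla d|^6)$. Setting $F := |u|^{3/2}\phi$ and $F_d := |\nabla d|^{3/2}\phi$ and using $\phi\le 1$, one has $\phi^6|u|^6\le F^4$; the interpolation $\|F\|_{L^4}^4\le \|F\|_{L^3}^2\|F\|_{L^6}^2$, combined with Gagliardo--Nirenberg $\|F\|_{L^3}^2\lesssim\|F\|_{L^2}\|\nabla F\|_{L^2}$ and Sobolev $\|F\|_{L^6}\lesssim\|\nabla F\|_{L^2}$, yields $\int F^4\lesssim \bigl(\int_{\mathrm{spt}\,\phi}|u|^3\bigr)^{1/2}\|\nabla F\|_{L^2}^3$. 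Summing the $u$ and $\nabla d$ contributions and taking cube roots recovers the first RHS term.

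The main technical obstacle is the residual piece $-\mathbf{R}_j\mathbf{R}_k((\chi-\phi^2)g^{jk})$, whose source is supported in $B_{3R}(x_0)$ and vanishes precisely on $\{\phi=1\}$. I expect to absorb it into the tail $R^{-1}M^{2/3}$ by exploiting the Lipschitz bound $1-\phi^2(y)\le 2(1-\phi(x)) + C R^{-1}|x-y|$: the $R^{-1}|x-y|$ term effectively converts the singular Riesz kernel $|x-y|^{-3}$ into a Riesz potential of order $-2$ (mapping $L^{3/2}\to L^3$ by Hardy--Littlewood--Sobolev), while the $(1-\phi(x))$ term is handled by the $L^{3/2}$-boundedness of Riesz transforms together with the local $L^{3/2}$ bound $\|(\chi-\phi^2)g\|_{L^{3/2}}\lesssim M^{2/3}$. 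Making this step rigorous (in particular the boundary-layer contribution from $B_{2R}\setminus B_R$ where $\chi-\phi^2 = 1$) is the delicate point of the argument.
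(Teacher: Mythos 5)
Your far-field estimate (with $c(t)=P_{\mathrm{far}}(x_0,t)$ and the dyadic-annulus argument) is correct and parallels the paper's bound of its far commutator term, and your Calder\'on--Zygmund/Gagliardo--Nirenberg chain for $\mathbf{R}_j\mathbf{R}_k(\phi^2 g^{jk})$ yields the same bound as the paper's estimate (\ref{gphi-estimate}). The residual $-\mathbf{R}_j\mathbf{R}_k\bigl((\chi-\phi^2)g^{jk}\bigr)$, however, is a genuine gap, and the Lipschitz-substitution argument you sketch cannot close it. The pointwise bound $(\chi-\phi^2)(y)\le 2\bigl(1-\phi(x)\bigr)+CR^{-1}|x-y|$ is an inequality on the magnitude of the source, not an algebraic decomposition of the integrand, so it cannot be inserted inside the principal-value integral: taking absolute values, the piece $\int(1-\phi(x))\,|K(x-y)|\,|g(y)|\,dy$ diverges, since $|K(z)|\sim|z|^{-3}$ is not locally integrable, and the oscillation of $K$ that makes $\mathbf{R}_j\mathbf{R}_k$ well defined is destroyed. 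The only rigorous way to introduce the Lipschitz factor is a commutator identity, which is exactly what the paper uses: in $[\phi,\mathbf R_j\mathbf R_k](g)(x)=\int K(x-y)\bigl(\phi(x)-\phi(y)\bigr)g(y)\,dy$ the factor $\phi(x)-\phi(y)$ sits \emph{inside} the integral and cancels the singularity. If you try the analogous commutator for your residual by writing $(\chi-\phi^2)(y)=(\chi-\phi^2)(x)+\bigl[(\chi-\phi^2)(y)-(\chi-\phi^2)(x)\bigr]$, the difference term behaves well, but the leftover is $(\chi-\phi^2)(x)\,\phi(x)\,\mathbf R_j\mathbf R_k(\tilde\chi g)(x)$ for a cutoff $\tilde\chi$ equal to $1$ on $B_{3R}(x_0)$, and after Calder\'on--Zygmund in $L^3$ this requires $\|\tilde\chi g\|_{L^3(\R^3)}$, i.e.\ an $L^6$ norm of $(u,\nabla d)$ over $B_{4R}(x_0)$ with no $\phi$-weight --- not controlled by either term on the right of (\ref{p-estimate}). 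Your fallback via $L^{3/2}$-boundedness of $\mathbf R_j\mathbf R_k$ fares no better, since an $L^{3/2}$ bound gives no $L^3(B_R)$ control.

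The repair inside your framework is to multiply $P_{\mathrm{near}}$ by $\phi$ \emph{before} any further splitting and then commute: since $\phi\chi=\phi$, one has $\phi\,P_{\mathrm{near}}=-\mathbf R_j\mathbf R_k(\phi g)-[\phi,\mathbf R_j\mathbf R_k](\chi g)$. The first term is your Calder\'on--Zygmund/Sobolev estimate (with $\phi g$ in place of $\phi^2 g$, giving the same bound via (\ref{gphi-estimate})), and the second is a commutator with compactly supported source and built-in Lipschitz factor $\phi(x)-\phi(y)$, dominated pointwise by $R^{-1}\mathbf I_1(\chi|g|)$ and thus bounded in $L^3$ by $R^{-1}M^{2/3}$ via Hardy--Littlewood--Sobolev. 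This reproduces the paper's argument, which in fact dispenses with the $\chi$ cutoff entirely: it works directly with $[\phi,\mathbf R_j\mathbf R_k](g)$ and splits the \emph{domain of integration} into $B_{2R}(x_0)$ and its complement, rather than splitting the operand.
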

\begin{proof} For simplicity, we write $(u,P,d)$ and $c$ for $(u(t), P(t), d(t))$ and $c(t)$ respectively.
  Since
  \begin{equation*}
    -\lap P  = \grad_{jk}^2 (g^{jk}), \ \ g^{jk} := u^ju^k + \grad_jd \cdot \grad_kd,
  \end{equation*}
 we have
  \begin{equation*}
    P = - {\mathbf R}_j{\mathbf R}_k(g^{jk})
  \end{equation*}
 where  ${\mathbf R}_j$ is the $j$-th Riesz transform on $\mathbb R^3$.
  Hence  we have
  \begin{equation}
    \begin{aligned}
      (P-c)\phi
      &= -{\bf R}_j {\bf R}_k(g^{jk})\phi  - c\phi\\
      &= -{\bf R}_j {\bf R}_k(g^{jk}\phi) - [\phi, {\bf R}_j{\bf R}_k](g^{jk}) -c\phi
    \end{aligned}
    \label{RRcommutator}
  \end{equation}
  where $[\phi, {\bf R}_j{\bf R}_k]$ is the commutator between $\phi$ and ${\bf R}_j{\bf R}_k$ given by
  \begin{equation*}
    [\phi, {\bf R}_j{\bf R}_k](f) = \phi\cdot {\bf R}_j{\bf R}_k(f) - {\bf R}_j{\bf R}_k(f\phi) , \ \ f \in C_0^\infty(\R^3).
  \end{equation*}
  We now estimate $\left[\phi,{\bf R}_j{\bf R}_k\right](g^{jk})$ as follows.
  \begin{equation*}
    \begin{aligned}
      &\left[\phi, {\bf R}_j{\bf R}_k\right](g^{jk})(x)\\
      &= \phi(x){\bf R}_j{\bf R}_k(g^{jk})(x) - {\bf R}_j{\bf R}_k(g^{jk}\phi)(x) \\
      &= \phi(x) \int_{\R^3} \frac{(x^j-y^j)(x^k-y^k)}{|x-y|^5} g^{jk}(y) -
      \int_{\R^3} \frac{(x^j-y^j)(x^k-y^k)}{|x-y|^5} \phi(y)g^{jk}(y)\\
      &= \int_{\R^3}\frac{(\phi(x)-\phi(y))(x^j-y^j)(x^k-y^k)}{|x-y|^5} g^{jk}(y).
    \end{aligned}
  \end{equation*}
 For any $x\in \text{spt}\ \phi=B_R(x_0)$, we have
  \begin{equation*}
    \begin{aligned}
      &\left[\phi, {\bf R}_j{\bf R}_k\right](g^{jk})(x) + c\phi(x) \\
      &= \int_{B_{2R}(x_0)}\frac{(\phi(x)-\phi(y))(x^j-y^j)(x^k-y^k)}{|x-y|^5} g^{jk}(y) dy + c\phi(x) \\
      &+ \phi(x) \left [\int_{\R^3 \setminus B_{2R}(x_0)}\frac{(x^j-y^j)(x^k-y^k)}{|x-y|^5} g^{jk}(y) dy + c \right] \\
      &=I(x) + II(x).
    \end{aligned}
  \end{equation*}
  For $I(x)$, we have that
  \begin{equation*}
    \begin{aligned}
      |I(x)|
      &\leq \int_{B_{2R}(x_0)} \frac{|\phi(x) - \phi(y)||x^j - y^j||x^k - y^k|}{|x-y|^5} |g^{jk}(y)| \\
      &\leq CR^{-1} \int_{\mathbb R^3}\frac{\chi_{B_{2R}(x_0)}(y)(|u|^2 + |\grad d|^2)(y)}{|x-y|^2} \\
      &= CR^{-1}\mathbf I_1\left ( (|u|^2+|\grad d|^2)\chi_{B_{2R}(x_0)} \right )(x),
      \end{aligned}
  \end{equation*}
  where  $\chi_{B_{2R}(x_0)}$ is the characteristic function
  of $B_{2R}(x_0)$, and $\mathbf I_1$ is the Riesz potential on $\mathbb R^3$ of order 1 given by
$$\mathbf I_1(f)(x)=\int_{\mathbb R^3} \frac{|f(y)|}{|x-y|^2}, \ x\in\mathbb R^3, \ \ \forall \ f\in L^1(\mathbb R^3).$$
Recall that by the Hardy-Littlewood-Sobolev inequality, $\mathbf I_1:L^{\frac32}(\R^3) \to L^{3}(\R^3)$ satisfies
  \begin{equation}
    \|\mathbf I_1(f)\|_{L^3(\R^3)} \lesssim \|f\|_{L^{\frac32}(\R^3)}.
  \end{equation}
  Hence we have
  \begin{equation}
    \begin{aligned}
      \|I\|_{L^3(\R^3)}
      &\lesssim R^{-1}\left\|\mathbf I_1\left ( (|u|^2+|\grad d|^2)\chi_{B_{2R}(x_0)} \right )\right\|_{L^{3}(\R^3)} \\
      &\lesssim R^{-1}\|(|u|^2+|\grad d|^2)\chi_{B_{2R}(x_0)}\|_{L^{\frac32}(\R^3)} \\
      &\lesssim R^{-1} \left (\int_{B_{2R}(x_0)} |u|^3 + |\grad d|^3 \right )^{\frac{2}{3}}\\
      &\lesssim R^{-1} \sup_{y\in\mathbb R^3}\left(\int_{B_{R}(y)} |u|^3 + |\grad d|^3 \right )^{\frac{2}{3}}.
     \end{aligned}
    \label{I-estimate}
  \end{equation}
  To estimate $II$, choose
  \begin{equation*}
    c= - \int_{\R^3\setminus B_{2R}(x_0)} \frac{(x_0-y)^j (x_0-y)^k}{|x_0-y|^5}g^{jk}(y).
  \end{equation*}
Note that
$$|c|\lesssim R^{-3}\sum_{j,k}
\int_{\R^3}|g^{jk}|\lesssim R^{-3} \left(\|u\|_{L^2(\R^3)}^2+\|\nabla d\|_{L^2(\R^3)}^2\right)
<+\infty.$$
Therefore we have
  \begin{equation*}
    \begin{aligned}
      |II(x)|
      &= \left | \phi(x) \int_{\R^3\setminus B_{2R}(x_0)}
        \left ( \frac{(x^j-y^j)(x^k - y^k)}{|x-y|^5} - \frac{(x_0-y)^j(x_0-y)^k}{|x_0-y|^5} \right ) g^{jk}(y)\right | \\
      &\lesssim R |\phi(x)| \int_{\R^3\setminus B_{2R}(x_0)} \frac{1}{|x-y|^4}(|u|^2 + |\grad d|^2)(y),
    \end{aligned}
  \end{equation*}
 where we have used the following inequality (see \cite{stein}):
  \begin{equation*}
    \left | \frac{(x^j-y^j)(x^k - y^k)}{|x-y|^5} - \frac{(x_0-y)^j(x_0-y)^k}{|x_0-y|^5} \right | \lesssim \frac{|x_0-x|}{|x_0-y|^4},
    \text{ for } x\in B_{R}(x_0) \text{ and } y \in \R^3 \setminus B_{2R}(x_0).
    \label{stdineq}
  \end{equation*}
  Thus we have
  \begin{equation*}
    \begin{aligned}
      |II|(x) &\lesssim R\int_{\R^3\setminus B_{2R}(x_0)} \frac{1}{|x_0-y|^4}(|u|^2 + |\grad d|^2)(y)\\
      &\lesssim R \sum_{k=2}^\infty \frac{1}{(kR)^4} \int_{B_{(k+1)R}(x_0)\setminus B_{kR}(x_0)} (|u|^2 + |\grad d|^2)\\
      &\lesssim \frac{1}{R^3} \left [\sum_{k=2}^\infty \frac{1}{k^2} \right ]
      \sup_{y \in \R^3} \int_{B_R(y)} (|u|^2 + |\grad d|^2) \\
      &\lesssim R^{-2}\sup_{y\in \R^3} \left ( \int_{B_R(y)} |u|^3 + |\grad d|^3 \right )^{\frac23}.
    \end{aligned}
  \end{equation*}
  Integrating $II$ over $B_{R}(x_0)$ we get
  \begin{equation}\label{II-estimate}
    \|II\|_{L^3(\R^3)} \lesssim R^{-1} \sup_{y \in \R^3} \left ( \int_{B_R(y)} |u|^3 + |\grad d|^3 \right )^{\frac23}.
  \end{equation}
  Additionally, we have
  \begin{equation}\label{gphi-estimate}
\begin{aligned}
    \left\|{\bf R}_j{\bf R}_k(g^{jk}\phi)\right\|_{L^3(\R^3)}
&\lesssim \left (\int_{\R^3} (|u|^6 + |\grad d|^6)\phi^3 \right)^{\frac13}\\
&\lesssim \left(\int_{\text{spt}\phi}|u|^3+|\nabla d|^3\right)^\frac16
\left(\int_{\mathbb R^3}(|u|^9+|\nabla d|^9)\phi^6\right)^\frac16 \\
&\lesssim \left(\int_{\text{spt}\phi}|u|^3+|\nabla d|^3\right)^\frac16
\left(\int_{\mathbb R^3}(|\nabla(|u|^\frac32\phi)|^2+|\nabla(|\nabla d|^\frac32\phi)|^2\right)^\frac12.
\end{aligned}
  \end{equation}
Combining  the estimates (\ref{I-estimate}) and (\ref{II-estimate}) with (\ref{gphi-estimate})
yields (\ref{p-estimate}). This completes the proof of Lemma \ref{L3:pressure:lemma}.
\end{proof}

\section{Regularity of Suitable Weak Solutions}

In this section, we will derive a priori estimates for smooth solutions to the system (\ref{LLF})
under a smallness condition for the $L^3$-norm of $(u,\nabla d)$. Since the method is flexible enough, it also
yields the smoothness for a subclass of suitable weak solutions to (\ref{LLF}). We present the result
in the context of suitable weak solutions to (\ref{LLF}). The notion of suitable
weak solutions was first introduced by Caffarelli-Kohn-Nirenberg \cite {CKN} in the context of incompressible Navier-Stokes equations. Here we adapt this notion to (\ref{LLF}), similar
to the definition given by Lin \cite{lin1} on the Navier-Stokes equation.

Let $0<T\le\infty$ and  $\Omega\subset\mathbb R^3$ be  a  bounded smooth domain.
\begin{defn}
  A triple of functions $(u,P,d): \Omega \times (0,T) \to \R^3 \times \R \times S^2$ is called a \emph{suitable  weak solution} to the system (\ref{LLF}) in $\Omega \times (0,T)$ if the following properties hold:
  \begin{enumerate}
    \item $u \in L_t^\infty L_x^2 \cap L_t^2H_x^1(\Omega \times (0,T))$, $P \in L^{\frac{3}{2}}(\Omega \times (0,T))$
      and $d \in L_t^2H_x^2(\Omega \times (0,T))$;
    \item $(u,P,d)$ satisfies the system (\ref{LLF}) in the sense of distributions; and
    \item $(u,P,d)$ satisfies the local energy inequality \eqref{locEnergy}. 
  \end{enumerate}
\end{defn}

Now we would like to point out that the class of smooth solutions  belongs to
the class of suitable weak solutions to the system (\ref{LLF}).
Let $\mathbb I_3$ denote the identity matrix of order $3$.

\begin{lemma} \label{local_energy_inequality}
Suppose that $(u,d)\in C^\infty(\Omega\times(0,T), \mathbb R^3\times \mathbb R\times S^2)$ is a solution of (\ref{LLF}) in $\Omega \times (0,T)$.
 Then for any nonnegative  $\phi \in C_0^\infty(\Omega \times (0,T))$, it holds that
  \begin{equation}
    \begin{aligned}
      2 \int_{\Omega \times (0,T)} \left ( |\grad u |^2 + | \lap d + |\grad d|^2 d|^2 \right )\phi
      &\leq \int_{\Omega \times (0,T)} \left ( |u|^2 + |\grad d|^2 \right )(\phi_t + \lap \phi) \\
      &+ \int_{\Omega \times (0,T)} (|u|^2 + |\grad d|^2 + 2P)u\cdot \grad \phi \\
      &+ 2 \int_{\Omega \times (0,T)} \left (\grad d \odot \grad d - |\grad d|^2 \mathbb I_3 \right):\grad^2 \phi \\
      &+ 2 \int_{\Omega \times (0,T)} \grad d \odot \grad d: u \otimes \grad \phi.
    \end{aligned}
    \label{locEnergy}
  \end{equation}
  \label{locEnergyLemma}
\end{lemma}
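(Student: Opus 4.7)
The plan is to establish (\ref{locEnergy}) as an \emph{equality} for smooth solutions; the stated inequality is then automatic. I would test (\ref{LLF})$_1$ against $u\phi$ and (\ref{LLF})$_3$ against $-(\lap d + |\grad d|^2 d)\phi$, integrate over $\Omega\times(0,T)$, and add the resulting two identities. The motivation for these multipliers is that they precisely reproduce the ``energy production'' quantities $|\grad u|^2$ and $|\lap d+|\grad d|^2 d|^2$ appearing on the left-hand side of (\ref{locEnergy}).

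For the fluid equation, six standard integration-by-parts maneuvers --- using $\grad\cdot u=0$, the compact support of $\phi$, and the splitting $\grad(u\phi)=\grad u\,\phi + u\otimes\grad\phi$ --- give $\int|\grad u|^2\phi$ equal to $\tfrac12\int|u|^2\phi_t + \tfrac12\int|u|^2 u\cdot\grad\phi + \tfrac12\int|u|^2\lap\phi + \int P u\cdot\grad\phi$, plus two director-coupling terms $\int(\grad d\odot\grad d){:}\grad u\,\phi$ and $\int(\grad d\odot\grad d){:}(u\otimes\grad\phi)$. For the director equation, the constraint $|d|=1$ supplies the key pointwise identity $|\lap d+|\grad d|^2 d|^2 = |\lap d|^2 - |\grad d|^4$ (because $\lap d\cdot d = -|\grad d|^2$), together with $d\cdot d_t = 0$ and $d\cdot\grad d = 0$. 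Using these, the tested equation collapses to $\int\lap d\cdot d_t\,\phi + \int\lap d\cdot(u\cdot\grad d)\phi$, and further integration by parts in space and in time yields $\tfrac12\int|\grad d|^2\phi_t$, a stray term $-\int \partial_i d\cdot d_t\,\partial_i\phi$, the transport piece $\tfrac12\int|\grad d|^2 u\cdot\grad\phi$, and exactly the same two coupling integrals from the fluid side but with opposite signs.

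Adding the two identities cancels the coupling terms completely, leaving only $-\int \partial_i d\cdot d_t\,\partial_i\phi$ to process. Here I would substitute (\ref{LLF})$_3$ for $d_t$ one last time: the $|\grad d|^2 d$ contribution vanishes by $\partial_i d\cdot d = 0$; the $u\cdot\grad d$ contribution restores one copy of $\int(\grad d\odot\grad d){:}(u\otimes\grad\phi)$; and the $\lap d$ contribution, after one more integration by parts together with the elementary identity $\partial_i d^k\,\partial_i\partial_j d^k = \tfrac12\,\partial_j|\grad d|^2$, yields $\int(\grad d\odot\grad d - \tfrac12|\grad d|^2\,\mathbb I_3){:}\grad^2\phi$. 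Doubling everything and absorbing $\tfrac12|\grad d|^2\lap\phi$ into the Laplacian terms recombines the right-hand side into exactly the form $\int(|u|^2+|\grad d|^2)(\phi_t+\lap\phi) + \int(|u|^2+|\grad d|^2+2P)u\cdot\grad\phi + 2\int(\grad d\odot\grad d - |\grad d|^2\mathbb I_3){:}\grad^2\phi + 2\int(\grad d\odot\grad d){:}(u\otimes\grad\phi)$ required by (\ref{locEnergy}). The main obstacle is purely bookkeeping --- tracking signs through three successive integrations by parts and invoking the unit-norm constraint at three distinct junctures --- but no analytic delicacy is needed since $(u,d)$ is smooth, and in fact one obtains an equality rather than merely an inequality.
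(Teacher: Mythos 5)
Your proposal is correct; it recovers \eqref{locEnergy} as an identity for smooth solutions, which is what the paper does as well.

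The only substantive difference is the choice of multiplier for the director equation. The paper takes the spatial gradient of \eqref{LLF}$_3$ and pairs against $(\grad d)\phi$; you leave the equation undifferentiated and pair against the tension field $-(\lap d + |\grad d|^2 d)\phi$. These are equivalent up to one integration by parts plus the orthogonality relations coming from $|d|=1$, namely
\[
d\cdot d_t = 0,\qquad d\cdot\grad d = 0,\qquad d\cdot(\lap d + |\grad d|^2 d) = 0,
\]
so it is a reorganization of the same computation rather than a different method. Concretely, in the paper the term $-\int \lap d\cdot(\grad d\cdot\grad\phi)$ arises directly and is handled by a further integration by parts, whereas in your layout the same information appears as the ``stray'' term $-\int \partial_i d\cdot d_t\,\partial_i\phi$ and is dispatched by substituting \eqref{LLF}$_3$ for $d_t$ once more; in both cases the coupling integrals $\int(\grad d\odot\grad d):\grad u\,\phi$ cancel between the fluid and director identities, which is the real mechanism. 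Your version has the small advantage of producing $\int|\lap d + |\grad d|^2 d|^2\phi$ immediately from the multiplier; the paper's version has the small advantage of staying at the level of $\grad d$ throughout, which makes the local $\tfrac12|\grad d|^2$-energy bookkeeping a bit more transparent. I verified your final bookkeeping: after substituting for $d_t$, the $|\grad d|^2 d$ part annihilates by $\partial_i d\cdot d=0$, the $u\cdot\grad d$ part restores one copy of $\int(\grad d\odot\grad d):(u\otimes\grad\phi)$, and the $\lap d$ part gives $\int(\grad d\odot\grad d - \tfrac12|\grad d|^2\mathbb I_3):\grad^2\phi$; doubling and moving $\tfrac12|\grad d|^2\lap\phi$ between the $\mathbb I_3:\grad^2\phi$ and $\lap\phi$ groupings produces exactly the right-hand side of \eqref{locEnergy}.
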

\begin{proof} Multiplying (\ref{LLF})$_1$ by $u\phi$ and integrating the resulting equation over
$\Omega\times (0, T)$ yields
  \begin{equation}
    \label{NS:suit:int}
    \begin{aligned}
      &\int_{\Omega\times(0,T)} u_t\cdot u\phi
      + \int_{\Omega\times(0,T)} (u\cdot \grad u)\cdot u\phi
      -\int_{\Omega\times(0,T)} \lap u\cdot u \phi
      + \int_{\Omega\times(0,T)} \grad P \cdot u\phi \\
      &=\int_{\Omega\times(0,T)} \grad d \odot \grad d:\nabla(u\phi).
    \end{aligned}
  \end{equation}
Applying integration by parts, the terms on the left hand side of (\ref{NS:suit:int})
can be estimated by
  \begin{equation*}
    \begin{aligned}
      \int_{\Omega \times (0,T)} u_t\cdot u\phi
      &= -\int_{\Omega \times (0,T)} \frac{1}{2}|u|^2 \phi_t, \\
      \int_{\Omega \times (0,T)} (u\cdot \grad u)\cdot u\phi
      &= -\int_{\Omega \times (0,T)} \frac12 |u|^2 u\cdot \grad\phi, \\
      \int_{\Omega \times (0,T)} \lap u\cdot u \phi
      &= -\int_{\Omega \times (0,T)} |\grad u|^2\phi
+ \int_{\Omega \times (0,T)} \frac12{|u|^2}  \lap \phi, \\
      \int_{\Omega \times (0,T)} \grad P \cdot u\phi
      &= - \int_{\Omega \times (0,T)} P(u \cdot \grad \phi).
    \end{aligned}
  \end{equation*}
  For the term on the right hand side of (\ref{NS:suit:int}),  we have
  \begin{equation*}
    \begin{aligned}
    \int_{\Omega \times (0,T)} \grad d \odot \grad d : \grad \cdot (u\phi)
   = \int_{\Omega \times (0,T)} \grad d \odot \grad d : \left [(\grad u) \phi + u \otimes \grad \phi \right].
    \end{aligned}
  \end{equation*}
  Putting these identities into \eqref{NS:suit:int} yields
  \begin{equation}
    \label{NS:suit:int:2}
    \begin{aligned}
      &\int_{\Omega \times (0,T)} - \frac{1}{2}|u|^2 (\phi_t  +\lap \phi) - (\frac12{|u|^2} + P)
    (u \cdot \grad \phi)
      + \int_{\Omega \times (0,T)} |\grad u|^2\phi \\
      &= \int_{\Omega \times (0,T)} (\grad d \odot \grad d) : \left [\phi \grad u + u \otimes \grad \phi \right].
    \end{aligned}
  \end{equation}
  Differentiating \eqref{LLF}$_3$ with respect to $x$ and integrating against $(\grad d)\phi$, we have
  \begin{equation}
    \label{dir:int:loc}
   \int_{\Omega \times (0,T)}(\grad d)_t : (\grad d)\phi
    + \int_{\Omega \times (0,T)} \grad(u \cdot \grad d):(\grad d)\phi
    = \int_{\Omega \times (0,T)} \grad \left [\lap d + |\grad d|^2d \right ]:(\grad d)\phi
  \end{equation}
For the first term on the left hand side of (\ref{dir:int:loc}),  we have
  \begin{equation*}
    \int_{\Omega \times (0,T)}(\grad d)_t : (\grad d)\phi = -\int_{\Omega \times (0,T)}\frac{1}{2}|\grad d|^2\phi_t.
  \end{equation*}
  Using \eqref{LLF}$_2$,  we can simplify the second term on the left hand side of (\ref{dir:int:loc}) into
  \begin{equation*}
    \begin{aligned}
      \int_{\Omega \times (0,T)}\grad(u \cdot \grad d):(\grad d)\phi
      &= \int_{\Omega \times (0,T)} u^j_\alpha d_j\cdot d_\alpha\phi  + \int_{\Omega \times (0,T)} u^j d_{j\alpha}\cdot d_\alpha\phi \\
      &= \int_{\Omega \times (0,T)} \grad u : \grad d \odot \grad d\phi
      + \int_{\Omega \times (0,T)} \frac12 u \cdot \grad (|\grad d|^2 ) \phi \\
      &= \int_{\Omega \times (0,T)} \grad u : \grad d \odot \grad d\phi
      - \int_{\Omega \times (0,T)} \frac12 (u\cdot\grad\phi) |\grad d|^2.
    \end{aligned}
  \end{equation*}
For the term on the right hand side of (\ref{dir:int:loc}),  differentiating $|d|=1$ gives
  \begin{equation*}
    \label{hm:id}
    \begin{aligned}
    \grad d \cdot d =0 \ \ {\rm{and}}\ \  \lap d \cdot d + |\grad d|^2=0.
    \end{aligned}
  \end{equation*}
  Thus, by integration by parts we have
  \begin{equation*}
    \label{dir:int:loc:1}
    \begin{aligned}
      \int_{\Omega \times (0,T)} \grad \left [\lap d + |\grad d|^2d \right ]\cdot\grad d\phi
      &= -\int_{\Omega \times (0,T)} \left [ \lap d + |\grad d|^2d \right ]\cdot
      \left [ \lap d\phi + \grad d \cdot \grad \phi \right] \\
      &= -\int_{\Omega \times (0,T)} |\lap d + |\grad d|^2d|^2\phi-
 \int_{\Omega \times (0,T)}\lap d\cdot (\grad d \cdot \grad \phi).
    \end{aligned}
  \end{equation*}
By integration by parts we have
  \begin{equation*}
    \label{dir:int:loc:2}
    \begin{aligned}
      - \int_{\Omega \times (0,T)} \lap d (\grad d \cdot \grad \phi)
      &= \int_{\Omega \times (0,T)} \left ( \grad d \odot \grad d \right ) : \grad^2 \phi
      - \int_{\Omega \times (0,T)} \frac12 |\grad d|^2\lap \phi\\
      &= \int_{\Omega \times (0,T)} \left ( \grad d \odot \grad d - |\grad d|^2\mathbb I_3 \right) : \grad^2 \phi
      + \int_{\Omega \times (0,T)} \frac12{|\grad d|^2} \lap \phi.
    \end{aligned}
  \end{equation*}
  Inserting these identities into \eqref{dir:int:loc} yields
  \begin{equation}
    \label{dir:int:loc:3}
    \begin{aligned}
      &\int_{\Omega \times (0,T)}
      [ -\frac{1}{2}|\grad d|^2(\phi_t + \lap \phi)- \frac{1}{2}|\grad d|^2(u \cdot \grad \phi)]
      + \int_{\Omega \times (0,T)} \grad u : \grad d \odot \grad d\phi \\
      &= \int_{\Omega \times (0,T)} \left ( \grad d \odot \grad d - |\grad d|^2\mathbb I_3 \right) : \grad^2 \phi
      -\int_{\Omega \times (0,T)} |\lap d + |\grad d|^2d|^2\phi.
    \end{aligned}
  \end{equation}
  Combining \eqref{NS:suit:int:2} with \eqref{dir:int:loc:3} yields \eqref{locEnergy}.
\end{proof}

\medskip
\begin{cor}
  Suppose that $(u,P,d):\Omega\times (0, T)\to \mathbb R^3\times \mathbb R\times S^2$ is a suitable weak solution of the system (\ref{LLF}) in $\Omega \times (0,T)$.
Then for any nonnegative $\phi \in C^\infty(\Omega \times (0,T))$ and  $0<t<T$, it holds
  \begin{equation}
    \begin{aligned}
      &\int_{\Omega \times \{t\}} (|u|^2 + |\grad d|^2)\phi
      + 2\int_{\Omega \times (0,t)} (|\grad u|^2 + |\lap d + |\grad d|^2d|^2) \phi \\
      &\leq \int_{\Omega \times (0,t)} (|u|^2 + |\grad d|^2)(\phi_t + \lap \phi)
      + \int_{\Omega \times (0,t)} (|u|^2 + |\grad d|^2 + 2P)u\cdot\grad \phi \\
      &+ 2\int_{\Omega \times (0,t)}(\grad d \odot \grad d - |\grad d|^2\mathbb I_3):\grad^2\phi
      + 2\int_{\Omega \times (0,t)} \grad d\odot \grad d:u \otimes \grad \phi.
      \label{HLWLocEnergyIneq}
    \end{aligned}
  \end{equation}
\end{cor}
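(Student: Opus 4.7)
The plan is to derive \eqref{HLWLocEnergyIneq} from the local energy inequality \eqref{locEnergy}, which holds for the suitable weak solution $(u,P,d)$ by condition (3) of the definition, by substituting a time-cutoff version of $\phi$ as the test function and then passing to the limit. Concretely, I would multiply $\phi$ by a smooth temporal cutoff $\eta_\epsilon(s)$ that approximates $\chi_{(0,t)}$, so that the concentration of $\eta_\epsilon'$ near $s=t$ produces the target boundary term $\int_{\Omega\times\{t\}} (|u|^2 + |\grad d|^2)\phi$ on the left-hand side of the inequality.

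For each small $\epsilon>0$, I would take $\eta_\epsilon \in C^\infty(\R;[0,1])$ with $\eta_\epsilon \equiv 1$ on $[2\epsilon, t-\epsilon]$, $\eta_\epsilon$ vanishing outside $(\epsilon, t)$, $\eta_\epsilon'\ge 0$ on $[\epsilon,2\epsilon]$, $\eta_\epsilon'\le 0$ on $[t-\epsilon, t]$, and $|\eta_\epsilon'|\lesssim 1/\epsilon$. Then $\phi\eta_\epsilon$ (after an auxiliary spatial cutoff if needed) is nonnegative and compactly supported in $\Omega\times(0,T)$, so \eqref{locEnergy} applies. Since spatial derivatives commute with $\eta_\epsilon$, the only new feature is $(\phi\eta_\epsilon)_t = \phi_t\eta_\epsilon + \phi\eta_\epsilon'$; as $\epsilon\to 0^+$, all terms free of $\eta_\epsilon'$ converge by dominated convergence, using the integrability built into the definition of suitable weak solution ($u\in L_t^\infty L_x^2\cap L_t^2 H_x^1$, $P\in L^{3/2}$, $d\in L_t^2 H_x^2$), to the corresponding integrals over $\Omega\times(0,t)$.

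For the remaining piece $\int (|u|^2+|\grad d|^2)\phi\,\eta_\epsilon'$, the support of $\eta_\epsilon'$ splits into two parts: near $s=t$, where $\eta_\epsilon'\le 0$ with $\int \eta_\epsilon'\,ds = -1$, and near $s=0$, where $\eta_\epsilon'\ge 0$ with $\int \eta_\epsilon'\,ds = +1$. The key step is to identify the limit of the contribution near $s=t$ as $-\int_{\Omega\times\{t\}}(|u|^2+|\grad d|^2)\phi$; this is possible because suitable weak solutions of \eqref{LLF} enjoy weak-$L^2$ time continuity of $u(\cdot,s)$ and $\grad d(\cdot,s)$ (a consequence of the equations together with the a priori bounds), so that $s \mapsto \int (|u(s)|^2+|\grad d(s)|^2)\phi(\cdot,s)$ is lower semicontinuous in $s$ for fixed nonnegative $\phi$. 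The symmetric contribution near $s=0$ produces the initial-data trace, which is absorbed either by an implicit condition $\phi(\cdot,0)=0$ or carried as in the standard Caffarelli--Kohn--Nirenberg framework. Transferring the $s=t$ trace to the left yields \eqref{HLWLocEnergyIneq}.

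The hard part is controlling the limit of the $\eta_\epsilon'$-contribution with the correct inequality direction. The lower semicontinuity $\int(|u(t)|^2+|\grad d(t)|^2)\phi \le \liminf_{s\uparrow t}\int(|u(s)|^2+|\grad d(s)|^2)\phi$, combined with the sign $\eta_\epsilon'\le 0$ on $[t-\epsilon,t]$, is precisely what gives the one-sided bound compatible with moving the $t$-trace to the left-hand side; the reverse inequality would not in general be available. Securing this weak-$L^2$ time continuity for a general suitable weak solution of \eqref{LLF} --- which is also what renders $\int_{\Omega\times\{t\}}(|u|^2+|\grad d|^2)\phi$ meaningful as a pointwise-in-$t$ quantity --- is where the real work lies; the remaining passages to the limit are routine.
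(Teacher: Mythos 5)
Your proposal follows the same route as the paper: replace $\phi$ by $\eta_\epsilon\phi$ with a smooth temporal cutoff concentrating its derivative near $s=t$, apply \eqref{locEnergy}, and pass to the limit $\epsilon\to0^+$, using lower semicontinuity of the local $L^2$-energy (via weak-$L^2$ time continuity of $u$ and $\nabla d$) to produce the one-sided trace $\int_{\Omega\times\{t\}}(|u|^2+|\nabla d|^2)\phi$. The only cosmetic difference is that the paper's $\eta_\epsilon$ equals $1$ on $[0,t-2\epsilon]$ (so no cutoff near $s=0$, implicitly requiring $\phi$ to vanish there to keep $\eta_\epsilon\phi\in C_0^\infty$), whereas you also truncate near $s=0$ and then discuss the resulting initial-data term; you correctly flag the weak time continuity and sign of $\eta_\epsilon'$ as the substantive points that make the limiting inequality go in the right direction.
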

\begin{proof}  For $\epsilon>0$, let $\eta_\epsilon\in C^\infty([0,t])$ be such that
$0\le \eta\le 1$,  \ $\eta=1$ in $[0, t-2\epsilon]$, and $\eta=0$ in $[t-\epsilon, t]$.
(\ref{HLWLocEnergyIneq}) follows by first applying
(\ref{locEnergy}), with $\phi$ replaced by $\eta_\epsilon(t)\phi(x,t)$, and then taking $\epsilon\rightarrow 0$.
\end{proof}

\medskip
Let $\mathcal C(3)>0$ denote the best Sobolev constant of $\mathbb R^3$:
\begin{equation}\label{sobolev}
\mathcal C(3):=\inf\Big\{\frac{\|\nabla f\|_{L^2(\mathbb R^3)}}{\|f\|_{L^6(\mathbb R^3)}}: \ 0\not=f\in C_0^\infty(\mathbb R^3)\Big\},
\end{equation}
and $\mathcal D(3)>0$ denote the constant  in  the following $W^{2,2}$-estimate:
\begin{equation} \label{w22constant}
\|\nabla^2 f\|_{L^2(B_1)}\le \mathcal  D(3)\|\Delta f\|_{L^2(B_1)}+C\|\nabla f\|_{W^{\frac12,2}(\partial B_1)}, \ \forall\ f\in W^{2,2}(B_1).
\end{equation}

For $z_0=(x_0,t_0)\in\mathbb R^3\times (0, T)$ and $r_0>0$, denote
$$B_{r_0}(x_0)=\{x\in\mathbb R^3 \ : \ |x-x_0|<r_0\}, \
P_{r_0}(z_0)=B_{r_0}(x_0)\times (t_0-r_0^2, t_0].$$
Now we are ready to prove the following $\epsilon_0$-regularity theorem.
\begin{thm}
  \label{smooth:solutions} For any $\delta>0$, there exists $\epsilon_0>0$ such that
  $(u,P,d):\Omega\times (0, T)\to\mathbb R^3\times\mathbb R\times S^2$ is a suitable
weak solution to \eqref{LLF}, and satisfies, for $z_0=(x_0,t_0)\in\Omega\times (0,T)$ and $P_{r_0}(z_0)\subset\Omega\times (0, T)$,
  \begin{equation}\label{eps0}
    \left ( r_0^{-2} \int_{P_{r_0}(z_0)} |u|^3 \right )^{\frac{1}{3}}
    + \left ( r_0^{-2} \int_{P_{r_0}(z_0)} |P|^{\frac{3}{2}} \right )^{\frac{2}{3}}
    + \left ( r_0^{-2} \int_{P_{r_0}(z_0)} |\grad d|^3 \right )^{\frac{1}{3}}
\leq \epsilon_0,
  \end{equation}
  and
  \begin{equation}\label{l3infinity}
   \Big\|\grad d\Big\|_{L^\infty_tL^3_x(P_{r_0}(z_0))} < \frac{1-\delta}{\mathcal{C}(3)\mathcal D(3)},
  \end{equation}
  then $(u,d) \in C^{\infty}(P_{\frac{r_0}4}(z_0), \mathbb R^3\times S^2)$, and the following estimate holds:
  \begin{equation}
  \label{cm:estimate}
  \|(u,d)\|_{C^m(P_{\frac{r_0}4}(z_0))} \leq C(m,r_0, \epsilon_0), \ \forall\ m\ge 0.
  \end{equation}
\end{thm}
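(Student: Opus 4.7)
The plan is to rescale, establish subcritical control of $\nabla^2 d$ and $\nabla u$ in $L^2_{\mathrm{loc}}$ from the local energy inequality (using (\ref{l3infinity}) to absorb the supercritical term $|\nabla d|^2 d$), then iterate the local $L^3$-energy inequality of Lemma \ref{local_ineq} to produce Morrey-type decay of the scale-invariant quantity $r^{-2}\int_{P_r}(|u|^3+|\nabla d|^3)$; parabolic Campanato embedding will then give H\"older continuity of $u$ and $\nabla d$, and a standard Schauder bootstrap yields (\ref{cm:estimate}). Since (\ref{LLF}) is parabolically invariant under $(u,P,d)(x,t)\mapsto (\lambda u,\lambda^2 P,d)(\lambda x,\lambda^2 t)$ and the hypotheses (\ref{eps0}), (\ref{l3infinity}) are invariant under this rescaling, I reduce to $r_0=1$, $z_0=(0,0)$.

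For the first step, I test the local energy inequality (\ref{HLWLocEnergyIneq}) against $\phi^2$ for a cutoff $\phi\in C_0^\infty(P_1(0))$ with $\phi\equiv 1$ on $P_{3/4}(0)$. Each nonlinear term on the right is trilinear in $(u,\nabla d,P)$; H\"older's inequality together with the scale-invariant smallness (\ref{eps0}) dominates every one of them by $C\epsilon_0$, yielding a subcritical bound
\begin{equation*}
\sup_{t\in(-9/16,\,0)}\int_{B_{3/4}}(|u|^2+|\nabla d|^2)(\cdot,t)+\int_{P_{3/4}(0)}\!\!\left(|\nabla u|^2+|\Delta d+|\nabla d|^2 d|^2\right)\lesssim \epsilon_0.
\end{equation*}
To promote this into a bound on $\nabla^2 d$, I apply (\ref{w22constant}) on spatial sub-balls and use the splitting $\Delta d=(\Delta d+|\nabla d|^2 d)-|\nabla d|^2 d$ together with $\||\nabla d|^2\|_{L^2}\le \|\nabla d\|_{L^3}\|\nabla d\|_{L^6}\le \mathcal C(3)^{-1}\|\nabla d\|_{L^3}\|\nabla^2 d\|_{L^2}$ (valid after a standard cutoff/extension). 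The hypothesis (\ref{l3infinity}) makes the resulting coefficient of $\|\nabla^2 d\|_{L^2}$ on the right strictly less than $1$, so it absorbs, leaving $\|\nabla^2 d\|_{L^2(P_{3/4})}\lesssim \epsilon_0^{1/2}$ plus a trace term controlled by the $L^2 H^1$ bound just obtained.

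Next I insert cutoffs at a smaller scale $r<1$ into the local $L^3$-energy inequality (\ref{localEnergyInequality0}). The crucial feature is that its nonlinear right-hand side carries a prefactor $\bigl(\int_{\mathrm{spt}\,\phi}(|u|^3+|\nabla d|^3)\bigr)^{2/3}$, which by (\ref{eps0}) is $\lesssim \epsilon_0^2$. Taking $\epsilon_0$ small absorbs this factor into the gain $\int|\nabla(|u|^{3/2}\phi)|^2+|\nabla(|\nabla d|^{3/2}\phi)|^2$, and an iteration on dyadically shrinking cylinders $P_{\theta^k}(z)$, for a suitable $\theta\in(0,1)$, produces
\begin{equation*}
r^{-2}\int_{P_r(z)}(|u|^3+|\nabla d|^3)\;\le\;C\epsilon_0\,r^{\alpha},\qquad z\in P_{1/2}(0),\ \ 0<r\le \tfrac14,
\end{equation*}
for some $\alpha>0$. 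A parallel decay estimate for $P$ follows from (\ref{p-estimate}). Parabolic Campanato embedding then yields $u,\nabla d\in C^{\alpha/3,\alpha/6}(P_{1/4}(0))$.

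Once $u$ and $\nabla d$ are H\"older continuous on $P_{1/4}(0)$, the system (\ref{LLF}) becomes a pair of linear parabolic equations with H\"older coefficients and H\"older right-hand sides, the pressure being controlled via Calder\'on--Zygmund applied to (\ref{rieszPress}); interior Schauder estimates upgrade $(u,d)$ to $C^{2,\alpha}$, and bootstrapping by repeatedly differentiating the equations gives (\ref{cm:estimate}) for every $m$. The principal obstacle is the supercritical term $|\nabla d|^2 d$ in (\ref{LLF})$_3$: it has the exact scaling of $\Delta d$ and cannot be handled as a perturbation. The assumption (\ref{l3infinity}), with its precisely tuned constant $(1-\delta)/(\mathcal C(3)\mathcal D(3))$, is exactly what makes the $W^{2,2}$-absorption of $|\nabla d|^2 d$ possible; without it both the $L^2H^2$ control of $d$ in Step~2 and the Morrey iteration in Step~3 collapse.
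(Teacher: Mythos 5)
Your overall outline (subcritical $L^2H^1$ control $\to$ absorption of $|\nabla d|^2 d$ via (\ref{l3infinity}) $\to$ scale-invariant decay $\to$ bootstrap) is the right shape, and you have correctly identified why (\ref{l3infinity}) is needed: it is precisely what lets the $W^{2,2}$-estimate (\ref{w22constant}) absorb the critical term. However, your central mechanism for producing the Morrey decay is a genuine gap, and this is where the paper's argument and yours diverge in an essential way.

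The theorem is stated for \emph{suitable weak solutions} on a bounded domain $\Omega$. Lemma~\ref{local_ineq}, which you plan to iterate, is proved only for $(u,d)\in C^\infty(\R^3\times[0,T))\cap C([0,T),L^2\times\dot W^{1,2})$ defined on \emph{all of} $\R^3$; its derivation tests the equations against $|u|u\phi^2$ and $|\nabla d|\nabla d\,\phi^2$, which a suitable weak solution does not a priori admit, and its pressure input (Lemma~\ref{L3:pressure:lemma}) uses the global Riesz-transform representation $P=-\mathbf R_j\mathbf R_k(g^{jk})$, which is unavailable for a weak solution on $\Omega$. For a suitable weak solution, the only energy inequality you may use is the $L^2$-level inequality (\ref{HLWLocEnergyIneq}), together with the local pressure estimate of Lemma~\ref{pressure:est}. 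This is why the paper does not touch Lemma~\ref{local_ineq} in \S4: instead it proves a one-step decay lemma (Lemma~\ref{decayThm}) by a blow-up/compactness argument. The contradiction argument rescales $(u_i,P_i,d_i)$ by $\epsilon_i\to 0$, extracts limits via Aubin--Lions, and compares against the \emph{linear} Stokes and heat systems, whose interior regularity gives the decay (\ref{cacciopoli:est}). It is this comparison with the linearized system, not the Caccioppoli-type $L^3$-energy estimate, that generates the improvement from scale $r$ to scale $\theta_0 r$; no pure energy inequality by itself yields dyadic decay of the scale-invariant quantity, so even in the smooth setting your ``iteration on dyadically shrinking cylinders'' step is missing the mechanism.

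A second, smaller issue: even granting the decay $r^{-2}\int_{P_r}(|u|^3+|\nabla d|^3)\lesssim r^\alpha$ with the $\alpha\in(0,1)$ that comes out of iterating the decay lemma, this is a Morrey bound, not a Campanato (oscillation) bound, so parabolic Campanato embedding does not directly give H\"older continuity of $u$ and $\nabla d$. The paper instead converts the Morrey information into higher integrability via the Duhamel representation and Riesz potential estimates between parabolic Morrey spaces ($\mathcal I_1:M^{3/2,3(1-\alpha)}\to M^{3(1-\alpha)/(1-2\alpha),3(1-\alpha)}$), then lets $\alpha\uparrow\frac12$ to reach $L^q$ for all $q<\infty$, after which Schauder applies. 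If you want a Campanato route, you would need the decay lemma to control oscillations $\int_{P_r}|u-\bar u_r|^3$, which requires a separate argument.

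In summary: your Step~2 is essentially the paper's (4.21)--(4.25) and is sound; your Step~3 replaces the paper's blow-up decay lemma with an iteration of a lemma that does not apply to suitable weak solutions and would not, even if it applied, produce power decay by itself; and your Step~4 needs Riesz-potential-in-Morrey-space estimates (or an oscillation version of the decay lemma) rather than plain Campanato embedding.
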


\medskip
The crucial ingredient to prove Theorem \ref{smooth:solutions} is the  following decay lemma, 
which is analogous to that of the Navier-Stokes equations by \cite{lin1} and \cite{ESV}.
\begin{lemma}
  \label{decayThm} For any $\delta>0$, there exist $\epsilon_0 > 0$ and $\theta_0 \in (0,\frac{1}{2})$ such that
if $(u,P,d): \Omega\times (0,T)\to\mathbb R^3\times\mathbb R\times S^2$ is a suitable weak solution of (\ref{LLF}),
and satisfies, for $z_0=(x_0,t_0)\in\Omega\times (0,T)$ and $P_{r_0}(z_0)\subset\Omega\times (0, T)$,
both (\ref{eps0}) and (\ref{l3infinity}),
  then it holds that
  \begin{equation}
    \label{decay:ineq}
    \begin{aligned}
      &\left [
        \left ( (\theta_0 r_0)^{-2} \int_{P_{\theta_0r_0}(z_0)} |u|^3 \right )^{\frac{1}{3}}
        +\left ( (\theta_0 r_0)^{-2} \int_{P_{\theta_0r_0}(z_0)} |P|^{\frac{3}{2}} \right )^{\frac{2}{3}}
        +\left ( (\theta_0 r_0)^{-2} \int_{P_{\theta_0r_0}(z_0)} |\grad d|^3 \right )^{\frac{1}{3}}
      \right ] \\
      &\leq \frac{1}{2}
      \left [
        \left ( r_0^{-2} \int_{P_{r_0}(z_0)} |u|^3 \right )^{\frac{1}{3}}
        +\left ( r_0^{-2} \int_{P_{r_0}(z_0)} |P|^{\frac{3}{2}} \right )^{\frac{2}{3}}
        +\left ( r_0^{-2} \int_{P_{r_0}(z_0)} |\grad d|^3 \right )^{\frac{1}{3}}
      \right ].
    \end{aligned}
  \end{equation}
\end{lemma}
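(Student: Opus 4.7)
The plan is a blow-up/compactness argument in the spirit of Lin \cite{lin1} and Escauriaza-Seregin-\v{S}ver\'ak for the Navier-Stokes equation, adapted to the coupled system (\ref{LLF}). By parabolic scaling I would first reduce to $z_0=(0,0)$, $r_0=1$, so that the goal becomes decay from $P_1$ to $P_{\theta_0}$. Argue by contradiction: suppose for every $\theta\in(0,1/2)$ there exists a sequence of suitable weak solutions $(u_k,P_k,d_k)$ on $P_1$ satisfying \eqref{l3infinity} with
\[
\epsilon_k:=\Big(\int_{P_1}|u_k|^3\Big)^{1/3}+\Big(\int_{P_1}|P_k|^{3/2}\Big)^{2/3}+\Big(\int_{P_1}|\nabla d_k|^3\Big)^{1/3}\to 0,
\]
yet the corresponding quantity on $P_\theta$, rescaled by $\theta^{-2}$, is bounded below by $\epsilon_k/2$.

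The rescaling step is: set $\hat u_k=u_k/\epsilon_k$, $\hat P_k=P_k/\epsilon_k$, and $\hat d_k=(d_k-\bar d_k)/\epsilon_k$, where $\bar d_k\in S^2$ is a suitable reference constant (close to $d_k$ on $P_1$ since $\nabla d_k$ is small in $L^\infty_tL^3_x$). The triples $(\hat u_k,\hat P_k,\hat d_k)$ then satisfy a renormalized system in which all nonlinear terms carry a prefactor of $\epsilon_k$. By hypothesis, $\hat u_k$ and $\nabla\hat d_k$ are uniformly bounded in $L^3(P_1)$ and $\hat P_k$ in $L^{3/2}(P_1)$. The local energy inequality \eqref{HLWLocEnergyIneq} combined with the $W^{2,2}$-estimate \eqref{w22constant}, where the assumption \eqref{l3infinity} is used to \emph{absorb} the critical $|\nabla d|^2 d$ term via Sobolev embedding and H\"older (since $\|\,|\nabla d|^2 d\|_{L^2}\le \mathcal C(3)^{-1}\|\nabla d\|_{L^3}\|\nabla^2 d\|_{L^2}$), gives uniform $L^\infty_tL^2_x\cap L^2_tH^1_x$ bounds for $\hat u_k$ and $\nabla\hat d_k$ on $P_{3/4}$. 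Using the rescaled equations, $\partial_t\hat u_k$ and $\partial_t\hat d_k$ are uniformly bounded in a negative Sobolev space; Aubin-Lions then yields strong convergence $\hat u_k\to\hat u$ and $\nabla\hat d_k\to\nabla\hat d$ in $L^3(P_{1/2})$, and weak convergence $\hat P_k\rightharpoonup\hat P$ in $L^{3/2}(P_{1/2})$ after a harmless subtraction of a time-dependent constant handled by the decomposition in Lemma \ref{L3:pressure:lemma}.

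Passing to the limit, the prefactors $\epsilon_k$ kill all nonlinearities, so $(\hat u,\hat P,\hat d)$ solves the linear Stokes system and the heat equation on $P_{1/2}$. Standard interior regularity for these linear equations (Campanato-type decay for Stokes together with heat-kernel decay estimates) yields, for some $\alpha>0$ depending only on dimension and for all $\theta\in(0,1/4)$,
\[
\theta^{-2/3}\Big(\int_{P_\theta}|\hat u|^3\Big)^{1/3}+\theta^{-4/3}\Big(\int_{P_\theta}|\hat P|^{3/2}\Big)^{2/3}+\theta^{-2/3}\Big(\int_{P_\theta}|\nabla\hat d|^3\Big)^{1/3}\le C_0\,\theta^{\alpha}.
\]
Fixing $\theta_0\in(0,1/4)$ with $C_0\theta_0^{\alpha}<1/4$ then contradicts the lower bound coming from the failure hypothesis once $k$ is large, completing the argument and selecting the universal $\theta_0$ and $\epsilon_0$.

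The main obstacle is the strong compactness of the rescaled sequence, particularly for the pressure: one must use the local-plus-tail decomposition of Lemma \ref{L3:pressure:lemma} to identify the piece of $\hat P_k$ that converges strongly (via Calder\'on-Zygmund) versus the harmonic tail (controlled by the uniform $L^3_{\rm uloc}$-type bound on $(u_k,\nabla d_k)$). A secondary subtlety is the passage to the limit in the director equation: because $|d_k|=1$ constrains $d_k$ to $S^2$, the perturbative expansion around $\bar d_k$ and the prefactor $\epsilon_k$ together are what reduce the $|\nabla d|^2d$ term to something that vanishes in the limit, and the subcriticality in \eqref{l3infinity} is precisely what makes the absorption/compactness go through.
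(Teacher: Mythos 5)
Your proposal follows essentially the same contradiction/blow-up strategy as the paper: rescale around $z_0=(0,0)$, $r_0=1$, divide by $\epsilon_k$, use the local energy inequality together with the $W^{2,2}$-estimate and the smallness hypothesis \eqref{l3infinity} to absorb the critical $|\nabla d|^2 d$ term, invoke Aubin--Lions for strong $L^3$ compactness of the rescaled velocity and director gradient, pass to a limit solving the linear Stokes and heat equations, and conclude by linear decay plus a separate pressure estimate. The only slight imprecision is that the pressure decomposition used in this decay argument is Lemma~\ref{pressure:est} (local Calder\'on--Zygmund piece plus a harmonic tail controlled by Harnack on $B_{1/2}$), not Lemma~\ref{L3:pressure:lemma}, which serves the $L^3_{\mathrm{uloc}}$ a priori bound instead; the two lemmas share the same splitting structure, so this does not affect the substance of your argument.
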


\begin{proof}  By the invariance of (\ref{LLF}) under translations and parabolic dilations,
it suffices to consider the case that $z_0=(0,0)$ and $r_0=1$.
We will prove the Lemma by contradiction. Suppose that the conclusion were false. Then there would exist
$\delta_0>0$ such that for any $\theta\in (0,1)$ there are a sequence
of suitable weak solutions $(u_i, P_i, d_i)$ of (\ref{LLF}) in $P_1$, that satisfy
  \begin{equation}
    \label{epsContra1}
      \left (\int_{P_1} |u_i|^3 \right )^{\frac{1}{3}}
    + \left (\int_{P_1} |P_i|^{\frac{3}{2}} \right )^{\frac{2}{3}}
    + \left (\int_{P_1} |\grad d_i|^3 \right )^{\frac{1}{3}} = \epsilon_i \to 0,
  \end{equation}
\begin{equation}\label{l3infinity1}
\left\|\nabla d_i\right\|_{L^\infty_tL^3_x(P_1)}\le \frac{1-\delta_0}{\mathcal C(3)\mathcal D(3)},
\end{equation}
 and
  \begin{equation}
    \label{epsContra2}
    \begin{aligned}
      &\left [
        \left ( \theta^{-2} \int_{P_\theta} |u_i|^3 \right )^{\frac{1}{3}}
        +\left ( \theta^{-2} \int_{P_\theta} |P_i|^{\frac{3}{2}} \right )^{\frac{2}{3}}
        +\left ( \theta^{-2} \int_{P_\theta} |\grad d_i|^3 \right )^{\frac{1}{3}}
      \right ] \\
      &> \frac{1}{2}
      \left [
        \left (\int_{P_1} |u_i|^3 \right )^{\frac{1}{3}}
        +\left (\int_{P_1} |P_i|^{\frac{3}{2}} \right )^{\frac{2}{3}}
        +\left (\int_{P_1} |\grad d_i|^3 \right )^{\frac{1}{3}}
      \right ].
    \end{aligned}
  \end{equation}
  Now we define the blow-up sequence $(v_i, Q_i, e_i):P_1 \to \R^3 \times \R \times \R^3$ by
  \begin{equation*}
    v_i(z) = \frac{u_i(z)}{\epsilon_i}, \ Q_i(z) = \frac{P_i(z)}{\epsilon_i}, \ e_i(z) = \frac{d_i(z)-(d_i)_1}{\epsilon_i},
\ \ z\in P_1,
  \end{equation*}
  where $\displaystyle (d_i)_1 = \frac{1}{|P_1|}\int_{P_1}d_i$ is the average of $d_i$ over $P_1$. \\ 
Then $(v_i,Q_i,e_i)$ satisfy the following equations in $P_1$:
  \begin{equation}\begin{cases}
    \begin{aligned}
      \partial_t v_i - \lap v_i + \grad Q_i &= -\epsilon_i [v_i \cdot \grad v_i + \grad\cdot(\grad e_i \odot \grad e_i)], \\
      \grad \cdot v_i &= 0, \\
      \partial_t e_i - \lap e_i &= \epsilon_i [|\grad e_i|^2 d_i - v_i\cdot \grad e_i].
    \end{aligned}
\end{cases}
    \label{scaledEqs}
  \end{equation}
  It follows from \eqref{epsContra1} and (\ref{epsContra2}) that for any $\theta \in (0,\frac{1}{2})$,
  \begin{equation}
    \label{blowup}
    \left (\int_{P_1} |v_i|^3 \right )^{\frac{1}{3}}
    +\left (\int_{P_1} |Q_i|^{\frac{3}{2}} \right )^{\frac{2}{3}}
    +\left (\int_{P_1}|\nabla e_i|^{3} \right )^{\frac{1}{3}}
    = 1,
  \end{equation}
and
  \begin{equation}
    \label{blowup:ineq}
    \left (\theta^{-2}\int_{P_\theta} |v_i|^3 \right )^{\frac{1}{3}}
    +\left (\theta^{-2}\int_{P_\theta} |Q_i|^{\frac{3}{2}} \right )^{\frac{2}{3}}
    +\left (\theta^{-2}\int_{P_\theta} |\nabla e_i|^{3} \right )^{\frac{1}{3}}
    >\frac{1}{2}.
  \end{equation}
Applying the $W^{2,1}_{\frac32}$-estimate to the equation (\ref{scaledEqs})$_3$, we have
that $\nabla^2 e_i\in L^\frac32(P_{\frac78})$ and
\begin{equation}\label{21-estimate}
\left\|\nabla^2 e_i\right\|_{L^\frac32(P_\frac78)}\lesssim\left (\|v_i\|_{L^3(P_1)}^2+\|\nabla e_i\|_{L^3(P_1)}^2\right)
\leq C.
\end{equation}
By the Fubini Theorem and (\ref{21-estimate}), we may assume that
\begin{equation} \label{fubini}
\int_{\partial B_\frac34\times [-(\frac34)^2, 0]}|\nabla^2 e_i|^\frac32
\le C\int_{P_\frac78}|\nabla^2 e_i|^\frac32\le C.
\end{equation}

Since $(u_i,Q_i,d_i)$ satisfies \eqref{HLWLocEnergyIneq} in $P_1$,  by choosing suitable test functions $\phi$
we have that
  \begin{equation}
    \label{loc:ener:blowup:ineq}
    \begin{aligned}
      &\sup_{-\left (\frac{3}{4} \right)^2 \leq t \leq 0} \int_{B_{\frac34}} (|u_i|^2 + |\grad d_i|^2)
      + \int_{P_{\frac34}} (|\grad u_i|^2 + |\lap d_i + |\grad d_i|^2d_i|^2) \\
      &\qquad\qquad\leq C \int_{P_1}(|u_i|^2 + |\grad d_i|^2) + (|Q_i| + |u_i|^2  + |\grad d_i|^2)|u_i|.
    \end{aligned}
  \end{equation}
  Rescaling \eqref{loc:ener:blowup:ineq}, applying \eqref{blowup}, and using the H\"older inequality,  we have
  \begin{equation}
    \label{loc:ener:blowup:ineq:rescaled}
    \begin{aligned}
      &\sup_{-\left (\frac{3}{4} \right)^2 \leq t \leq 0} \int_{B_{\frac34}} (|v_i|^2 + |\grad e_i|^2)
      + \int_{P_{\frac34}} (|\grad v_i|^2 + \left |\lap e_i + \epsilon_i |\grad e_i|^2d_i \right |^2) \\
      &\qquad\qquad\leq C \int_{P_1}(|v_i|^2 + |\grad e_i|^2) + (|Q_i| + \epsilon_i|v_i|^2  + \epsilon_i|\grad e_i|^2)|v_i|
      \leq C.
    \end{aligned}
  \end{equation}
 By the $W^{2,2}$-estimate (\ref{w22constant}) and the Sobolev inequality,   we have
  \begin{equation*}
\begin{aligned}
    \int_{B_{\frac34}} |\grad^2 e_i|^2 &\leq \mathcal D^2(3)
\int_{B_{\frac34}} |\lap e_i|^2+C\|\nabla e_i\|_{W^{\frac12, 2}(\partial B_\frac34)}^2\\
&\leq \mathcal D^2(3)
\int_{B_{\frac34}} |\lap e_i|^2+C\|\nabla^2 e_i\|_{W^{2, \frac32}(\partial B_\frac34)}^2,
\end{aligned}
  \end{equation*}
so that, by integrating over $\displaystyle t\in [-(\frac34)^2,0]$ and applying (\ref{fubini}), it holds that
  \begin{eqnarray}
   \int_{P_{\frac34}} |\grad^2 e_i|^2
&\leq&
 \mathcal D^2(3)\int_{P_{\frac34}} |\lap e_i|^2+C\int_{-(\frac34)^2}^0
\|\nabla^2 e_i\|_{W^{2, \frac32}(\partial B_\frac34)}^2\nonumber\\
&\le& C+\mathcal D^2(3)\int_{P_{\frac34}} |\lap e_i|^2. \label{w22ineq20}
  \end{eqnarray}
By the point-wise identity
$\displaystyle|\lap e_i|^2 =|\lap e_i +\epsilon_i |\grad e_i|^2d_i|^2 + \epsilon_i^2|\grad e_i|^4,$
we have
  \begin{equation}\label{4-22}
      \int_{P_{\frac34}} |\Delta e_i|^2=
      \int_{P_{\frac34}} |\lap e_i + \epsilon_i|\grad e_i|^2d_i|^2
      + \epsilon_i^2\int_{P_{\frac34}} |\grad e_i|^4.
  \end{equation}
 By the H\"older inequality, the Young inequality, and the Sobolev inequality, we have
  \begin{eqnarray}
  &&\|\grad e_i\|_{L^4(B_\frac34)}^4\leq\|\nabla e_i\|_{L^3(B_\frac34)}^2
\|\nabla e_i\|_{L^6(B_\frac34)}^2\nonumber\\
&&\leq\|\nabla e_i\|_{L^3(B_\frac34)}^2\Big(
\|\nabla e_i-(\nabla e_i)_{\frac34}\|_{L^6(B_\frac34)}^2+\|(\nabla e_i)_{\frac34}\|_{L^6(B_\frac34)}\Big)^2\nonumber\\
&&\leq(1+\delta_0)^2\mathcal C^2(3) \|\grad e_i\|_{L^3(B_\frac34)}^2\|\grad^2 e_i\|_{L^2(B_\frac34)}^2
+C(\delta_0)\|\nabla e_i\|_{L^3(B_\frac34)}^2\|\nabla e_i\|_{L^2(B_\frac34)}^2, \label{w22ineq10}
  \end{eqnarray}
where $(\nabla e_i)_{\frac34}$ is the average of $\nabla e_i$ over $B_\frac34$.
Integrating  (\ref{w22ineq10}) over $\displaystyle t\in [-(\frac34)^2, 0] $ yields
  \begin{eqnarray}
    \label{w22ineq:2}
    \epsilon_i^2\int_{P_\frac34} |\grad e_i|^4
    &\leq& (1+\delta_0)^2\mathcal C^2(3)\|\grad d_i \|_{L^\infty_tL^3_x(P_\frac34)}^2
    \int_{P_\frac34} |\grad^2 e_i|^2 \nonumber\\
&&+C(\delta_0)(\sup_{-(\frac34)^2\le t\le 0}\int_{B_\frac34}|\nabla d_i|^2)\|\nabla e_i\|_{L^3(P_\frac34)}^2\nonumber\\
&\leq & C(\delta_0)+(1+\delta_0)^2\mathcal C^2(3)\|\grad d_i \|_{L^\infty_tL^3_x(P_\frac34)}^2
    \int_{P_\frac34} |\grad^2 e_i|^2.
  \end{eqnarray}
  Inserting the estimate \eqref{w22ineq:2} first into \eqref{4-22} and then (\ref{w22ineq20}), we obtain
  \begin{eqnarray}
    \label{w22ineq:3}
   && \left [1 - (1+\delta_0)^2\mathcal{C}^2(3)\mathcal D^2(3)\|\grad d_i \|_{L^\infty_tL^3_x(P_1)}^2\right]
    \int_ {P_{\frac34}}|\grad^2 e_i|^2 \nonumber\\
   && \leq  C(\delta_0)+C
    \int_{P_\frac34} |\lap e_i + \epsilon_i|\grad e_i|^2d_i|^2\le C(\delta_0).
  \end{eqnarray}
  Therefore, by applying \eqref{loc:ener:blowup:ineq:rescaled} to \eqref{w22ineq:3},  we have
  \begin{equation}
    \label{w22ineq:4}
    \int_{P_\frac34} |\grad^2 e_i|^2 \leq C(\delta_0).
  \end{equation}
  Combining the estimates \eqref{loc:ener:blowup:ineq:rescaled} and \eqref{w22ineq:4},
  we obtain
  \begin{equation}
    \int_{P_\frac12} |Q_i|^{\frac{3}{2}}
    + \sup_{t\in [-\frac14, 0]}\int_{B_\frac12} (|v_i|^2 + |\grad e_i|^2)
    +\int_{P_\frac12} (|\grad v_i|^2 + |\grad^2 e_i|^2) \leq C.
    \label{PhalfEnergy}
  \end{equation}
  We may assume, after taking possible subsequences, that
  \begin{equation*}
\begin{cases}
    \begin{aligned}
      &Q_i \to Q \text{ weakly in } L^{\frac{3}{2}}(P_{\frac{1}{2}}), \\
      &v_i \to v \text{ strongly in } L^2(P_\frac12),\ \grad v_i \to \grad v \text{ weakly in } L^2(P_{\frac{1}{2}}), \\
      &e_i \to e\ {\rm{and}}\ \grad e_i \to \grad e \text{ strongly in } L^2(P_\frac12), \ \grad^2 e_i \to \grad^2 e \text{ weakly in }
L^2(P_{\frac{1}{2}}).
    \end{aligned}
\end{cases}
  \end{equation*}
  Sending $i$ to $\infty$ in the equation \eqref{scaledEqs} yields that $(v,Q,e)$ satisfies in $P_\frac12$
  \begin{equation}\label{limit_eqn}
\begin{cases}
    \begin{aligned}
      \partial_t v - \lap v + \grad Q &= 0, \\
      \grad \cdot v &= 0, \\
      \partial_t e- \lap e &= 0.
    \end{aligned}
\end{cases}
  \end{equation}
  Using the Sobolev inequality and interpolations, we see that \eqref{PhalfEnergy} gives
  \begin{equation}\label{vqe_bound}
    \int_{P_\frac12} |v|^3 + |Q|^{\frac{3}{2}} + |\grad e|^{3} \leq C.
  \end{equation}
Hence, by the standard estimates on the linear Stokes equation and the heat equation, we have
that for any $\theta \in (0,\frac12)$, it holds
  \begin{equation}
    \label{cacciopoli:est}
 \theta^{-2} \int_{P_\theta} (|v|^3 + |\grad e|^3)
  \leq C  \theta^{3} \int_{P_\frac12}(|v|^3 +|\grad e|^3)\le C\theta^3,\
\ \theta^{-2}\int_{P_\theta}|Q|^\frac32 \le C\theta\int_{P_\frac12}|Q|^\frac32\le C\theta.
  \end{equation}
In order to reach a contradiction, we need to show that $(v_i, Q_i, e_i)$ converges  to $(v,Q,e)$ strongly in
$L^3(P_\frac25)$. To do so, we  recall the following Lemma (see \cite{temam}).
  \begin{lemma}\label{aubin}
    Let $X_0 \subset X \subset X_1$ be Banach spaces such that $X_0$ is compactly embedded in $X$, $X$ is continuously embedded
    in $X_1$, and $X_0$, $X_1$ are reflexive.  Then for $1 < \alpha_0, \alpha_1 < \infty$,
    \begin{equation*}
      \Big\{u \in L^{\alpha_0}(0,T;X_0) : \ \partial_t u \in L^{\alpha_1}(0,T;X_1) \Big \}
      \text{ is compactly embedded in }
      L^{\alpha_0}(0,T;X).
    \end{equation*}
  \end{lemma}

Now we have the following claims.\\
{\it Claim} 1.  $v_i \to v$ strongly in $L^2(P_{\frac25})$.
  From \eqref{PhalfEnergy} and interpolation inequalities, we have
  \begin{equation*}\begin{cases}
    \label{bounds:v_i:grade_i}
    \begin{aligned}
      &\|v_i\|_{L^{\frac{10}3}(P_{\frac12})}+\|v_i\|_{L^\infty_tL^2_x(P_\frac12)}+\|\nabla v_i\|_{L^2(P_{\frac12})}  \leq C, \\
      &\|\nabla e_i\|_{L^{\frac{10}3}(P_\frac12)}+\|\nabla e_i\|_{L^\infty_tL^2_x(P_\frac12)}
+\|\grad^2 e_i\|_{L^2(P_{\frac12})}  \leq C.
    \end{aligned}
\end{cases}
  \end{equation*}
  So by the H\"older inequality, we have
  \begin{equation*}
\begin{cases}
    \begin{aligned}
      \int_{P_{\frac12}} |v_i \cdot \grad v_i|^{\frac 54}
      &\leq \left (\int_{P_{\frac12}} |v_i |^{\frac{10}3} \right )^{\frac38} \left (\int_{P_{\frac12}} |\grad v_i|^{2} \right )^{\frac58} \leq C ,\\
      \int_{P_{\frac12}} |\grad \cdot (\grad e_i \odot \grad e_i)|^{\frac54}
      &\leq \left ( \int_{P_{\frac12}} |\grad^2 e_i|^{2}\right )^{\frac58}
\left (\int_{P_{\frac12}}|\grad e_i|^{\frac{10}3} \right)^{\frac38} \leq C.
    \end{aligned}
\end{cases}
  \end{equation*}
  These inequalities imply
  \begin{equation}
    \label{stokesRHS}
    \Big\|
      \epsilon_i \left [v_i \cdot \grad v_i + \grad \cdot (\grad e_i \odot \grad e_i) \right ] \Big\|_{L^{\frac54}(P_{\frac12})} \leq C.
  \end{equation}
  By \eqref{stokesRHS} and the $W^{2,1}_{\alpha}$-estimate of the  linear Stokes equation, we have 
  \begin{equation}
    \label{bound:partialv_i}
    \Big\|\partial_t v_i \Big\|_{L^{\frac54}(P_{\frac25})} \leq C.
  \end{equation}
Hence $\{v_i\}$ is bounded in
  \begin{equation*}
    \mathbf{X}_1 = \left \{u \in L_t^2H_x^1(P_{\frac 25})\  :\  \partial_t u \in L_t^{\frac54} L_x^{\frac54}(P_{\frac25}) \right \}.
  \end{equation*}
Since $\mathbf{X}_1$ is compactly embedded in $L^2_t L^2_x(P_{\frac 25})$ by Lemma \ref{aubin}, we conclude that
$v_i \to v \text{ strongly in } L^2(P_{\frac25})$.

\smallskip
 \noindent {\it Claim} 2.  $\grad e_i \to \grad e$ strongly in $L^2(P_{\frac25})$.
Using \eqref{bounds:v_i:grade_i} and the H\"older inequality we have
  \begin{equation*}
    \begin{aligned}
      \left\|v_i \cdot \grad e_i \right\|_{L^{\frac{20}{11}}(P_{\frac 12})}
\leq \left\|v_i\right\|_{L^{\frac{10}3}(P_{\frac12})}\left\|\grad e_i\right\|_{L^4(P_{\frac12})}  \leq C,
    \end{aligned}
  \end{equation*}
so that
  \begin{equation}
    \label{heatRHS:bound}
    \left \| |\grad e_i|^2d_i + v_i\cdot \grad e_i \right \|_{L^{\frac{20}{11}}(P_{\frac12})} \leq C.
  \end{equation}
Hence the $W^{2,1}_{\alpha}$-estimate for the heat equation implies
  \begin{equation}
    \label{bound:partialgrade_i}
    \left\|\partial_t \grad e_i \right\|_{L^{\frac{20}9}_tW^{-1,\frac{20}9}_x(P_{\frac25})} \leq C.
  \end{equation}
  By \eqref{bounds:v_i:grade_i} and \eqref{bound:partialgrade_i}, we have $\{\grad e_i\}$ is bounded in
  \begin{equation*}
    \mathbf{X}_2 = \left \{u \in L_t^2H_x^1(P_{\frac25})\  :\  \partial_t u \in L_t^{\frac{20}9} W_x^{-1,\frac{20}9}(P_{\frac25}) \right \},
  \end{equation*}
  and so by Lemma \ref{aubin}, we have that $\grad e_i \to \grad e \text{ strongly in } L^2(P_{\frac25}).$
  It is easy to see that by interpolations, the claims imply that
  \begin{equation}
    \label{strongL3:conv}
    v_i \to v,\  \grad e_i \to \grad e \text{ strongly in } L^3(P_{\frac25}).
  \end{equation}
  From \eqref{strongL3:conv} and (\ref{cacciopoli:est}), we conclude that for any $\theta \in (0,\frac14)$ and $i$ sufficiently large,
  \begin{equation}
    \label{v:grad:e:L3:control}
    \theta^{-2} \int_{P_\theta} |v_i|^3 + |\grad e_i|^3
    \leq \theta^{-2} \int_{P_\theta} |v|^3 + |\grad e|^3 + o(1) \leq C\theta^3.
  \end{equation}
  Finally using the estimate \eqref{pressure:est:1} below, with $\tau = \theta$ and $r =\frac12$, we have
that for any $0<\theta<\frac14$,
  \begin{equation*}
    \theta^{-2} \int_{P_{\theta}} |P_i|^{\frac32}
    \leq C
    \left [
      \theta^{-2} \int_{P_\frac12} (|u_i|^3 + |\grad d_i|^3) +\theta \int_{P_\frac12}|P_i|^{\frac32}
      \right ].
  \end{equation*}
After scaling, this implies that for any $0<\theta<\frac14$,
\begin{equation}\label{qi-estimate}
 \theta^{-2} \int_{P_{\theta}} |Q_i|^{\frac32}
   \leq C
    \left [
      \theta^{-2} \epsilon_i^\frac32\int_{P_\frac12} (|v_i|^3 + |\grad e_i|^3) +\theta \int_{P_\frac12}|Q_i|^{\frac32}
      \right ]
\leq C(\epsilon_i^\frac32\theta^{-2}+\theta).
  \end{equation}
  Combining \eqref{v:grad:e:L3:control} and \eqref{qi-estimate}, we have that for sufficiently large $i = i(\theta)$,
  \begin{equation*}
  \theta^{-2} \int_{P_{\theta}} (|v_i|^3 + |\grad e_i|^3 + |Q_i|^{\frac32}) \leq C\theta.
  \end{equation*}
 This contradicts \eqref{blowup:ineq}, if we choose $\theta\in (0,\frac14)$ sufficiently small.
\end{proof}

\renewcommand{\div}{\operatorname{div}}
\renewcommand{\tilde}{\widetilde}
The next Lemma gives the estimate of pressure function, which is needed in the proof of Lemma \ref{decayThm}.
\begin{lemma}
  \label{pressure:est}
   Suppose that $(u,P,d)$ is a suitable weak solution of (\ref{LLF}) on $P_1$.
   Then for any $0<r\le 1$ and $\tau \in (0,\frac{r}{2})$, it holds that
   \begin{equation}
     \label{pressure:est:1}
     \frac{1}{\tau^2} \int_{P_\tau} |P|^{\frac32}
     \leq C
     \left [
       \left ( \frac{r}{\tau} \right )^2 \frac{1}{r^2}
       \int_{P_r} (|u - u_{r}(t)|^3 + |\grad d|^3)
       + \left ( \frac{\tau}{r} \right ) \frac{1}{r^2}
       \int_{P_r} |P|^{\frac32}
     \right ],
   \end{equation}
   where $\displaystyle u_{r}(t) = \frac{1}{|B_r|} \int_{B_r} u(x,t) $ for $- r^2 \leq t \leq 0$.
   In particular, it holds that
   \begin{equation}
     \label{pressure:est:2}
     \begin{aligned}
       \frac{1}{\tau^2} \int_{P_\tau} |P|^{\frac32}
       &\leq C
       \left ( \frac{r}{\tau} \right )^2
       \left ( \sup_{- r^2\leq t \leq 0} \frac{1}{r} \int_{B_r} |u|^2 \right )^{\frac34}
       \left ( \frac{1}{r} \int_{P_r} |\grad u|^2 \right )^{\frac34} \\
       &+C
       \left [
         \left ( \frac{r}{\tau} \right )^2 \frac{1}{r^2}
         \int_{P_r} |\grad d|^3
         +\left ( \frac{\tau}{r} \right ) \frac{1}{r^2}
         \int_{P_r} |P|^{\frac32}
       \right ].
     \end{aligned}
   \end{equation}
\end{lemma}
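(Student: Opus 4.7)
The plan is to decompose the pressure at each time slice into a singular-integral part (handled by Riesz transforms on $L^{3/2}$) and a harmonic remainder (handled by interior mean value estimates), using $\nabla\cdot u=0$ to replace $u$ by $u-u_r(t)$ in the source of the pressure Poisson equation. Fix $t\in(-r^2,0]$, and observe that since $u_r(t)$ is a constant vector,
\begin{equation*}
\partial_i\partial_j(u^iu^j)=\partial_i\partial_j\bigl((u^i-u_r^i(t))(u^j-u_r^j(t))\bigr)
\end{equation*}
in the distributional sense, so with $g^{ij}(t):=(u^i-u_r^i(t))(u^j-u_r^j(t))+\partial_id\cdot\partial_jd$ we have $-\Delta P=\partial_i\partial_j g^{ij}$ on $B_r$. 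Define
\begin{equation*}
P_1(x,t):=-\mathbf R_i\mathbf R_j\bigl[\chi_{B_r}g^{ij}(t)\bigr](x),\qquad P_2:=P-P_1.
\end{equation*}
Then $P_2(\cdot,t)$ is harmonic in $B_r$.

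For $P_1$, the $L^{3/2}$-boundedness of the Riesz transforms (as in the proof of Lemma~\ref{L3-inequality}) together with $|g|^{3/2}\lesssim|u-u_r|^3+|\grad d|^3$ yields
\begin{equation*}
\int_{P_\tau}|P_1|^{3/2}\le\int_{-\tau^2}^0\!\!\int_{\R^3}|P_1|^{3/2}\lesssim\int_{P_r}\bigl(|u-u_r(t)|^3+|\grad d|^3\bigr),
\end{equation*}
which after dividing by $\tau^2$ produces the $(r/\tau)^2$ contribution. For $P_2$, harmonicity and the mean-value inequality give $\|P_2(t)\|_{L^\infty(B_\tau)}^{3/2}\lesssim r^{-3}\int_{B_r}|P_2(t)|^{3/2}$ for $\tau\le r/4$, so
\begin{equation*}
\int_{P_\tau}|P_2|^{3/2}\lesssim\tau^3 r^{-3}\int_{P_r}|P_2|^{3/2},\qquad\frac{1}{\tau^2}\int_{P_\tau}|P_2|^{3/2}\lesssim\frac{\tau}{r}\cdot\frac{1}{r^2}\int_{P_r}|P_2|^{3/2}.
\end{equation*}
Finally the triangle inequality $|P_2|\le|P|+|P_1|$ bounds $\int_{P_r}|P_2|^{3/2}$ by $\int_{P_r}|P|^{3/2}$ plus the Riesz-transform term, and the latter is absorbed into the $(r/\tau)^2$ piece since $\tau/r\le(r/\tau)^2$. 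This produces \eqref{pressure:est:1}; the range $\tau\in(r/4,r/2)$ of \eqref{pressure:est:1} is handled trivially by the same bound with $(\tau/r)\asymp 1$.

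For \eqref{pressure:est:2}, I will estimate $\int_{P_r}|u-u_r(t)|^3$ by Sobolev--Poincar\'e and interpolation: since $\|u-u_r(t)\|_{L^6(B_r)}\lesssim\|\grad u(t)\|_{L^2(B_r)}$ and
\begin{equation*}
\|u-u_r(t)\|_{L^3(B_r)}^3\le\|u(t)\|_{L^2(B_r)}^{3/2}\|\grad u(t)\|_{L^2(B_r)}^{3/2},
\end{equation*}
H\"older in time (using $\|f\|_{L^{3/2}_t(-r^2,0)}\le r^{1/2}\|f\|_{L^2_t}^{3/4}$) gives
\begin{equation*}
\frac{1}{r^2}\int_{P_r}|u-u_r(t)|^3\lesssim\left(\frac{1}{r}\sup_{-r^2\le t\le 0}\int_{B_r}|u|^2\right)^{3/4}\left(\frac{1}{r}\int_{P_r}|\grad u|^2\right)^{3/4},
\end{equation*}
which combined with \eqref{pressure:est:1} yields \eqref{pressure:est:2}.

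The main obstacle is getting the correct $\tau/r$ decay for the harmonic part; this is sharp and depends crucially on extracting the mean $u_r(t)$ so that the Calder\'on--Zygmund bound on $P_1$ is expressed in terms of $|u-u_r|^3+|\grad d|^3$ rather than $|u|^3+|\grad d|^3$. A secondary technical point is that $P$ is defined only modulo an additive (time-dependent) constant, but the decomposition $P=P_1+P_2$ specifies $P_1$ unambiguously via Riesz transforms of a compactly supported source, and any constant ambiguity is absorbed harmlessly into the harmonic remainder $P_2$ since constants are harmonic and contribute identically to both sides of the estimate.
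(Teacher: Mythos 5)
Your proof is correct and follows essentially the same route as the paper: replace $u$ by $u-u_r(t)$ in the pressure Poisson equation using $\nabla\cdot u=0$, split $P$ at each time slice into a Calder\'on--Zygmund piece (bounded in $L^{3/2}$) and a remainder that is harmonic and hence controlled by the sub-mean-value property of $|P_2|^{3/2}$, then integrate in time and finally estimate $\int_{B_r}|u-u_r|^3$ via Sobolev--Poincar\'e interpolation and H\"older in time. The only differences are cosmetic: you localize with $\chi_{B_r}$ rather than the paper's smooth cutoff $\eta^2$ (which actually makes $P_2$ harmonic on all of $B_r$ instead of only $B_{r/2}$, so your mean-value step covers the full range $\tau\le r/2$ directly and the separate treatment of $\tau\in(r/4,r/2)$ is unnecessary), and you write the singular-integral part via Riesz transforms where the paper writes it via the Hessian of the Newtonian kernel against $\eta^2 g$; these are the same operator.
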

\begin{proof} By scaling, it suffices to consider the case $r = 1$.  Using the equation (\ref{LLF})$_2$, we have
  \begin{equation*}
    \begin{aligned}
      \div \div \left [(u - u_{1}(t)) \otimes (u-u_{1}(t)) \right]
      &= \grad_j \grad_i \left ((u - u_{1}(t))^i (u-u_{1}(t))^j \right )\\
      &= \grad_j \left ((u - u_{1}(t))^i \grad_i (u-u_{1}(t))^j \right)= \grad_j \left ((u - u_{1}(t))^i \grad_i u^j \right)\\
      &= \grad_j (u - u_{1}(t))^i \grad_i u^j
      + (u - u_{1}(t))^i \grad_i \grad_j u^j\\
      &= (\grad_j u^i) (\grad_i u^j)
      = \grad_j \grad_i (u^iu^j)
      = \div \div (u \otimes u).
    \end{aligned}
  \end{equation*}
  Taking the divergence of \eqref{LLF}$_1$, this yields
  \begin{equation}\label{p-equation}
    \begin{aligned}
      \lap P
      &= - \div \div \left [(u - u_{1}(t)) \otimes (u - u_{1}(t)) + \grad d \odot \grad d \right ].
    \end{aligned}
  \end{equation}
  Let $\eta \in C_0^{\infty}(\R^3)$ be a cut-off function of $B_{\frac12}$, i.e. $0\leq \eta \leq 1$, $\eta \equiv 1$
  on $B_{\frac12}$, $\eta \equiv 0$ outside $B_1$, and $|\grad \eta| \leq C$.  Define $\tilde{P}$ by
  \begin{equation*}
    \tilde{P}(x,t)
    = - \int_{\R^3} \grad_y^2G(x-y):\eta^2(y)
    \left ((u - u_{1}(t)) \otimes (u - u_{1}(t)) + \grad d \odot \grad d \right)(y,t),
  \end{equation*}
  where $G$ is the fundamental solution of the Laplace equation on $\R^3$.  We have
  \begin{equation*}
    \begin{aligned}
      \lap \tilde{P} = \div \div \left ((u - u_{1}(t)) \otimes (u - u_{1}(t)) + \grad d \odot \grad d \right)\ \ {\rm{in}}\ \ \mathbb R^3.
    \end{aligned}
  \end{equation*}
  By the Calderon-Zygmund $L^p$-theory we have
  \begin{equation*}
    \begin{aligned}
      \int_{B_\tau} |\tilde{P}(t)|^{\frac32}
      &\leq \int_{\R^3} |\tilde{P}(t)|^{\frac32}
      \lesssim \int_{\R^3} \eta^3 \left |(u - u_{1}(t)) \otimes (u - u_{1}(t)) + \grad d \odot \grad d \right|^{\frac32} \\
      &\lesssim \int_{B_1}  \left ( |u - u_{1}(t)|^{3} + |\grad d|^3 \right).
    \end{aligned}
  \end{equation*}
  Integrating this inequality over $t \in (-\tau^2,0)$ yields
  \begin{equation}
    \frac{1}{\tau^2} \int_{P_\tau}|\tilde{P}|^{\frac32} \leq \frac{C}{\tau^2} \int_{P_1}(|u - u_{1}(t)|^3+|\grad d|^3).
    \label{tildepineq}
  \end{equation}
  Since the function $Q:= P - \tilde{P} \in L^{\frac32}(P_1)$ satisfies
  \begin{equation*}
    \lap Q(t) = 0\ \  \text{ in }\ \  B_{\frac12}, \ \forall\  t\in [-\frac14,0],
  \end{equation*}
  we have by the Harnack inequality that for any $0<\tau<\frac12$, 
  \begin{equation*}
    \begin{aligned}
      \frac{1}{\tau^2}\int_{B_\tau} |Q|^{\frac32}
 &\leq C \tau \int_{B_{\frac12}} |Q|^{\frac32}
   \leq C \tau \left [ \int_{B_{1}} |P|^{\frac32} +  \int_{B_{1}} |\tilde{P}|^{\frac32} \right ]  \\
      &\leq C \tau \left [ \int_{B_{1}} |P|^{\frac32} +  \int_{B_{1}} |u-u_{1}(t)|^{3}+|\grad d|^3 \right ].
    \end{aligned}
  \end{equation*}
  Integrating this inequality over $t \in [-\tau^2,0]$ implies
  \begin{equation}
    \label{qineq}
    \frac{1}{\tau^2} \int_{P_\tau} |Q|^{\frac32}
    \leq C\tau \left[ \int_{P_1} |P|^{\frac32} + \int_{P_1} |u-u_{1}(t)|^{3}+|\grad d|^3 \right ].
  \end{equation}
  It is now readily seen that \eqref{pressure:est:1} follows by adding the inequalities \eqref{tildepineq} and \eqref{qineq}.
  Using interpolation and the Sobolev inequality, we have
  \begin{equation}
    \label{sobolev:est:average}
    \begin{aligned}
      \int_{B_1} |u - u_{1}|^3
      &\leq  C\left ( \int_{B_1} |u|^2 \right )^{\frac{3}{4}}
     \left ( \int_{B_1} |\grad u|^2 \right )^{\frac{3}{4}}.
    \end{aligned}
  \end{equation}
Inserting \eqref{sobolev:est:average} into \eqref{pressure:est:1} yields \eqref{pressure:est:2}.
\end{proof}

Continuing to iterate the above process, we have
\begin{cor}
\label{iteratedEst}
Under the same assumptions as Lemma \ref{decayThm}, there exists $\alpha \in (0,1)$ such that for any
$z_1\in P_{\frac{r_0}2}(z_0)$ and $0 < \tau < r<\frac{r_0}2$,
it holds 
\begin{equation}
  \label{iter:decay:ineq}
  \begin{aligned}
    &\left ( \frac{1}{\tau^2} \int_{P_\tau (z_1)} |u|^3 \right )^{\frac13}
    + \left ( \frac{1}{\tau^2} \int_{P_\tau(z_1)} |P|^{\frac32} \right )^{\frac23}
    + \left ( \frac{1}{\tau^2} \int_{P_\tau(z_1)} |\grad d|^{3} \right )^{\frac13} \\
    &\leq
    \left ( \frac{\tau}{r} \right )^\alpha
    \left [
      \left ( \frac{1}{r^2} \int_{P_r(z_1)} |u|^3 \right )^{\frac13}
    + \left ( \frac{1}{r^2} \int_{P_r(z_1)} |P|^{\frac32} \right )^{\frac23}
    + \left ( \frac{1}{r^2} \int_{P_r(z_1)} |\grad d|^{3} \right )^{\frac13}
    \right ].
  \end{aligned}
\end{equation}
\end{cor}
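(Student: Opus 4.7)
The plan is to iterate Lemma \ref{decayThm} at the dyadic scales $\theta_0^k r$ and then convert the resulting geometric decay into the polynomial decay $(\tau/r)^\alpha$. To that end, introduce the scale-invariant quantity
\begin{equation*}
\Phi(z, \rho) := \Big(\rho^{-2}\int_{P_\rho(z)} |u|^3\Big)^{\frac13} + \Big(\rho^{-2}\int_{P_\rho(z)} |P|^{\frac32}\Big)^{\frac23} + \Big(\rho^{-2}\int_{P_\rho(z)} |\grad d|^3\Big)^{\frac13},
\end{equation*}
so that (\ref{eps0}) reads $\Phi(z_0,r_0)\leq\epsilon_0$ and the conclusion of Lemma \ref{decayThm} reads $\Phi(z_1,\theta_0 r)\leq \tfrac12\Phi(z_1,r)$. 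For $z_1 \in P_{r_0/2}(z_0)$ and $\rho \in (0, r_0/2]$, a direct spatial-temporal inclusion gives $P_\rho(z_1)\subset P_{r_0}(z_0)$; thus the bound (\ref{l3infinity}) on $\|\grad d\|_{L^\infty_t L^3_x}$ is inherited at $(z_1,\rho)$ by monotonicity of the norm under restriction, while $\Phi(z_1,\rho)\leq C\Phi(z_0,r_0)\leq C\epsilon_0$, so, after decreasing $\epsilon_0$ by a universal factor, the smallness hypothesis (\ref{eps0}) is inherited as well.

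I would then iterate Lemma \ref{decayThm} starting at $(z_1,r)$ for any $r \in (0, r_0/2]$. After one application, $\Phi(z_1,\theta_0 r)\leq\tfrac12\Phi(z_1,r)\leq\epsilon_0$, and the $L^\infty_t L^3_x$ bound is preserved under restriction to $P_{\theta_0 r}(z_1)$, so both hypotheses persist. By induction on $k$,
\begin{equation*}
\Phi(z_1,\theta_0^k r)\leq 2^{-k}\,\Phi(z_1,r), \qquad k=0,1,2,\ldots.
\end{equation*}
For arbitrary $0<\tau<r\leq r_0/2$, pick the unique $k\geq 0$ with $\theta_0^{k+1}r<\tau\leq\theta_0^k r$. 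The inclusion $P_\tau(z_1)\subset P_{\theta_0^k r}(z_1)$ together with a comparison of the $\rho^{-2}$ prefactors (the worst power being $(\theta_0^k r/\tau)^{4/3}\leq\theta_0^{-4/3}$, from the pressure term) yields
\begin{equation*}
\Phi(z_1,\tau)\leq \theta_0^{-4/3}\,\Phi(z_1,\theta_0^k r)\leq \theta_0^{-4/3}\cdot 2^{-k}\,\Phi(z_1,r).
\end{equation*}
Setting $\alpha := \log 2/\log(1/\theta_0) \in (0,1)$, the choice of $k$ gives $2^{-(k+1)}\leq(\tau/r)^\alpha$, and hence $\Phi(z_1,\tau)\leq 2\theta_0^{-4/3}(\tau/r)^\alpha\Phi(z_1,r)$, which is (\ref{iter:decay:ineq}) up to an absorbed universal constant (or, equivalently, after slightly decreasing $\alpha$).

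The only real obstacle is the bookkeeping in the first step: ensuring that both single-scale hypotheses of Lemma \ref{decayThm} actually propagate through the iteration. This works precisely because $\Phi$ is \emph{at once} the quantity the lemma decays and (up to a universal factor) the smallness parameter in (\ref{eps0}), so the smallness hypothesis is self-improving; and the $L^\infty_t L^3_x$ norm of $\grad d$ over a smaller parabolic cylinder cannot exceed its value on a larger one, so (\ref{l3infinity}) is automatically preserved.
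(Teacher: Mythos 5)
Your proof follows the same route as the paper: verify that the hypotheses of Lemma~\ref{decayThm} propagate to cylinders centered at $z_1$, iterate the lemma at dyadic scales $\theta_0^k$, and convert the geometric decay $2^{-k}$ into a power law $(\tau/r)^\alpha$. The final conversion step is handled correctly (and in fact more explicitly than in the paper, which just calls it ``well known'').

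There is, however, one imprecise step. You assert that for every $\rho\in(0,r_0/2]$, the inclusion $P_\rho(z_1)\subset P_{r_0}(z_0)$ yields $\Phi(z_1,\rho)\leq C\,\Phi(z_0,r_0)$ with a \emph{universal} $C$. This is false as a direct consequence of inclusion alone: comparing the $\rho^{-2}$ prefactors gives
\begin{equation*}
\Phi(z_1,\rho)\ \le\ \left(\frac{r_0}{\rho}\right)^{4/3}\Phi(z_0,r_0),
\end{equation*}
with the exponent $4/3$ coming from the pressure term, and this factor blows up as $\rho\downarrow 0$. So you cannot use inclusion to conclude that the smallness hypothesis \eqref{eps0} holds at $(z_1,r)$ for all $r\le r_0/2$, which is precisely what you need to \emph{start} the iteration at an arbitrary $r$. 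The correct bookkeeping (and what the paper implicitly does with $r_1=r_0/2$, $\epsilon_1=2^{8/3}\epsilon_0$) is: establish $\Phi(z_1,r_0/2)\le 2^{4/3}\Phi(z_0,r_0)\le\epsilon_1$ by inclusion at that one top scale only; then apply Lemma~\ref{decayThm} to obtain $\Phi(z_1,\theta_0^k r_0/2)\le 2^{-k}\epsilon_1$; observe that for any intermediate $r\in(\theta_0^{k+1}r_0/2,\theta_0^k r_0/2]$ one has $\Phi(z_1,r)\le\theta_0^{-4/3}2^{-k}\epsilon_1$, which is $\le\epsilon_0$ for all $k$ once $\epsilon_0$ in the Corollary is chosen a fixed factor smaller than the threshold in Lemma~\ref{decayThm}; and only then is the lemma applicable at every starting scale $r\le r_0/2$. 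In other words, the smallness at small scales comes \emph{out of} the iteration, not from the inclusion $P_\rho(z_1)\subset P_{r_0}(z_0)$; your phrase ``the smallness hypothesis is self-improving'' at the end is the right idea, but it should replace, not supplement, the incorrect uniform-inclusion bound at the start. (One more minor point: the multiplicative constant $2\theta_0^{-4/3}$ that your argument produces in front of $(\tau/r)^\alpha$ cannot literally be absorbed by shrinking $\alpha$, since for $\tau$ close to $r$ the factor $(\tau/r)^{\alpha-\alpha'}$ is close to $1$; the paper's statement without a constant is itself a harmless imprecision, and a constant $C$ in \eqref{iter:decay:ineq} is all that is used downstream.)
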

\begin{proof} Set $r_1=\frac{r_0}2$ and $\epsilon_1=2^{\frac83}\epsilon_0$.
Then it follows from  (\ref{eps0}) and (\ref{l3infinity}) that for any $z_1\in P_{\frac{r_0}2}(z_0)$,
both (\ref{eps0}) and (\ref{l3infinity})  also hold for $(u,P,d)$ with $z_0, r_0$ and $\epsilon_0$
replaced by $z_1, r_1$ and $\epsilon_1$ respectively.
For $0<\rho<r_1$, define $\Phi(\rho)$ by
\begin{equation*}
  \Phi(\rho) := \left ( \frac{1}{\rho^2} \int_{P_\rho(z_1)}|u|^3 \right )^{\frac13}
  + \left ( \frac{1}{\rho^2} \int_{P_\rho(z_1)} |P|^{\frac32} \right )^{\frac23}
  + \left ( \frac{1}{\rho^2} \int_{P_\rho(z_1)} |\grad d|^{3} \right )^{\frac13}. \\
\end{equation*}
Then  applying Lemma \ref{decayThm} for $(u,P,d)$ on
$P_{r_1}(z_1)$, there exists $\theta_0 \in (0,\frac12)$
such that for any $0<r\le r_1$, it holds that
\begin{equation*}
  \label{Phi:ineq}
  \Phi(\theta_0 r) \leq \frac{1}{2}\Phi(r) \leq \frac{1}{2}\epsilon_1.
\end{equation*}
Iterating \eqref{Phi:ineq} $k$-times, $k\ge 1$,  yields
\begin{equation*}
  \label{iter:decay:ineq:1}
  \Phi(\theta_0^kr) \leq 2^{-k} \Phi(r).
\end{equation*}
It is well known that this implies that there exists $\alpha\in (0,1)$ such that
for any $0<\tau<r\le r_1$, $\displaystyle\Phi(\tau)\le (\frac{\tau}{r})^\alpha\Phi(r)$.  Therefore \eqref{iter:decay:ineq} holds.
\end{proof}

\medskip
\noindent{\bf Proof of Theorem \ref{smooth:solutions}}.
We will now prove the smoothness of $(u,d)$ in $P_{\frac{r_0}4}(z_0)$ by
 the estimate \eqref{iter:decay:ineq}.
The idea is based on the Riesz potential estimates
between Morrey spaces, that is analogous to those of Huang-Wang \cite{HW} and Lin-Wang \cite{LW}.

First, let's recall the notion of Morrey spaces on $\mathbb R^3\times\mathbb R$, equipped with the parabolic metric ${\bf \delta}$: 
$$\delta\Big((x,t), (y,s)\Big)=\max\Big\{|x-y|, \sqrt{|t-s|}\Big\}, \ \forall\ (x,t), \ (y,s)\in\mathbb R^3\times\mathbb R.$$
For any open set $U \subset \R^{3+1}$, $1 \leq p < +\infty$, and $0 \leq \lambda \leq 5$,
define the {Morrey Space} $M^{p,\lambda}(U)$ by
\begin{equation}
  M^{p,\lambda}(U) :=
  \left \{
    v\in L^p_{\rm{loc}}(U):
   \left \|v\right\|^p_{M^{p,\lambda}(U)} \equiv \sup_{z \in U, r>0} r^{\lambda - 5} \int_{P_r(z) \cap U} |v|^p < \infty
  \right \}.
  \label{morreyDef}
\end{equation}
By Corollary \ref{iteratedEst} we have that for some $\alpha\in (0,1)$,
\begin{equation}
  u, \ \grad d \in M^{3,3(1-\alpha)} \left (P_{\frac{r_0}2}(z_0) \right).
  \label{morreyugradd}
\end{equation}
Write the equation (\ref{LLF})$_3$ as
\begin{equation}\label{director1}
  \partial_t d - \lap d = f, \ {\rm{with}}\ \ f:= (|\grad d|^2d - u \cdot \grad d).
\end{equation}
By \eqref{morreyugradd}, we see that
\begin{equation*}
  f \in M^{\frac32,3(1-\alpha)} \left (P_{\frac{r_0}2}(z_0) \right ).
\end{equation*}
As in  \cite{LW} and  \cite{HW},  let $\eta \in C_0^\infty(\R^{3+1})$ be a cut-off function of $P_{\frac{r_0}2}(z_0)$:
$0\le\eta\le 1$, \ $\eta\equiv 1$ in $P_{\frac{r_0}2}(z_0)$, and $|\partial_t\eta|+|\nabla^2\eta|\le Cr_0^{-2}$.
Set $w = \eta^2 d$.  Then we have
\begin{equation}
  \label{Fdef}
  \partial_t w - \lap w =F, \ \ F:= \eta^2 f + (\partial_t \eta^2 - \lap \eta^2)d - 2 \grad \eta^2 \cdot \grad d.
\end{equation}
It is easy to check that $F \in M^{\frac32,3(1-\alpha)}(\R^{3+1})$ and satisfies the estimate
\begin{equation}\label{F-estimate}
  \begin{aligned}
    \Big\| F \Big\|_{M^{\frac32,3(1-\alpha)}(\R^{3+1})} &\leq C\left [ 1 + \|f\|_{M^{\frac32,3(1-\alpha)}(P_{\frac{r_0}2}(z_0))}\right]
\le C(1+\epsilon_0).
  \end{aligned}
\end{equation}
Let $\Gamma(x,t)$ denote the fundamental solution of the heat equation on $\mathbb R^3$. Then
by  the Duhamel formula for (\ref{Fdef}) and  the estimate (see \cite{HW} Lemma 3.1):
$$|\nabla\Gamma|(x,t)\lesssim \frac{1}{\delta^{4}((x,t), (0,0))}, \ \forall (x,t)\not=(0,0),$$
we have
\begin{equation}
  \label{riesz:pot:est:1}
  \begin{aligned}
    |\grad w(x,t)|
    &\leq \int_0^t \int_{\R^3} |\grad \Gamma(x-y,t-s)||F(y,s)|
    \leq C \int_{\R^4} \frac{|F(y,s)|}{\delta^4((x,t),(y,s))}:
    = C I_1(|F|)(x,t),
  \end{aligned}
\end{equation}
where $\mathcal I_\beta $ is the Riesz potential of order $\beta$ on $\mathbb R^4$ ($\beta\in [0,5]$),
defined by
\begin{equation}
  \label{riesz:pot:def}
  \mathcal{I}_\beta(g) = \int_{\R^{4}} \frac{|g(y,s)|}{\delta((x,t),(y,s))^{5-\beta}}, \ \ g \in L^p(\mathbb R^{4}).
\end{equation}
Applying the Riesz potential estimates (see \cite{HW} Theorem 3.1),  we conclude that
$\grad w \in M^{\frac{3(1-\alpha)}{1-2\alpha}, 3(1-\alpha)}(\R^{4})$ and
\begin{equation}
  \label{morrey:gradw:est}
  \Big\|\grad w \Big\|_{M^{\frac{3(1-\alpha)}{1-2\alpha}, 3(1-\alpha)}(\R^{4})}  \lesssim
 \Big\|F\Big\|_{M^{\frac32,3(1-\alpha)}(\R^{4})}
\lesssim \left [ 1 + \|f\|_{M^{\frac32,3(1-\alpha)}(P_{\frac{r_0}2}(z_0))}\right]\lesssim (1+\epsilon_0).
\end{equation}
Choosing $\alpha\uparrow \frac12$ and using
$\lim_{\alpha \uparrow \frac{1}{2}}\frac{3(1-\alpha)}{1-2\alpha} = +\infty$, we can conclude that
for any $1<q<\infty$,
$\nabla w\in L^q(P_{r_0}(z_0))$ and
\begin{equation}
  \label{gradd:L_m:m.gt.1}
  \Big\|\grad w \Big\|_{L^q\left ( P_{r_0}(z_0) \right )}\leq C(q, r_0,\epsilon_0).
\end{equation}
Since $(d-w)$ solves
$$\partial_t(d-w)-\Delta(d-w)=0 \ {\rm{in}}\ P_{\frac{r_0}2}(z_0),$$
it follows from the standard estimate on the heat equation
that for any $1<q<+\infty$, $\nabla d\in L^q(P_{\frac{r_0}4}(z_0))$ and
\begin{equation}
\label{estimate_d}
  \Big\|\grad d \Big\|_{L^q( P_{\frac{r_0}4}(z_0))}\leq C(q, r_0,\epsilon_0).
\end{equation}

Now we proceed with the estimation of $u$.  Let $v:\mathbb R^3\times [0,+\infty)\to\mathbb R^3$
solve the Stokes equation:
\begin{equation}
\begin{cases}
  \label{aux:stokes}
  \begin{aligned}
    \partial_t v - \lap v + \grad Q&=  -\grad \cdot[\eta^2 (\grad d \odot \grad d + u \otimes u)] &\text{ in } \R^3 \times (0,\infty),\\
    \grad\cdot v &= 0 &\text{ in } \R^3 \times (0,\infty), \\
    v(\cdot,0) &= 0 &\text{ in } \R^3.
  \end{aligned}
\end{cases}
\end{equation}
By using the Oseen kernel (see Leray \cite{Leray}), an estimate for $v$, similar to \eqref{riesz:pot:est:1},  can be given by
\begin{equation}
  \label{riesz:pot:est:2}
  |v(x,t)| \leq C \int_0^t\int_{\R^{3}} \frac{|X(y,s)|}{\delta((x,t),(y,s))^{3+1}}
\leq C \mathcal I_1(|X|)(x,t),  \ (x,t)\in\mathbb R^3\times (0,+\infty),
\end{equation}
where $X = \eta^2(\grad d \odot \grad d + u \otimes u )$. As above, we can check
that $X \in M^{\frac32,3(1-\alpha)}(\R^{4})$ and
\begin{equation*}
 \Big\|X\Big\|_{M^{\frac32,3(1-\alpha)}(\R^{4})}
\leq C \left [ \|\grad d\|^2_{M^{3,3(1-\alpha)}(P_{\frac{r_0}2}(z_0))} + \|u\|^2_{M^{3,3(1-\alpha)}(P_{\frac{r_0}2}(z_0))} \right ].
\end{equation*}
Hence, by  \cite{HW} Theorem 3.1, we have that $v \in M^{\frac{3(1-\alpha)}{1-2\alpha}, 3(1-\alpha)}(\R^{4})$,  and
\begin{equation}
  \label{morrey:v:est}
   \Big\|v \Big\|_{M^{\frac{3(1-\alpha)}{1-2\alpha}, 3(1-\alpha)}(\R^{4})}
\leq C\Big\|X\Big\|_{M^{\frac32,3(1-\alpha)}(\R^{4})}\le C \left [ \|\grad d\|^2_{M^{3,3(1-\alpha)}(P_{\frac{r_0}2}(z_0))} + \|u\|^2_{M^{3,3(1-\alpha)}(P_{\frac{r_0}2}(z_0))} \right ].
\end{equation}
By sending $\alpha\uparrow \frac12$, (\ref{morrey:v:est}) implies that for any $1<q<+\infty$,
$v \in L^q\left (P_{r_0}(z_0)\right )$ and
\begin{equation}\label{estimate_q}
\Big\|v \Big\|_{L^q\left (P_{r_0}(z_0)\right )}\le C(q, r_0, \epsilon_0).
\end{equation}
Note that $(u-v)$ satisfies the linear homogeneous Stokes equation in $P_{\frac{r_0}2}(z_0)$:
\begin{equation*}
\partial_t (u-v) - \lap (u-v) + \grad(P - Q) = 0, \ \grad \cdot (u-v) = 0\ \  \text{ in } \ \ P_{\frac{r_0}2}(z_0).
\end{equation*}
It is well-known that $(u-v) \in L^{\infty}(P_{\frac{r_0}4}(z_0))$.  Therefore we conclude that for any
$1<q<+\infty$, $u\in L^q(P_{\frac{r_0}4}(z_0))$, and
\begin{equation}
  \label{estimate_u}
 \Big\| u \Big\|_{L^q( P_{\frac{r_0}4}(z_0))}\le C(q, r_0, \epsilon_0).
\end{equation}
It is now standard that by (\ref{estimate_d}) and (\ref{estimate_u}), and estimates for the linear parabolic equation and
the linear Stokes equation,  $\displaystyle (u,d)\in C^\infty(P_{\frac{r_0}4}(z_0), \mathbb R^3\times S^2)$ and the estimate (\ref{cm:estimate}) holds.
\qed

\medskip
\section{Existence of $L^3_{\rm{uloc}}$-solutions and Proofs of Theorem
\ref{wellposedness}}

In this section, we will prove our main result -- Theorem \ref{wellposedness}. 

\medskip
\noindent{\bf Proof of Theorem \ref{wellposedness}}.  First, observe that by the scaling invariance of (\ref{LLF}), $(u,P,d):\mathbb R^3\times [0, T)\to
\R^3\times\R\times S^2$ solves (\ref{LLF}) under the initial condition
$(u_0,d_0)$ if and only if for any $\lambda>0$, $(u^\lambda, P^\lambda, d^\lambda): \mathbb R^3\times [0, T^\lambda)\to
\R^3\times\R\times S^2$ solves (\ref{LLF}) under the initial condition
$(u_0^\lambda,d_0^\lambda)$. Here
$$\displaystyle T^\lambda={\lambda^{-2}}T,
\ (u_0^\lambda(x), d^\lambda_0(x))=(\lambda u_0(\lambda x), d_0(\lambda x))
\ \ \ {\rm{for}}\ \ x\in\R^3;$$
and
$$\left(u^\lambda(x,t), P^\lambda(x,t), d^\lambda(x,t)\right)
=\left(\lambda u(\lambda x, \lambda^2 t), \lambda^2 P(\lambda x, \lambda^2 t),
d(\lambda x, \lambda^2 t)\right)
\ \ \ {\rm{for}}\ \ (x,t)\in\R^3\times [0, T^\lambda).$$
Therefore it suffices to prove Theorem \ref{smooth:solutions} for $R=1$. We divide
the proof into six steps. 

\medskip
\noindent{\bf Step 1}. {\it Approximation of $(u_0,d_0)$ by smooth initial data}.  We summarize this step into the following
lemma.
\begin{lemma}\label{approx} For a sufficiently small $\epsilon_0>0$,
let $(u_0, d_0):\R^3\to \R^3\times S^2$, with $u_0\in L^3_{\rm{uloc}}(\R^3)$ divergence free
and $(d_0-e_0)\in L^3(\R^3)$ for some $e_0\in S^2$, satisfy
\begin{equation}\label{small_condition1}
 |||(u_0,\nabla d_0)|||_{L^3_{1}(\R^3)}\leq \epsilon_0.
\end{equation}
Then there exist a large constant $C_0>0$ and
$$\displaystyle\{(u_0^k, d_0^k)\}\subset C^\infty(\R^3,\R^3\times S^2)\cap \bigcap_{p=2}^3(L^p(\R^3,\R^3)\times \dot{W}^{1,p}(\R^3,S^2))$$
such that the following properties hold:\\
(i) $\nabla\cdot u_0^k=0$ in $\R^3$ for all $k\ge 1$.\\
(ii) As $k\rightarrow \infty$,
\begin{equation}\label{local_convergence}
(u_0^k,d_0^k)\rightarrow (u_0, d_0) \ {\rm{and}}\ \nabla d_0^k\rightarrow \nabla d_0 
\ {\rm{in}}\ L^p_{\rm{loc}}(\R^3) \ {\rm{for}}\ p=2, 3.
\end{equation}
(iii) There exists  $k_0>1$ such that for any $k\ge k_0$,
\begin{equation} \label{uniform_l3loc}
|||(u_0^k,\nabla d_0^k)|||_{L^3_{1}(\R^3)}\leq C_0\epsilon_0.
\end{equation}
\end{lemma}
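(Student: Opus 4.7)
\bigskip
\noindent\textbf{Proof proposal for Lemma \ref{approx}.}

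The plan is to combine three standard operations: mollification at scale $\epsilon_k\downarrow 0$, spatial cut-off at scale $R_k\uparrow\infty$, and restoration of the structural constraints (divergence-freeness for $u_0^k$ via a Bogovski\u{\i} correction, $S^2$-valuedness for $d_0^k$ via radial projection). The smallness hypothesis \eqref{small_condition1} is what makes projection onto $S^2$ legitimate, since it forces the mollified director to stay close to the sphere in magnitude.

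For the velocity, fix a standard mollifier $\eta_\epsilon\ge 0$ with $\int\eta_\epsilon=1$ and a cut-off $\chi_R\in C_c^\infty(B_{2R})$ with $\chi_R\equiv 1$ on $B_R$ and $|\nabla\chi_R|\lesssim R^{-1}$. The mollification $\eta_{\epsilon_k}*u_0$ is smooth and divergence-free, and satisfies $\|\eta_{\epsilon_k}*u_0\|_{L^3_1(\R^3)}\lesssim \|u_0\|_{L^3_1(\R^3)}\lesssim \epsilon_0$ because convolution commutes with translation. Since $\nabla\cdot[\chi_{R_k}(\eta_{\epsilon_k}*u_0)]=\nabla\chi_{R_k}\cdot(\eta_{\epsilon_k}*u_0)$ has compact support in the annulus $A_k:=B_{2R_k}\setminus B_{R_k}$ and zero mean (it is a total divergence), the Bogovski\u{\i} operator on $A_k$ yields $w_k\in C_c^\infty(A_k,\R^3)$ with
\begin{equation*}
\nabla\cdot w_k=-\nabla\chi_{R_k}\cdot(\eta_{\epsilon_k}*u_0),\qquad \|w_k\|_{L^p(A_k)}\lesssim R_k\|\nabla\chi_{R_k}\cdot(\eta_{\epsilon_k}*u_0)\|_{L^p(A_k)}
\end{equation*}
for every $p\in(1,\infty)$. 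Setting $u_0^k:=\chi_{R_k}(\eta_{\epsilon_k}*u_0)+w_k$ gives a smooth, compactly supported, divergence-free field, hence in $L^2\cap L^3(\R^3)$; this is (i), and convergence $u_0^k\to u_0$ in $L^p_{\mathrm{loc}}$ for $p=2,3$ follows because on any fixed ball the cut-off and Bogovski\u{\i} corrections vanish once $R_k$ is large enough.

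For the director, the hypothesis $\|\nabla d_0\|_{L^3_1(\R^3)}\le\epsilon_0$ combined with the Poincar\'e inequality yields, for any $\epsilon_k<1$ and $x\in\R^3$,
\begin{equation*}
\fint_{B_{\epsilon_k}(x)}|d_0-(d_0)_{x,\epsilon_k}|^3\lesssim \epsilon_k^3\fint_{B_{\epsilon_k}(x)}|\nabla d_0|^3\lesssim \epsilon_0^3,
\end{equation*}
and using the identity $|(d_0)_{x,\epsilon_k}|^2=1-\tfrac12\iint\eta_{\epsilon_k}(y)\eta_{\epsilon_k}(z)|d_0(x-y)-d_0(x-z)|^2\,dy\,dz$, one obtains $|\eta_{\epsilon_k}*d_0(x)|\ge 1-C\epsilon_0^2\ge \tfrac12$ uniformly. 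Thus $\bar d_0^k:=(\eta_{\epsilon_k}*d_0)/|\eta_{\epsilon_k}*d_0|$ is a smooth map to $S^2$ with $|\nabla\bar d_0^k|\lesssim|\nabla(\eta_{\epsilon_k}*d_0)|=|\eta_{\epsilon_k}*\nabla d_0|$. To globalize to $\dot W^{1,p}(\R^3,S^2)$, exploit $(d_0-e_0)\in L^3(\R^3)$: outside a large ball $B_{R_k}$, $\bar d_0^k$ lies in the hemisphere around $e_0$ on a set of nearly full measure, and we smoothly deform $\bar d_0^k$ to $e_0$ outside $B_{2R_k}$ using the exponential map on $S^2$ based at $e_0$, multiplying the pullback $\exp_{e_0}^{-1}\bar d_0^k$ by a cut-off and exponentiating back. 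This preserves the sphere constraint, smoothness, and contributes to $\nabla d_0^k$ a term whose $L^p$ norm is controlled by $R_k^{-1}\|d_0-e_0\|_{L^3}+\|\nabla d_0\|_{L^3(A_k)}$.

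The main obstacle is property (iii), the uniform $L^3_1$-bound. For $u_0^k$ on balls $B_1(x)$ disjoint from $A_k$, the bound $C_0\epsilon_0$ is immediate from $\|\eta_{\epsilon_k}*u_0\|_{L^3_1}\lesssim\epsilon_0$; on balls that hit $A_k$, one must combine this with the Bogovski\u{\i} $L^3$ estimate applied locally to the thin annulus, where the source has magnitude $R_k^{-1}\|\eta_{\epsilon_k}*u_0\|_{L^3(A_k)}$, producing a contribution of size $O(\epsilon_0)$ after summing over the $\sim R_k^3$ unit balls covering $A_k$ (one must verify the Bogovski\u{\i} bound is stable under rescaling the annulus). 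For $d_0^k$ the normalization contributes no extra factor in the gradient bound, and the hemisphere deformation outside $B_{R_k}$ adds at most $C(R_k^{-1}\|d_0-e_0\|_{L^3}+\|\nabla d_0\|_{L^3(A_k)})$, which is uniformly bounded by $C\epsilon_0$. This gives (iii) with some $C_0>1$ independent of $k$, and then (ii) follows from the standard $L^p_{\mathrm{loc}}$ convergence of mollifiers together with the fact that the cut-off and deformation regions retreat to infinity.
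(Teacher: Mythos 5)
Your overall plan (mollify, cut off, restore the structural constraints) is reasonable and genuinely different from the paper's. The paper approximates the velocity via the Leray projection of $\theta_k u_0$, estimates the resulting nonlocal piece $\nabla\Delta^{-1}(\nabla\theta_k\cdot u_0)$ by splitting the Riesz kernel into a near and a far part, and then adds a Basson-style constant correction $\frac32\mathbb P[\theta(\tfrac{\cdot}{k})c]$ to recover local convergence to $u_0$; for the director it interpolates $d_0$ to $e_0$ on a Fubini-selected good annulus, projects, mollifies, projects again. You replace these by a Bogovski\u{\i} correction for $u_0$ and an exponential-map deformation for $d_0$. The director construction can be made to work, but the velocity construction has a real gap.

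The gap is the uniform local $L^3$ bound on the Bogovski\u{\i} correction $w_k$, which is exactly property (iii). The Bogovski\u{\i} estimate on the rescaled annulus gives a \emph{global} bound
$\|w_k\|_{L^3(A_k)} \lesssim R_k\|\nabla\chi_{R_k}\cdot(\eta_{\epsilon_k}*u_0)\|_{L^3(A_k)} \lesssim \|\eta_{\epsilon_k}*u_0\|_{L^3(A_k)} \lesssim R_k\,\epsilon_0,$
where the last step covers $A_k$ by $\sim R_k^3$ unit balls. But what (iii) demands is the \emph{uniform local} estimate $\sup_{x}\|w_k\|_{L^3(B_1(x))}\lesssim\epsilon_0$, and the global bound $R_k\epsilon_0$ says nothing about that. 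Your sketch --- ``producing a contribution of size $O(\epsilon_0)$ after summing over the $\sim R_k^3$ unit balls covering $A_k$'' --- computes the wrong object: summing the cubes of the local norms recovers the global $L^3(A_k)$ norm, which is $\sim R_k\epsilon_0$, not the supremum. The Bogovski\u{\i} operator is not local, so the value of $w_k$ on a given unit ball is driven by the source over the whole annulus; to control $\|w_k\|_{L^3(B_1(x))}$ one needs a decay estimate for the Bogovski\u{\i} kernel on the (non-star-shaped) annulus and then an integration against the $L^3_{\rm uloc}$ data decomposed into dyadic shells --- in other words, exactly the analogue of the paper's estimates of the near part $I$ and the far part $II$ of the Riesz kernel. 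That work is the crux of the lemma, and the proposal does not carry it out.

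Two smaller points. First, for the exponential-map deformation of the director you need $\bar d_0^k(x)$ to be pointwise close to $e_0$ on the deformation region so that $\exp_{e_0}^{-1}\bar d_0^k$ is everywhere defined; ``in the hemisphere on a set of nearly full measure'' is not enough. This is fixable --- combine the $L^3$ smallness of $d_0-e_0$ on far balls with the $L^3_{\rm uloc}$ bound on $\nabla\bar d_0^k$ (Morrey/oscillation control) to upgrade to a pointwise bound --- but it must be said. Second, your bound ``$C(R_k^{-1}\|d_0-e_0\|_{L^3}+\|\nabla d_0\|_{L^3(A_k)})$'' for the deformation's contribution cannot be a bound on the $L^3_1$ norm as written, since $\|\nabla d_0\|_{L^3(A_k)}\sim R_k\epsilon_0$; what you want is $\sup_x\|\nabla d_0\|_{L^3(B_1(x)\cap A_k)}\lesssim\epsilon_0$. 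These are fixable slips, but the Bogovski\u{\i} local estimate is a genuine missing ingredient and is where the paper's choice of the Leray projection, with its explicit convolution kernel, pays off.
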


We assume Lemma \ref{approx} for the moment and  continue the proof of Theorem \ref{wellposedness}. 
By modifying the proof of the local existence Theorem 3.1 of Lin-Lin-Wang \cite{LLW}\footnote{For $K>0$ and $0<\alpha<1$, first choose the solution space
\begin{equation*}
\begin{aligned}
X_T&=\Big\{(u,d): \R^3\times [0,T]\to\R^3\times\R^3: \nabla\cdot u=0,
\ \nabla^2f, \partial_t f\in C_b(\R^3\times [0, T])\cap
C^\alpha(\R^3\times [0,T]), \\
&\qquad (u,d)|_{t=0}=(u_0^k, d_0^k), \ \|(u-u_0^k, d-d_0^k)\|_{C^{2,1}_\alpha(\R^3\times [0,1])}\le K\Big\},
\end{aligned}
\end{equation*}
then follow the fixed point argument as in \cite{LLW} with slight modifications, one can obtain the local existence of smooth solutions.
}, we can conclude that
there exist $0<T_k< +\infty$ and smooth solutions $(u^k,P^k,  d^k):\R^3\times
[0, T_k]\to \R^3\times\R\times S^2$ of (\ref{LLF}), under the initial condition
$\displaystyle (u^k, d^k)|_{t=0}=(u_0^k, d_0^k)$.
Observe that by applying the proof of Lemma \ref{local_energy_inequality} with $\phi\equiv 1$,
the following energy inequality holds:
\begin{equation}\label{global_energy_ineq}
\int_{\R^3}(|u^k(t)|^2+|\nabla d^k(t)|^2)
+2\int_0^t\int_{\R^3}|\nabla u^k|^2+|\Delta d^k+|\nabla d^k|^2 d^k|^2
=\int_{\R^3}(|u^k_0|^2+|\nabla d^k_0|^2), \ 0\le t\le T_k.
\end{equation}
In particular, we have that $(u_k, d_k)\in C([0, T_k], L^2(\R^3)\times \dot{W}^{1,2}(\R^3))$.

\medskip
\noindent{\bf Step 2}. {\it Uniform lower bounds of $T_k$}. To see this, we first need to show

\smallskip
\noindent{\it Claim}. There exists $\tau_0>0$  such that
if $T_k$ is the maximal time interval for the
smooth solutions $(u^k, d^k)$ obtained in step 1, then $T_k\ge\tau_0$, and
\begin{equation}\label{small_condition4}
  \sup_{0\leq t \leq \tau_0} |||(u^k(t), \grad d^k(t))|||_{L^3_\frac12(\R^3)}^3\leq 2C_0^3\epsilon_0^3.
\end{equation}
To see (\ref{small_condition4}), note that (\ref{uniform_l3loc}) implies that
there exists a maximal time $t_k^*\in (0, T_k]$ such that
  \begin{equation} \label{small_condition5}
    \sup_{0\leq t \leq t_k^*}  |||(u^k(t), \grad d^k(t))|||_{L^3_\frac12(\R^3)}^3\le 2C_0^3\epsilon_0^3.
  \end{equation}
 Hence
  \begin{equation}\label{small_condition6}
 |||(u^k(t_k^*), \grad d^k(t_k^*))|||_{L^3_\frac12(\R^3)}^3=2C_0^3\epsilon_0^3.
  \end{equation}
By a simple covering argument, we see that (\ref{small_condition5}) implies
\begin{equation}\label{small_condition7}
\sup_{0\le t\le t_k^*}\sup_{x\in\mathbb R^3}\int_{B_1(x)}(|u^k(t)|^3+|\nabla d^k(t)|^3)\le C\epsilon_0^3.
\end{equation}
  For any fixed $x_0 \in \R^3$, let $\phi_0 \in C_0^\infty(\R^3)$ be a cut-off function of
$B_\frac12(x_0)$:
  \begin{equation*}
    0 \leq \phi_0 \leq 1, \ \phi_0 \equiv 1 \text{ on } B_\frac12(x_0), \ \phi_0 \equiv 0
    \text { outside } B_1(x_0) \text{, and } |\grad \phi_0|\leq 4.
  \end{equation*}
  For convenience, we set for $0\le t\le t_k^*$,
  \begin{equation}
    \mathcal E_3^k(\phi_0; (x_0,t)):= \int_{\R^3}\left[|u^k(t)|^3 + |\grad d^k(t)|^3\right]\phi_0^2.
  \end{equation}
  Then by \eqref{localEnergyInequality1} and (\ref{small_condition7}) we have that
for any $0\le t\le t_k^*$,
  \begin{equation}
    \label{lifespan:energy:1}
    \begin{aligned}
      &\frac{d}{dt} \mathcal E_3^k(\phi_0; (x_0, t)) +
(1-C\epsilon_0^2)\int_{\R^3}[|\grad(|u^k(t)|^{\frac32}\phi_0)|^2
+ |\grad(|\grad d^k(t)|^{\frac32}\phi_0)|^2] \\
      &\leq C \int_{\R^3} (|u^k(t)|^3 + |\grad d^k(t)|^3)|\grad \phi_0|^2 + C
\sup_{y\in\R^3}\left( \int_{B_1(y)}|u^k(t)|^3+|\nabla d^k(t)|^3\right)^\frac53\\
      &\leq C \epsilon_0^3
      + C\epsilon_0^5\le C\epsilon_0^3.
    \end{aligned}
  \end{equation}
 Integrating (\ref{lifespan:energy:1}) with respect to $t \in [0, t_k^*]$ yields
  \begin{equation}\label{lifespan2}
    \begin{aligned}
  &\mathcal E_3^k(\phi_0; (x_0, t_k^*)) + (1-C\epsilon_0^2) \int_0^{t_k^*}\int_{\R^3}[|\grad(|u^k|^{\frac32}\phi_0)|^2 + |\grad(|\grad d^k|^{\frac32}\phi_0)|^2] \\
      &\qquad\qquad\qquad\qquad\qquad\qquad\leq C \epsilon_0^3t_k^* +\mathcal E_3^k(\phi_0; (x_0, 0))\le C \epsilon_0^3t_k^* +C_0^3\epsilon_0^3,
    \end{aligned}
  \end{equation}
where we have used (\ref{uniform_l3loc}) in the last step. Therefore if $\epsilon_0>0$ is chosen such that
$1-C\epsilon_0^2\ge 0$, then
(\ref{lifespan2}) implies
 \begin{equation*}\label{lifespan3}
 \mathcal E_3^k(\phi_0; (x_0, t_k^*)) \le C \epsilon_0^3t_k^* +C_0^3\epsilon_0^3.
  \end{equation*}
Taking the supremum of $\mathcal E_3^k(\phi_0; (x_0, t_k^*))$ over $x_0 \in \R^3$, we obtain
\begin{equation*}
  \label{lifespan4}
  2C_0^3\epsilon_0^3=|||(u^k(t^*_k), \nabla d^k(t_k^*)|||_{L^3_\frac12(\R^3)}^3\leq\sup_{x_0\in\R^3}E_3^k(\phi_0; (x_0, t_k^*))
\le C\epsilon_0^3 t_k^*+C_0^3\epsilon_0^3.
  \end{equation*}
This clearly implies that there exists $\tau_0>0$  such that $T_k\ge t_k^*\ge \tau_0$. By the definition
of $t_k^*$, we also see that the estimate (\ref{small_condition4}) holds.

\medskip
\noindent{\bf  Step 3}. {\it Uniform estimation of $(u^k, d^k)$}. Note that $P^k$ satisfies
$$\Delta P^k=-{\rm{div}}^2(u^k\otimes u^k+\nabla d^k\odot\nabla d^k)\ \ \ {\rm{in}}\ \ \ \R^3.$$
It follows from (\ref{global_energy_ineq}),  (\ref{small_condition4}) and Lemma \ref{L3:pressure:lemma} that
\begin{equation}\label{pressure-estimate10}
\sup_{0\le t\le\tau_0}\sup_{x\in\R^3}\left\|P^k(t)-c_x^k(t)\right\|_{L^3(B_1(x))}\le C\epsilon_0,
\end{equation}
where $c_x^k(t)\in\R$ depends on both $x\in\R^3$ and $t\in [0,\tau_0]$.
By (\ref{small_condition4}) and (\ref{pressure-estimate10}), we see that
for any $x_0\in\R^3$, $(u^k, P^k-c_{x_0}^k, d^k)$ satisfies the conditions
of Theorem \ref{smooth:solutions} in $\displaystyle P_{\sqrt{\tau_0}}(x_0,\tau_0):=B_{\sqrt{\tau_0}}(x_0)\times [0,\tau_0]$.
Hence by Theorem \ref{smooth:solutions} we obtain that
$(u^k, d^k)\in C^\infty(\R^3\times (0, \tau_0), \R^3\times S^2)$, and
\begin{equation}\label{uniform_estimate10}
\sup_k \left\|(u^k, \nabla d^k)\right\|_{C^m(\R^3\times [\delta,\tau_0])}\le C(m, \delta, \epsilon_0)
\end{equation}
holds for any $0<\delta<\frac{\tau_0}2$ and $m\ge 0$.

\medskip
\noindent{\bf  Step 4}. {\it Passage to the limit}. Based on the estimates of $(u^k, d^k)$, we may assume, after taking subsequences,
that $\displaystyle (u,d)\in \bigcap_{0<\delta<\tau_0}C^\infty_b(\R^3\times [\delta, \tau_0], \R^3\times S^2)$, with $\displaystyle (u,\nabla d)\in 
\ L^\infty([0, \tau_0], L^3_{\rm{uloc}}(\R^3)$, such that
$$(u^k, \nabla d^k)\rightarrow (u,\nabla d) \ {\rm{weakly\ in }}\  L^3(\R^3\times [0,\tau_0]),
(u^k,d^k)\rightarrow (u, d) \ {\rm{in}}\ C^m(B_R\times [\delta, \tau_0]), \ \forall\ m\ge 0, R>0, \delta<\tau_0.$$
Sending $k\rightarrow\infty$ in (\ref{small_condition7}) yields
$$\sup_{0\le t\le \tau_0}\left\|(u,\nabla d)\right\|_{L^3_1(\R^3)}\le C\epsilon_0.$$
We can check from (\ref{LLF}) and (\ref{small_condition7}) that for any $R>0$,
$$\left\|(\partial_t u^k, \partial_t d^k)\right\|_{L^\frac32([0,\tau_0], W^{-1,\frac32}(B_R))}\le C(R)<+\infty.$$
This implies that
\begin{equation}\label{trace}
\displaystyle (u(t),\nabla d(t))\rightarrow (u_0,\nabla d_0)\ {\rm{strongly\ in}}\ L^3_{\rm{loc}}(\R^3) \ {\rm{as}}\ t\downarrow 0.
\end{equation}
In particular, we have that $(u_0,\nabla d_0)\in C^0_*([0,\tau_0], L^3_{\rm{uloc}}(\R^3))$.

\medskip
\noindent{\bf Step 5}. {\it Characterization of the maximal time interval $T_0$}. Let $T_0>\tau_0$ be the maximal time interval in which
the solution $(u,d)$ constructed in step 4 exists.  Suppose that $T_0<+\infty$ and (\ref{blowup_criterion}) were false. Then there exists
$r_0>0$ so that
$$\limsup_{t \uparrow T_0} |||(u(t), \nabla d(t)) |||_{L_{r_0}^3(\R^3)}\le \epsilon_0.$$
In particular, there exists $r_1\in (0, r_0]$ such that
$$\sup_{T_0-r_1^2\le t\le T_0} |||(u(t), \nabla d(t)) |||_{L_{r_1}^3(\R^3)}\le \epsilon_0.$$
Hence by Theorem \ref{smooth:solutions}, we conclude that $(u,d)\in C^\infty_b(\R^3\times [0, T_0])
\cap L^\infty([0, T_0], L^3_{\rm{uloc}}(\R^3))$. This contradicts the maximality of $T_0$. Hence (\ref{blowup_criterion}) holds.

\medskip
\noindent{\bf Step 6}. {\it Uniqueness}.  Let $(u_1, d_1), (u_2,d_0):\R^3\times [0, T_0]\to\R^3\times S^2$ be two solutions
of (\ref{LLF}), under the same initial condition $(u_0,d_0)$, that satisfy the properties of Theorem \ref{wellposedness}.
We first show $(u_1,d_1)\equiv (u_2,d_2)$ in $\R^3\times [0,\tau_0]$. This can be done
by the argument of \cite{wang} page 15-16. For convenience, we sketch it here.

Set $u=u_1-u_2, d=d_1-d_2$.  Then $(u,d)$ satisfies
\begin{eqnarray*}\begin{cases}
&\partial_t u-\Delta u = -\mathbb P\nabla\cdot[u_1\otimes u_1-u_2\otimes u_2 +\nabla d_1\odot\nabla d_1-\nabla d_2\odot\nabla d_2]\\
&\partial_t d-\Delta d = -(u_1\cdot\nabla d_1-u_2\cdot\nabla d_2)+|\nabla d_1|^2d_1-|\nabla d_2|^2 d_2\\
& (u,d)\ |_{t=0}= (0,0).
\end{cases}
\end{eqnarray*}
By the Duhamel formula, we have
\begin{eqnarray*}
\begin{cases}
& u(t)=-\mathbb V[u_1\otimes u_1-u_2\otimes u_2 +\nabla d_1\odot\nabla d_1-\nabla d_2\odot\nabla d_2]\\
& d(t)=-\mathbb S[(u_1\cdot\nabla d_1-u_2\cdot\nabla d_2)-(|\nabla d_1|^2d_1-|\nabla d_2|^2 d_2)],
\end{cases}
\end{eqnarray*}
where
$$\mathbb Sf(t)=\int_0^t e^{-(t-s)\Delta} f(s)\,ds,
\ \mathbb Vf(t)=\int_0^t e^{-(t-s)\Delta}\mathbb P\nabla\cdot f(s)\,ds, \ \forall f:\mathbb R^3\times [0,+\infty)\to\mathbb R^3.$$
Recall the three function spaces used in \cite{wang}.
Let $\mathbf X_{\tau_0}$ denote the space of functions $f$ on $\R^3\times [0,\tau_0]$ such that
$$|||f|||_{\mathbf X_{\tau_0}}:=\sup_{0<t\le\tau_0}\|f(t)\|_{L^\infty(\R^3)}+\|f\|_{X_{\tau_0}}<+\infty,$$
where
$$\|f\|_{\mathbf X_{\tau_0}}:=\sup_{0<t\le\tau_0}\sqrt{t}\|\nabla f(t)\|_{L^\infty(\R^3)}
+\sup_{x\in\R^3, 0<r\le\sqrt{\tau_0}}(r^{-3}\int_{P_r(x, r^2)}|\nabla f|^2)^\frac12,$$
 $\mathbf Y_{\tau_0}$ denote the space of functions $g$ on $\R^3\times [0,\tau_0]$ such that
$$||g||_{\mathbf Y_{\tau_0}}:=\sup_{0<t\le\tau_0}t\|g(t)\|_{L^\infty(\R^3)}
+\sup_{x\in\R^3, 0<r\le\sqrt{\tau_0}}r^{-3}\int_{P_r(x, r^2)}|g|<+\infty,$$
and $\mathbf Z_{\tau_0}$ the space of functions $h$ on $\R^3\times [0,\tau_0]$ such that
$$\|h\|_{\mathbf Z_{\tau_0}}:=\sup_{0<t\le\tau_0}\sqrt{t}\|h(t)\|_{L^\infty(\R^3)}
+\sup_{x\in\R^3, 0<r\le\sqrt{\tau_0}}(r^{-3}\int_{P_r(x, r^2)}|h|^2)^\frac12<+\infty.$$
Since $(u_i,d_i)\in L^\infty([0,\tau_0], L^2(\R^3)\times \dot{W}^{1,2}(\R^3))$ satisfies (\ref{small_condition0}) for $i=1,2$,
Theorem \ref{smooth:solutions} and the H\"older inequality imply that $u_i\in \mathbf Z_{\tau_0}, d_i\in \mathbf X_{\tau_0}$
for $i=1,2$, and
$$\sum_{i=1}^2 (\|u_i\|_{\mathbf Z_{\tau_0}}+\|d_i\|_{\mathbf X_{\tau_0}})\le C\epsilon_0.$$
It follows from Lemma 3.1 and Lemma 4.1 of \cite{wang} that
\begin{eqnarray*}
\|u\|_{\mathbf Z_{\tau_0}}+|||d|||_{\mathbf X_{\tau_0}}
&\lesssim& \Big\|(|u_1|+|u_2|)|u|+(|\nabla d_1|+|\nabla d_2|)|\nabla d|\Big\|_{\mathbf Y_{\tau_0}}\\
&&+\Big\| |u||\nabla d_2|+|u_1||\nabla d|+(|\nabla d_1|+|\nabla d_2|)|\nabla d|+|\nabla d_2|^2|d|\Big\|_{\mathbf Y_{\tau_0}}\\
&\lesssim& [\sum_{i=1}^2(\|d_i\|_{\mathbf X_{\tau_0}}+\|u_i\|_{\mathbf Z_{\tau_0}})]\|u\|_{\mathbf Z_{\tau_0}}
+[\sum_{i=1}^2(\|u_i\|_{\mathbf Z_{\tau_0}}+\|d_i\|_{\mathbf X_{\tau_0}})]|||d|||_{\mathbf X_{\tau_0}}\\
&\lesssim& \epsilon_0 [\|u\|_{\mathbf Z_{\tau_0}}+|||d|||_{\mathbf X_{\tau_0}}].
\end{eqnarray*}
This clearly implies that $(u_1,d_1)\equiv (u_2,d_2)$ in $\R^3\times [0,\tau_0]$. Since $(u_1, d_1)$ and $(u_2,d_2)$ are classical solutions of (\ref{LLF}) in
$\R^3\times [\tau_0, T_0)$, and $(u_1,d_1)=(u_2,d_2)$ at $t=\tau_0$, it is well-known that $(u_1,d_1)\equiv  (u_2,d_2)$ in
$\R^3\times [\tau_0, T_0)$. The proof is complete. \qed\\

Finally, we provide the proof of Lemma \ref{approx}. 

\smallskip
\noindent{\bf Proof of Lemma \ref{approx}}: 
Let $\theta\in C^\infty([0,+\infty))$ be such that
$$\theta(r)=1 \ {\rm{for}}\ 0\le r\le 1; \ \
0\le\theta(r)\le 1 \ {\rm{for}}\ 1\le r\le 2;\ \
\theta(r)=0 \ {\rm{for}}\ r\ge 2.$$ 
Let $\eta\in C_0^\infty(\R^3)$ be a standard mollifier,
and define for $k\ge 1$
$$\displaystyle\eta_{\frac1k}(x)={k^3}\eta(kx) \ {\rm{ and }}\
\theta_k(x)=\theta(\frac{x}{k}) \ {\rm{for}} \ x\in\R^3.$$
\noindent{\it Step 1}. {\it Approximation of $d_0$}. This will be done by two rounds of approximation.
It follows from $(d_0-e_0)\in L^3(\R^3)$  that
there exists $k_0>1$ such that for any $k\ge k_0$, it holds
\begin{equation}\label{fubini1}
\int_{\R^3\setminus B_{k-1}}|d_0-e_0|^3\le\epsilon_0^3.
\end{equation}
By the Fubini theorem, we may assume that for $k\ge k_0$, it also holds
\begin{equation}\label{fubini2}
\begin{cases}
\int_{\partial B_{k}}\left|d_0-e_0\right|^3\,dH^2
\leq 2\int_{\R^3\setminus B_{k-1}}\left|d_0-e_0\right|^3\le2\epsilon_0^3,\\
\sup_{x\in\partial B_k}\int_{\partial B_k\cap B_2(x)}|\nabla d_0|^3\,dH^2
\le 4\left\|\nabla d_0\right\|_{L^3_2(\R^3)}^3\le C\epsilon_0^3,\\
\int_{\partial B_k}\left|\nabla d_0\right|^3\,dH^2
\le 2\int_{B_{k+1}}\left|\nabla d_0\right|^3\lesssim k^3
\left\|\nabla d_0\right\|_{L^3_1(\R^3)}^3
\le k^3\epsilon_0^3.
\end{cases}
\end{equation}
Define the approximate sequence $\widetilde{d_0^k}:\mathbb R^3\to\mathbb R^3$
by 
\begin{equation*}
\widetilde{d_0^k}(x)= \begin{cases}d_0(x) & \ {\rm{if}}\ |x|\le k\\
(|x|-k)e_0+(k+1-|x|) d_0(k\frac{x}{|x|}) & \ {\rm{if}}\ k\le |x|\le k+1\\
e_0 & \ {\rm{if}}\ |x|\ge k+1.
\end{cases}
\end{equation*}
Then by direct calculations we have that
\begin{eqnarray*}
\|\nabla\widetilde{d_0^k}\|_{L^(\R^3)}^p
&=& \int_{B_k}|\nabla d_0|^p+\int_{B_{k+1}\setminus B_k}|\nabla\widetilde{d_0^k}|^p\\
&\lesssim& \int_{B_k}|\nabla d_0|^p
+\int_{\partial B_k}|\nabla d_0|^p\,dH^2+\int_{\partial B_k}|d_0-e_0|^p\,dH^2\\
&\lesssim& k^p\epsilon_0^p<+\infty, \  {\rm{for}}\ p=2, 3,
\end{eqnarray*}
\begin{eqnarray*}
\left\|\nabla \widetilde{d_0^k}\right\|_{L^3_1(\R^3)}^3
&\lesssim &
\left\|\nabla d_0\right\|_{L^3_1(\R^3)}^3
+\sup_{x\in\partial B_k}\int_{\partial B_k\cap B_1(x)}|d_0-e_0|^3\,dH^2\\
&&+\sup_{x\in\partial B_k}\int_{\partial B_k\cap B_1(x)}|\nabla d_0|^3\,dH^2\\
&\leq& C\epsilon_0^3,
\end{eqnarray*}
and for any $x_0\in B_{k+1}\setminus B_k$,
\begin{eqnarray*}
  {\rm{dist}}(\widetilde{d_0^k}(x_0),S^2) &\leq& \frac{1}{|B_1|}\int_{B_1(x_0)}
\left |\widetilde{d_0^k}(x_0) - d_0(y)\right|\\
&\lesssim& \int_{B_1(x_0)} \left|(|x_0|-k)e_0+(k+1-|x_0|)d_0(k\frac{x_0}{|x_0|})-d_0(y)\right|\\
&\lesssim& \int_{B_1(x_0)} \left|d_0(y)-e_0\right|+\left|d_0(y)-d_0(k\frac{x_0}{|x_0|})\right|\\
&\lesssim& \left(\int_{\R^3\setminus B_k}|d_0-e_0|^3\right)^\frac13
+\left\|\nabla d_0\right\|_{L^3_1(\R^3)}
\le 2\epsilon_0.
\end{eqnarray*}
This implies
$$\sup_{x_0\in\R^3}  {\rm{dist}}(\widetilde{d_0^k}(x_0),S^2) 
=\sup_{x_0\in B_{k+1}\setminus B_k}  {\rm{dist}}(\widetilde{d_0^k}(x_0),S^2) 
\le 2\epsilon_0$$
so that  if  $\epsilon_0>0$ is chosen sufficiently small then
$\widetilde{d_0^k}(x)$ remains close to $S^2$ uniformly for $x\in\R^3$.
Therefore we can project $\widetilde{d_0^k}$ onto $S^2$ to get
$\displaystyle \widehat{d_0^k}(x)=\frac{\widetilde{d_0^k}(x)}{|\widetilde{d_0^k}(x)|}$
for $x\in \R^3$. It is easy to see that $\displaystyle \widehat{d_0^k}:\R^3\to S^2$ satisfies:
\begin{equation}\label{approx1}
\widehat{d_0^k}=d_0 \ {\rm{in}}\ B_k, \ \widehat{d_0^k}=e_0 \ {\rm{in}}\ \R^3\setminus B_{k+1},\ 
 \left\|\nabla\widehat{d_0^k}\right\|_{L^3_1(\R^3)}\le C\epsilon_0, 
\ {\rm{and}}\ \ \int_{\R^3}\left|\nabla \widehat{d_0^k}\right|^p\le Ck^p\epsilon_0^p<+\infty\ (p =2, 3).
\end{equation}
For any $l, k\ge 1$, define $\displaystyle d_0^{k, l}(x)=\left(\eta_{\frac1l}*\widehat{d_0^k}\right)(x)$ for $x\in\R^3$. 
Then $d_0^{k,l}\in C^\infty(\R^3,\R^3)$ satisfies
\begin{equation}\label{approx2}
\left\|\nabla {d_0^{k,l}}\right\|_{L^3_1(\R^3)}\le C\epsilon_0, 
\ \ {\rm{and}}\ \ \int_{\R^3}\left|\nabla {d_0^{k,l}}\right|^p\le Ck^p\epsilon_0^3<+\infty, \ \forall\ l\ge 1, 
\ (p=2, 3),
\end{equation}
and by the modified Poincar\'e inequality it holds that
\begin{equation}\label{small_distance}
\sup_{x\in\R^3}{\rm{dist}}(d^{k,l}_0(x), S^2)\lesssim \left\|\nabla d_0^{k,l}\right\|_{L^3_1(\R^3)}
\le C\epsilon_0, \ \forall \ l\ge 1,
\end{equation}
and for any $k\ge 1$, 
$$\lim_{l\rightarrow\infty} 
\left(\|d^{k,l}_0-d_0\|_{L^p(B_{k-1})}+\|\nabla (d^{k,l}_0-d_0)\|_{L^(B_{k-1})}\right)=0, \ {\rm{for}}\ p=2, 3.$$
Therefore, by the Cauchy diagonal process we may conclude that, after taking possible subsequences, 
there exist $l(k)\rightarrow\infty$ as $k\rightarrow\infty$ such that
$$d_0^{k}(x)=\frac{d_0^{k,l(k)}}{\left|d_0^{k, l(k)}\right|}(x), \ \forall\ x\in\R^3,$$
satisfies the desired properties of approximation: $d_0^k\in C^\infty(\R^3, S^2)\cap \dot{W}^{1,p}(\R^3,S^2)$ ($p=2,3$), and
\begin{equation}\label{approx5}
\left\|\nabla d_0^k\right\|_{L^3_1(\R^3)}\le C_0\epsilon_0,
\end{equation}
and for any $0<R<+\infty$, 
\begin{equation}\label{approx6}
\lim_{k\rightarrow\infty}\left [\|d_0^k-d_0\|_{L^p(B_R)}+\|\nabla(d_0^k-d_0)\|_{L^p(B_R)}\right]=0, \ {\rm{for}}\ p=2,3.
\end{equation}

\medskip
Next we would like to obtain the desired approximation of $u_0$, whose proof is similar to \cite{basson} Theorem 1.4. For the completeness,
we outline the detail below.
 
\smallskip
\noindent{\it Step 2}. {\it Approximation of $u_0$}.  
Let $\mathbb P:L^2(\R^3)\to \mathbb PL^2(\R^3)$ denote the Leray projection operator. 
For $k\ge 1$, define 
\begin{equation*}
  \label{approx:seqs}
   \widetilde{ u_0^k}(x)=\mathbb P[\theta_k u_0](x), \ x\in\R^3.
\end{equation*}
Since $\theta_k u_0\in L^p(\R^3,\R^3)$ and $\mathbb P:L^p(\R^3)\to \mathbb PL^p(\R^3)$ 
is bounded, it follows that $\nabla\cdot \widetilde{u_0^k}=0$ in $\mathbb R^3$
and $\widetilde{u_0^k}\in L^p(\R^3)$ for $p=2,3$.  
Now we want to show 
\begin{equation}\label{l3_loc_bound}
\left\|\widetilde{u_0^k}\right\|_{L^3_{1}(\R^3)}\lesssim \Big\|u_0\Big\|_{L^3_{1}(\R^3)},
\end{equation}
and 
\begin{equation}\label{l3_loc_conv}
\widetilde{u_0^k}\rightarrow  u_0
\ {\rm{strongly\ in}}\ L^p_{\rm{loc}}(\R^3)\  {\rm{for}}\  p=2,3. 
\end{equation}
Since 
$$\widetilde{ u_0^k}(x)=(\theta_k u_0)(x)-\nabla\Delta^{-1}\nabla\cdot[\theta_k u_0](x),$$
and $\displaystyle\|\theta_k u_0\|_{L^3_1(\R^3)}\le \|u_0\|_{L^3_1(\R^3)}$, it suffices to show
$$\Big\|\nabla\Delta^{-1}\nabla\cdot[\theta_k u_0]\Big\|_{L^3_1(\R^3)}
\lesssim  \Big\|u_0\Big\|_{L^3_{1}(\R^3)}.$$
Set $\Phi=\nabla\Delta^{-1}\nabla\cdot[\theta_k u_0]$. Then we have  
$$\widetilde{u_0^k}(x)=\theta(\frac{x}{k})u_0(x)-\Phi(x), \  x\in\R^3.$$
It follows from  $\nabla\cdot u_0=0$ that we have
\begin{eqnarray*}
\Phi(x)&=&\nabla\Delta^{-1}\nabla\cdot[\theta_k u_0](x)=\nabla\Delta^{-1}[(\nabla \theta_k)\cdot u_0](x)\\
&=& \frac1{k}\int_{\R^3}K(x-y)\nabla\theta(\frac{y}{k})\cdot u_0(y)\\
&=&\frac1{k}\int_{B_k(x)}K(x-y)\nabla\theta(\frac{y}{k})\cdot u_0(y)
+\frac1{k}\int_{\R^3\setminus B_k(x)}K(x-y)\nabla\theta(\frac{y}{k})\cdot u_0(y)\\
&=&I(x)+II(x),
\end{eqnarray*}
where $\displaystyle K(x)=c_3\frac{x}{|x|^3}, \ c_3=\frac{1}{3|B_1|}$,  is the kernel of the operator $\nabla\Delta^{-1}$. We estimate $I$ and $II$ separately as follows.
It is easy to see that
$$\left\|I\right\|_{L^3_1(\R^3)}
\le \frac{1}{k}\left\|K\right\|_{L^1(B_{k})}\left\|\nabla\theta(\frac{\cdot}{k}) \cdot u_0\right\|_{L^3_1(\R^3)}
\le C\Big\|u_0\Big\|_{L^3_1(\R^3)},$$
while
$$\left|II(x)\right|\le \frac{C}{k^3}\int_{B_{2k}}|u_0(y)|\le C\Big\|u_0\Big\|_{L^3_1(\R^3)},$$
so that
$$\left\|II\right\|_{L^3_1(\R^3)}\le C \left\|u_0\right\|_{L^3_1(\R^3)}.$$
Combining these two estimates implies (\ref{l3_loc_bound}). 

For any fixed compact set $E\subset\R^3$ and $x\in E$, we write
\begin{eqnarray*}
\Phi(x)&=&\frac{c_3}{k}\int_{\R^3}\left(\frac{x-y}{|x-y|^3}+\frac{y}{|y|^3}\right)\nabla\theta(\frac{y}{k})\cdot u_0(y)
-\frac{c_3}{k}\int_{\R^3}\frac{y}{|y|^3}\nabla\theta(\frac{y}{k})\cdot u_0(y)\\
&=&III_k(x)+IV_k(u_0).
\end{eqnarray*}
Since $\nabla\theta(\frac{y}{k})$ has its support in $B_{2k}\setminus B_k$,  for $k$ sufficiently large we have that
$$\displaystyle\left |\frac{x-y}{|x-y|^3}+\frac{y}{|y|^3}\right|\le \frac{C_E}{k^3}, 
\ {\rm{ for }}\ x\in E \ {\rm{and}}\ y\in B_{2k}\setminus B_k,$$ 
and hence  it holds
$$
|III_k(x)|\leq \frac{C_E}{k^4}\int_{B_{2k}\setminus B_k}|u_0(y)|\leq \frac{C_E}{k}\Big\|u_0\Big\|_{L^3_1(\R^3)}\rightarrow 0
\ {\rm{as}}\ k\rightarrow\infty,
$$
while it is easy to bound $IV_k(u_0)$  by
$$|IV_k(u_0)|\lesssim \frac{1}{k^3}\int_{B_{2k}}|u_0(y)|\lesssim \Big\|u_0\Big\|_{L^3_1(\R^3)}.$$
Hence we may assume that there exists a constant vector $c\in \R^3$, with $\displaystyle |c|\le C\|u_0\|_{L^3_1(\R^3)}$, such that
$$\lim_{k\rightarrow\infty} IV_k(u_0)=c.$$
Now we define
$$\widehat{u_0^k}(x)=\widetilde{u_0^k}(x)+\frac32 \mathbb P[\theta(\frac{x}{k})c], \ x\in\R^3.$$
Then we have that $\widehat{u_0^k}\in L^p(\mathbb R^3)$ for $p=2,3$, and
$$\Big\|\widehat{u_0^k}\Big\|_{L^3_1(\R^3)}\le C\Big\|u_0\Big\|_{L^3_1(\R^3)}.$$
It is easy to check that for any $x\in E$, if $k\rightarrow\infty$ then 
$$\mathbb P[\theta(\frac{x}{k})c]=\theta(\frac{x}{k})c-\nabla\Delta^{-1}\nabla\cdot[\theta(\frac{x}{k}) c]
=\theta(\frac{x}{k})c+o(1)+\frac{c_3}{k}\int_{\R^3}\frac{y}{|y|^3}\nabla\theta(\frac{y}{k})\cdot c
\rightarrow \frac23 c.$$
Therefore, for any $x\in E$, if $k\rightarrow\infty$ then 
\begin{eqnarray*}
\widehat{u_0^k}(x)-u_0(x)&=&(\theta(\frac{x}{k})-1)u_0(x)-\Phi(x)+\frac32 \mathbb P[\theta(\frac{x}{k})c]\\
&=&(\theta(\frac{x}{k})-1)u_0(x)-III_k(x)-IV_k(u_0)+\frac32 \mathbb P[\theta(\frac{x}{k})c]\rightarrow 0.
\end{eqnarray*}
This clearly implies (\ref{l3_loc_conv}). The proof of Lemma \ref{approx} is not complete yet,
since $\widehat{u_0^k}\notin C^\infty(\R^3,\R^3)$. To overcome this, we mollify $\widehat{u_0^k}$ to get
$$u_0^{k,l}(x)=\left(\eta_{\frac{1}{l}}* u_k \right)(x), \ x\in\R^3, \forall \ l\ge 1.$$
Then it is straightforward to check that $u_0^{k,l}\in C^\infty(\R^3,\R^3)\cap L^p(\R^3,\R^3)$ for $p=2,3$,
$\nabla\cdot u_0^{k,l}=0$, 
$$\Big\|u_0^{k,l}\Big\|_{L^3_1(\R^3)}\le \Big\|\widehat{u_0^k}\Big\|_{L^3_1(\R^3)}\le C\Big\|u_0\Big\|_{L^3_1(\R^3)},$$
and for any $k\ge 1$,
$$u_0^{k,l}\rightarrow \widehat{u_0^k} \ {\rm{strongly\ in}}\ L^p_{\rm{loc}}(\R^3) \ {\rm{for}}\ p=2,3,
\ {\rm{as}}\ l\rightarrow \infty.$$
Thus, by the Cauchy diagonal process we may assume that there exist $l(k)\rightarrow\infty$ as $k\rightarrow\infty$ such that
$$u_0^k(x)=u_0^{k,l(k)}(x), \ x\in\R^3$$ satisfies the required properties of approximation of $u_0$:
$u_0^k\in C^\infty(\R^3,\R^3)\cap L^p(\R^3,\R^3)$ for $p=2,3$, $\nabla\cdot u_0^{k}=0$,
$$\Big\|u_0^{k}\Big\|_{L^3_1(\R^3)}\le C\Big\|u_0\Big\|_{L^3_1(\R^3)},$$
and
$$u_0^{k}\rightarrow {u_0} \ {\rm{strongly\ in}}\ L^p_{\rm{loc}}(\R^3) \ {\rm{for}}\ p=2,3,
\ {\rm{as}}\ k\rightarrow \infty.$$
This completes the proof of Lemma \ref{approx}.     \qed

\bigskip

\noindent{\bf Acknowledgements}. Both authors are partially supported by NSF grant 1000115. The second
author is also partially supported by NSFC grant 11128102. The paper is based on
the first author's PhD thesis \cite{hineman} at the University of Kentucky. The first author would like to thank
the Department of Mathematics for its support and Professor John Lewis for his interest in this work.

\bigskip


\begin{thebibliography}{99}

\bibitem{BKM}
J. T. Beale, T. Kato, A. Majda,
{\em Remarks on the breakdown of smooth solutions for the 3-D Euler equation}.
Commun. Math. Phys., {\bf 94}, 61-66 (1984).

\bibitem{basson} 
A. Basson, {\em Uniformly locally square integrable solutions for 2D Navier-Stokes equations}. Comm. Partial Diff. Eqns., to appear.

\bibitem{CKN}
L. Caffarelli, R. Kohn, L. Nirenberg, {\em Partial regularity of suitable weak solutions of Navier-Stokes equations.}
CPAM {\bf 35}, 771-831 (1982).

\bibitem{degenes} P. G. de Gennes, The Physics of Liquid Crystals. Oxford, 1974.

\bibitem{ding-lin}
S. J. Ding, J. Y. Lin, {\em
On the well-posedness for the heat flow of harmonic maps and the hydrodynamic flow of nematic liquid crystals in critical spaces. }
Math. Methods Appl. Sci. {\bf 35} no. 2, 158-173 (2012).

\bibitem{ericksen}
J. L. Ericksen, {\em Hydrostatic theory of liquid crystal.} Arch. Ration. Mech. Anal. {\bf 9}, 371-378 (1962).

\bibitem{ESV}
L. Escauriaza, G.A. Seregin, V. Sver\'ak,
{\em $L^{3,\infty}$-solutions of Navier–Stokes equations and backward uniqueness}. Uspekhi Mat. Nauk {\bf 58} (2) (350), 3-44 (2003).


\bibitem{hineman}

J. L. Hineman, {\em The hydrodynamic flow of nematic liquid crystals in $\mathbb R^3$}. PhD thesis, University of Kentucky
(2012).

\bibitem{hong}
M. C. Hong, {\em Global existence of solutions of the simplified Ericksen-Leslie system in dimension two.}
Calc. Var. Partial Differential Equations {\bf 40}, no. 1-2, 15-36 (2011).

\bibitem{HW}
T. Huang, C. Y. Wang, {\em Notes on the regularity of harmonic map systems.}
Proc. Amer. Math. Soc. {\bf 138}, no. 6, 2015-2023 (2010).

\bibitem{HW1}
T. Huang, C. Y. Wang, {\em
Blow up Criterion for Nematic Liquid Crystal Flows}. Comm. Partial Diff. Eqns. {\bf 37} no. 5, 875-884 (2012).

\bibitem{HX}
M. C. Hong, Z. P. Xin,
{\em Global existence of solutions of the Liquid Crystal flow for the Oseen-Frank model in $R^2$}.
Adv. Math., to appear.

\bibitem{koch-tataru}
H. Koch, D. Tataru, {\em Well-posedness for the Navier-Stokes equations.}
Adv. Math. {\bf 157}, no. 1, 22-35 (2001).

\bibitem{Leray}
J. Leray, {\em Sur le mouvement d'un liquide visqueux emplissant l'espace.}
Acta. Math. {\bf 63}, 183-248 (1934).

\bibitem{leslie}
F. M. Leslie, {\em Some constitutive equations for liquid crystals.} Arch. Ration. Mech. Anal. {\bf 28}, 265-283.

\bibitem{LR}
P.G. Lemari\'e-Rieusset, Recent Developments in the Navier-Stokes Problem, Chapman \& Hall/CRC Res. Notes Math., vol. {\bf 431}, Chapman \& Hall/CRC, Boca Raton, FL, 2002.

\bibitem{lin}
F. H. Lin, {\em
Nonlinear theory of defects in nematic liquid crystals; phase transition and flow phenomena.}
Comm. Pure Appl. Math. {\bf 42}, no. 6, 789-814 (1989).

\bibitem{lin1}
F. H. Lin, {\em  A new proof of the Caffarelli-Kohn-Nirenberg theorem}.
Comm. Pure Appl. Math. {\bf 51} (3)  241-257 (1998).

\bibitem{LL1}
F. H. Lin, C.  Liu, {\em Nonparabolic dissipative systems modeling the flow of liquid crystals}.
CPAM {\bf XLVIII}, 501-537 (1995).

\bibitem{LL2}
F. H. Lin, C. Liu, {\em Partial regularity of the dynamic system modeling the flow of liquid crystals.}
DCDS {\bf 2}(1), 1-22 (1998).

\bibitem{LLW}
F. H. Lin, J. Y. Lin, C. Y. Wang,
{\em Liquid crystal flows in two dimensions.} Arch. Ration. Mech. Anal. {\bf 197}, no. 1, 297-336 (2010).

\bibitem{LW}
F. H. Lin, C. Y. Wang, {\em On the uniqueness of heat flow of harmonic maps and hydrodynamic flow of nematic liquid crystals.}
Chin. Ann. Math. Ser. B {\bf 31} no. 6, 921-938 (2010).

\bibitem{li-wang}
 X. L. Li, D. H. Wang,
{\em Global solution to the incompressible flow of liquid crystals.}
J. Differential Equations {\bf 252}, no. 1, 745-767 (2012).

\bibitem{SU}
R. Schone, K. Uhlenbeck, {\em Boundary regularity and the Dirichlet problem for harmonic maps.}
J. Diff. Geom. {\bf 18},  253-268 (1983).

\bibitem{stein}
E. Stein, Singular integrals and differentiability properties of functions. Princeton Univ. Press, 1970.

\bibitem{temam}
R. Temam, Navier-Stokes equations. Studies in Mathematics and its Applications, Vol. 2, North Holland, Amsterdam, 1977.

\bibitem{wang}
C. Y. Wang, {\em
Well-posedness for the heat flow of harmonic maps and the liquid crystal flow with rough initial data.}
Arch. Ration. Mech. Anal. {\bf 200} no. 1, 1-19 (2011).

\bibitem{wen-ding}
H. Y. Wen, S. J. Ding, {\em Solutions of incompressible hydrodynamic flow of liquid crystals.}
Nonlinear Anal. Real World Appl. {\bf 12}, no. 3, 1510-1531 (2011).

\bibitem{XZ}
X. Xu, Z. F. Zhang, {\em Global regularity and uniqueness of weak solution for the 2-D liquid crystal flows.}
J. Differential Equations {\bf 252} no. 2, 1169-1181 (2012).


\end{thebibliography}
\end{document}